\def\tr{{\rm tr}}
\newtheorem{theorem}{Theorem}[section]
\newtheorem{lemma}[theorem]{Lemma}
\newtheorem{problem}[theorem]{Problem}
\begin{document}

\title{New proofs of stability theorems on spectral graph problems\thanks{This work was supported by  NSFC (Grant No. 11931002).  
 E-mail addresses: ytli0921@hnu.edu. cn (Y\v{o}ngt\={a}o L\v{i}),  
ypeng1@hnu.edu.cn (Yu\`{e}ji\`{a}n P\'{e}ng, corresponding author).}}

\author{
Yongtao Li, Yuejian Peng$^{\dag}$ \\[2ex]
{\small School of Mathematics, Hunan University} \\
{\small Changsha, Hunan, 410082, P.R. China }  }

\maketitle

\vspace{-0.5cm}

\begin{abstract}
Both the Simonovits stability theorem 
and the Nikiforov spectral stability theorem are   powerful tools 
for solving exact values of  Tur\'{a}n numbers   
in extremal graph theory. 
Recently, F\"{u}redi [J. Combin. Theory Ser. B 115 (2015)] provided
 a concise and contemporary proof  of the Simonovits stability theorem. 
In this note, we  present a unified treatment for some extremal graph problems, 
including 
short proofs of  Nikiforov's spectral stability theorem and 
the clique  stability theorem 
proved recently by  Ma and Qiu [European J. Combin. 84 (2020)]. 
Moreover, some  spectral extremal problems related to 
the $p$-spectral radius and signless Laplacian radius are also 
included. 
 \end{abstract}

{{\bf Key words:}   Tur\'{a}n number; 
Extremal graph theory; 
Spectral radius; 
Graph stability theorem. }

{{\bf 2010 Mathematics Subject Classification.} 05C50, 05C35.}

\section{Introduction}

Extremal graph theory is one of the significant branches of 
discrete mathematics and has experienced 
an impressive growth during the last few decades. 
It  deals usually with the problem of determining or 
estimating the maximum or minimum possible
size of a graph  which satisfies certain requirements, 
and further characterize the extremal graphs attaining the bound. 
Such problems are related to other areas including 
theoretical computer science, 
discrete geometry, information theory and number theory.

We say that a graph $G$ is $F$-free if it does not contain 
  an isomorphic copy of $F$ as a subgraph. 
For given $n$, 
 the {\em Tur\'an number} of a graph $F$, denoted by $\mathrm{ex}(n,F)$, 
  is the maximum number of edges 
  in an $n$-vertex $F$-free graph.   
  An $F$-free graph on $n$ vertices with $\mathrm{ex}(n, F)$ edges is called an {\em extremal graph} for $F$. 
  
 Let $K_{r+1}$ be the complete graph on $r+1$ vertices. 
    In 1941, Tur\'{a}n \cite{Turan41} solved the  natural question of determining
 $\mathrm{ex}(n,K_{r+1})$ for $r\ge 2$. 
 Let $T_r(n)$ denote the complete $r$-partite graph on $n$ vertices where 
 its part sizes are as equal as possible. 
Tur\'{a}n \cite{Turan41}   extended a result of Mantel  
and  obtained that if $G$ is a $K_{r+1}$-free graph on $n$ vertices, 
then $e(G)\le e(T_r(n))$, equality holds if and only if $G=T_r(n)$.   
There are many extensions and generalizations on Tur\'{a}n's result. 
 In the language of extremal number, the Tur\'{a}n theorem can be stated as 
\begin{equation} \label{eqTuran}
\mathrm{ex}(n,K_{r+1}) = e(T_r(n)). 
\end{equation}  
Moreover, we can easily see that $(1-\frac{1}{r}) \frac{n^2}{2} - \frac{r}{8}\le e(T_r(n)) \le (1-\frac{1}{r}) \frac{n^2}{2} $. 
  It is  a cornerstone of extremal graph theory 
  to 
  understand $\mathrm{ex}(n, F)$  for various graphs $F$; 
  see \cite{FS13, Keevash11, Sim13} for surveys. 
The problem of determining $\mathrm{ex}(n, F)$ is usually called the 
Tur\'{a}n-type extremal problem. 
 The most celebrated extension of Tur\'{a}n's theorem always attributes to a result of 
 Erd\H{o}s, Stone and Simonovits, 
 although it was proved first in \cite{ES66}, but indeed easily follows from a result of Erd\H{o}s and Stone \cite{ES46}.

\begin{theorem}[Erd\H{o}s--Stone--Simonovits, 1946/1966]  \label{thm11}
If $F$ is a graph with chromatic number 
$\chi (F)=r+1$, then 
\[  \mathrm{ex}(n,F) =e(T_r(n)) + o(n^2)= \left(  1-\frac{1}{r} + o(1)\right) \frac{n^2}{2}. \]  
\end{theorem}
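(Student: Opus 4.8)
The plan is to separate the statement into the easy lower bound and the substantial upper bound, and to reduce the latter to the Erd\H{o}s--Stone theorem, i.e.\ to the estimate $\mathrm{ex}(n,K_{r+1}(t))=\bigl(1-\tfrac1r+o(1)\bigr)\tfrac{n^2}{2}$, where $K_{r+1}(t)$ denotes the complete $(r+1)$-partite graph with $t$ vertices in each part. The lower bound is immediate: $T_r(n)$ is $r$-colourable, hence has no subgraph of chromatic number $r+1$, so since $\chi(F)=r+1$ the graph $T_r(n)$ is $F$-free and $\mathrm{ex}(n,F)\ge e(T_r(n))=\bigl(1-\tfrac1r+o(1)\bigr)\tfrac{n^2}{2}$. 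For the upper bound, put $t=|V(F)|$; a proper $(r+1)$-colouring of $F$ embeds each colour class into one part of $K_{r+1}(t)$, so $F\subseteq K_{r+1}(t)$, and it suffices to show that for all fixed $r,t$ and every $\varepsilon>0$ any $n$-vertex graph with more than $e(T_r(n))+\varepsilon n^2$ edges contains $K_{r+1}(t)$ once $n$ is large.

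First I would pass to a minimum-degree condition: repeatedly delete a vertex whose current degree is below $\bigl(1-\tfrac1r+\tfrac{\varepsilon}{2}\bigr)$ times the current order. A direct count of the removed edges shows this destroys at most $\bigl(1-\tfrac1r+\tfrac{\varepsilon}{2}\bigr)\tfrac{n^2}{2}+O(n)$ of them, hence stops at a subgraph on $\Omega(n)$ vertices whose minimum degree is at least $\bigl(1-\tfrac1r+\tfrac{\varepsilon}{2}\bigr)$ times its order; after relabelling, it remains to show that every $n$-vertex graph $G$ with $\delta(G)\ge\bigl(1-\tfrac1r+\tfrac{\varepsilon}{2}\bigr)n$ contains $K_{r+1}(t)$ for $n$ large, which I would prove by induction on $r$. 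For $r=1$ we have $\delta(G)\ge\tfrac{\varepsilon}{2}n$ and seek $K_{t,t}$: by convexity $\sum_{v}\binom{d(v)}{t}\ge\bigl(1-o(1)\bigr)\tfrac{(\varepsilon/2)^{t}}{t!}\,n^{t+1}$, while the absence of $K_{t,t}$ would force $\sum_{v}\binom{d(v)}{t}\le(t-1)\binom{n}{t}=O(n^{t})$ (each $t$-set lies in fewer than $t$ neighbourhoods), a contradiction for $n$ large --- this is the K\H{o}v\'{a}ri--S\'{o}s--Tur\'{a}n argument.

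For the inductive step, since $1-\tfrac1r+\tfrac{\varepsilon}{2}>1-\tfrac1{r-1}+\tfrac{\varepsilon}{2}$, the induction hypothesis gives inside $G$ a copy of $K_r(m)$ for any prescribed $m$, once $n$ is large; let $U_1,\dots,U_r$ be its parts and $W=V(G)\setminus(U_1\cup\cdots\cup U_r)$. Bounding the non-edges between $W$ and each $U_i$ by the minimum-degree hypothesis shows that, after first taking $m$ and then $n$ large enough, all but at most $\bigl(1-\tfrac{r\varepsilon}{4}\bigr)n$ vertices of $W$ have at least $t$ neighbours in \emph{every} $U_i$, leaving a set $W^{\ast}$ of more than $(t-1)\binom{m}{t}^{r}$ such vertices. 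Assigning to each $w\in W^{\ast}$ a profile consisting of one $t$-subset of $N(w)\cap U_i$ for every $i$, the pigeonhole principle produces $t$ vertices $w_1,\dots,w_t$ sharing a common profile $(S_1,\dots,S_r)$; then $\{w_1,\dots,w_t\}\cup S_1\cup\cdots\cup S_r$ spans $K_{r+1}(t)$, the $U_i$--$U_j$ edges being supplied by the embedded $K_r(m)$. Combined with the lower bound, this yields $\mathrm{ex}(n,F)=e(T_r(n))+o(n^2)=\bigl(1-\tfrac1r+o(1)\bigr)\tfrac{n^2}{2}$.

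The step I expect to be the main obstacle is the bookkeeping inside the inductive step: one has to fix a hierarchy of parameters $1/\varepsilon\ll t\ll m\ll n$ so that the vertices of $W$ with fewer than $t$ neighbours in some $U_i$ number strictly fewer than $|W|-(t-1)\binom{m}{t}^{r}$, which comes down to tracking how the $\varepsilon$-surplus in the degree condition survives the passage from $r$ to $r-1$. The remaining ingredients --- the minimum-degree reduction, the convexity bound, and the concluding pigeonhole --- are routine.
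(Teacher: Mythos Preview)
Your proposal is correct: it is precisely the classical Erd\H{o}s--Stone route (minimum-degree reduction, K\H{o}v\'{a}ri--S\'{o}s--Tur\'{a}n for the base case, and the pigeonhole extension step for the induction), and the parameter hierarchy you flag as the delicate point can indeed be carried out as you outline.

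The paper, however, does not reprove Theorem~\ref{thm11}; it quotes it as a classical background result and merely remarks that it ``easily follows from a result of Erd\H{o}s and Stone''. What the paper \emph{does} supply is a different framework, used explicitly for the companion Theorems~\ref{thm13} and~\ref{thm15}, which would give a one-line proof of Theorem~\ref{thm11} as well: by Lemma~\ref{lemEFR} (Erd\H{o}s--Frankl--R\"{o}dl), any $F$-free $G$ can be made $K_{r+1}$-free by deleting $o(n^2)$ edges, and then Tur\'{a}n's theorem~(\ref{eqTuran}) gives $e(G)\le e(T_r(n))+o(n^2)$. So your argument is genuinely different in spirit from the paper's machinery: yours is elementary and self-contained, building $K_{r+1}(t)$ by hand, whereas the paper's route is much shorter but rests on the Szemer\'{e}di regularity lemma hidden inside Lemma~\ref{lemEFR}. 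Each has its merits --- your approach gives effective bounds and avoids regularity entirely, while the paper's approach fits its unified template of ``remove $o(n^2)$ edges to reduce to the $K_{r+1}$-free case, then apply the exact extremal result''.
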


The Tur\'{a}n theorem implies that 
every $n$-vertex graph with more than $(1-\frac{1}{r})\frac{n^2}{2} $ 
edges contains a copy of $K_{r+1}$.  
The Erd\H{o}s--Stone--Simonovits theorem states that 
for any integer $t$ and  $\varepsilon >0$, 
then for sufficiently large $n$, 
every $n$-vertex graph with at least 
$(1-\frac{1}{r})\frac{n^2}{2}+\varepsilon n^2$ edges not only 
contains a copy of $K_{r+1}$, but also contains a copy of $K_{r+1}(t)$, 
the complete $(r+1)$-partite graph with $t$ vertices in each part.  

In 1966, 
Erd\H{o}s \cite{Erd1966Sta1,Erd1966Sta2} and Simonovits \cite{Sim1966} proved 
a stronger structural  theorem of Theorem \ref{thm11} and 
discovered that 
this extremal problem exhibits a certain stability phenomenon. 
Let $G_1$ and $G_2$ be two graphs which are defined on the same vertex set. 
The {\it edit-distance} between $G_1$ and $G_2$, 
denoted by $d(G_1,G_2)$, is the minimum integer $k$ 
such that $G_1$ can be obtained from $G_2$ 
by adding or deleting a total number of $k$ edges. 
The following structural stability theorem was  proved 
by Erd\H{o}s \cite{Erd1966Sta1,Erd1966Sta2} and Simonovits in \cite{Sim1966}. 
This result  bounds the edit-distance 
between $G$ and $T_r(n)$ when $G$ is $F$-free and $e(G)$  
is close  to $(1-o(1))\mathrm{ex}(n,F)$.

\begin{theorem}[Erd\H{o}s--Simonovits, 1966] \label{thm12}
Let $F$ be a graph with $\chi (F)=r+1\ge 3$. 
For every $\varepsilon >0$, 
there exist $\delta >0$ and $n_0$ such that 
if $G$ is a graph on $n\ge n_0$ vertices, 
 and $G$ is $F$-free such that 
$e(G)\ge (1- \frac{1}{r} - \delta) \frac{n^2}{2}$, 
 then  the edit distance $d(G,T_r(n)) \le \varepsilon n^2$. 
\end{theorem}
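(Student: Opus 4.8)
The plan is to follow the modern, Füredi-style route: reduce the general $F$-free problem to the case $F=K_{r+1}$, establish a tight stability statement for $K_{r+1}$-free graphs, and then rebalance the resulting vertex partition. Fix $\varepsilon>0$, and let $G$ be an $n$-vertex $F$-free graph with $e(G)\ge (1-\frac1r-\delta)\frac{n^2}{2}$; the small constants $\delta,\eta>0$ and the threshold $n_0$ will be pinned down at the very end. Since $e(T_r(n))\ge (1-\frac1r)\frac{n^2}{2}-\frac r8$, the \emph{deficiency} $t:=e(T_r(n))-e(G)$ is at most $\frac{\delta}{2}n^2+\frac r8$, and every error term below will be controlled by $t$ (together with $n$-independent lower-order terms).

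First I would pass to the clique problem. As $\chi(F)=r+1$, the blow-up $K_{r+1}(s)$ with $s=|V(F)|$ parts of size $s$ contains $F$, so $G$ is $K_{r+1}(s)$-free; by the supersaturation form of Theorem~\ref{thm11}, a $K_{r+1}(s)$-free graph on $n$ vertices has only $o(n^{r+1})$ copies of $K_{r+1}$, so the $K_{r+1}$-removal lemma yields a spanning $K_{r+1}$-free subgraph $G_0\subseteq G$ with $e(G)-e(G_0)\le \eta n^2$ once $n\ge n_0$. Then $G_0$ is $K_{r+1}$-free with $t_0:=e(T_r(n))-e(G_0)\le t+\eta n^2$.

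The technical core is the following \emph{clique-stability lemma}: every $n$-vertex $K_{r+1}$-free graph $H$ admits a partition of $V(H)$ into $r$ parts whose number of edges inside the parts is at most $e(T_r(n))-e(H)$. I would prove this by induction on $r$: for $r=1$, $H$ is edgeless; for the step, pick a vertex $v$ of maximum degree $\Delta$, observe that $H[N(v)]$ is $K_r$-free, apply induction to get an $(r-1)$-partition of $N(v)$ with at most $e(T_{r-1}(\Delta))-e(H[N(v)])$ internal edges, and adjoin $V(H)\setminus N(v)$ as the $r$-th part. Since every vertex outside $N(v)$ has degree at most $\Delta$, we get $2e\bigl(H[V\setminus N(v)]\bigr)+e\bigl(N(v),V\setminus N(v)\bigr)\le (n-\Delta)\Delta$, so the total number of internal edges is at most $e(T_{r-1}(\Delta))+(n-\Delta)\Delta-e(H)$; the induction closes once one checks $e(T_{r-1}(\Delta))+(n-\Delta)\Delta\le e(T_r(n))$ for every integer $\Delta$. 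Applying the lemma to $G_0$ gives a partition $V_1\cup\cdots\cup V_r$ of $V(G_0)$ with at most $t_0$ edges inside the parts, so the between-parts subgraph of $G_0$ has at least $e(G_0)-t_0=e(T_r(n))-2t_0$ edges.

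Finally I would compare $G$ with the balanced complete $r$-partite graph on $V_1,\dots,V_r$. The edges of $G$ inside parts number at most $t_0+\eta n^2$ (the internal edges of $G_0$ plus the at most $\eta n^2$ edges of $G$ missing from $G_0$), and since the complete $r$-partite graph $K(V_1,\dots,V_r)$ has at most $e(T_r(n))$ edges (Turán) while containing every between-parts edge of $G_0$, the non-edges of $G$ between parts number at most $2t_0$; hence $d\bigl(G,K(V_1,\dots,V_r)\bigr)=O(t_0+\eta n^2)$. Moreover the same edge count forces $\sum_i\binom{|V_i|}{2}\le\sum_i\binom{t_i}{2}+2t_0$, where $t_1,\dots,t_r$ are the Turán part-sizes (within $1$ of $n/r$), whence $\sum_i(|V_i|-n/r)^2=O(t_0+1)$ and $\sum_i\lvert |V_i|-n/r\rvert=O(\sqrt{r\,t_0}+1)$; relocating vertices to reach the Turán sizes therefore alters at most $O\bigl(n\sqrt{r\,t_0}\,\bigr)+O(n)$ edges. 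Adding up, $d(G,T_r(n))=O(t_0+\eta n^2)+O\bigl(n\sqrt{r\,t_0}\,\bigr)+O(n)$, which with $t_0\le\frac{\delta}{2}n^2+\eta n^2+O(1)$ becomes $O\bigl(\sqrt{r(\delta+\eta)}\,n^2\bigr)+O(n)$; choosing $\eta$ then $\delta$ small enough and $n_0$ large enough makes this at most $\varepsilon n^2$. I expect the main obstacle to be the clique-stability lemma — concretely, verifying $e(T_{r-1}(\Delta))+(n-\Delta)\Delta\le e(T_r(n))$ uniformly in $\Delta$, since the clean smooth estimate only gives the bound $(1-\frac1r)\frac{n^2}{2}$, which is just slightly too weak, so one must exploit the exact arithmetic of the Turán numbers.
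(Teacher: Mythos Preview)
The paper does not supply its own proof of Theorem~\ref{thm12}; it records the Erd\H{o}s--Simonovits stability theorem as a classical result and then uses it as a black box in the alternative proofs of Theorems~\ref{thm14} and~\ref{thm16}. The only proof the paper points to is F\"{u}redi's~\cite{Fur2015}, and your proposal is precisely that argument: pass from $F$-free to $K_{r+1}$-free via the Erd\H{o}s--Frankl--R\"{o}dl removal step (the paper's Lemma~\ref{lemEFR}), establish the inductive clique-stability lemma bounding the number of internal edges of an optimal $r$-partition by the edge deficiency, and then rebalance. So your route coincides with the modern proof the paper alludes to rather than with anything new the paper does.

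Your anticipated obstacle is not actually an obstacle. The exact inequality
\[
e(T_{r-1}(\Delta))+(n-\Delta)\Delta \le e(T_r(n))
\]
holds for every integer $0\le \Delta\le n$ with no delicate arithmetic required: the left-hand side is the number of edges of the complete $r$-partite graph on $n$ vertices whose first $r-1$ parts form a balanced partition of a $\Delta$-set and whose last part has size $n-\Delta$, and among all complete $r$-partite graphs on $n$ vertices the Tur\'{a}n graph $T_r(n)$ maximises the edge count. That one-line convexity observation closes your induction immediately, so the lemma goes through exactly as you outlined, and the rest of your bookkeeping (bounding internal edges of $G$ by $t_0+\eta n^2$, bounding cross non-edges by $2t_0$, and controlling $\sum_i\bigl||V_i|-n/r\bigr|$ via $\sum_i\binom{|V_i|}{2}\le\sum_i\binom{t_i}{2}+2t_0$) is correct.
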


Roughly speaking,  
if $G$ is an $n$-vertex 
 $K_{r+1}$-free graph for which $e(G)$ is close to $e(T_r(n))$, 
then the structure of $G$ must resemble the Tur\'{a}n graph  
in an appropriate sense.  
Over the past twenty years, the  stability theorem has attracted wide public concern and 
plays an important role in the 
development of extremal  graph theory.

\subsection{Spectral extremal graph problems}

Let $G$ be a simple graph on $n$ vertices. 
The \emph{adjacency matrix} of $G$ is defined as 
$A(G)=[a_{ij}]_{n \times n}$ where $a_{ij}=1$ if two vertices $v_i$ and $v_j$ are adjacent in $G$, and $a_{ij}=0$ otherwise.   
We say that $G$ has eigenvalues $\lambda_1 ,\ldots ,\lambda_n$ if these values are eigenvalues of 
the adjacency matrix $A(G)$. 
Let $\lambda (G)$ be the maximum  value in absolute 
 among the eigenvalues of $G$, which is 
 known as the {\it spectral radius} of graph $G$, 
 that is, 
 \[  \lambda (G) = \max \{|\lambda | : \text{$\lambda$ 
 is an eigenvalue of $G$}\}. \]
By the Perron--Frobenius theorem, 
the spectral radius of a graph $G$ is actually 
the largest eigenvalue of $G$ 
since the adjacency matrix $A(G)$ is nonnegative.  
The spectral radius of a graph sometimes can give some information  
about the structure of graphs.  

In  the classic extremal graph problem,  
we usually study the maximum or minimum 
number of edges that the extremal graphs can have. 
Correspondingly, 
we can study the extremal spectral problem.  
We define $\mathrm{ex}_{\lambda}(n,F)$ 
to be the largest eigenvalue of the adjacency matrix 
in an $F$-free $n$-vertex  graph, that is, 
\[ \mathrm{ex}_{\lambda}(n,F):=\max \bigl\{ \lambda(G): |G|=n~\text{and}~F\nsubseteq G \bigr\}. \]

In 2007, Nikiforov \cite{Niki2007laa2} showed the 
spectral version of the Tur\'{a}n theorem. 
\begin{equation} \label{eqeq2}
\mathrm{ex}_{\lambda}(n,K_{r+1}) 
=\lambda (T_r(n)).
\end{equation}

By calculation, we can obtain that $(1-\frac{1}{r})n - \frac{r}{4n} \le \lambda (T_r(n))\le 
(1-\frac{1}{r})n$. 
It should be mentioned that 
the spectral version of the  Tur\'{a}n theorem was 
early studied independently by Guiduli in his PH.D. dissertation \cite[pp. 58--61]{Gui1996} 
dating back to 1996 under the guidance of L\'{a}szl\'{o} Babai. 
We remark here that the proof of Guiduli  for the spectral 
Tur\'{a}n theorem  is completely different from that of Nikiforov \cite{Niki2007laa2}. 
The main idea in his proof   \cite{Gui1996} reduces the problem of bounding the largest 
spectral radius among $K_{r+1}$-free graphs 
to complete $r$-partite graphs, then one can show further that 
the balanced complete $r$-partite graph attains the maximum 
value of the 
spectral radius. 
The proof of Nikiforov is more algebraic and relies on a profound theorem from \cite{Niki2002cpc} as well as  
an old theorem from \cite{Zykov1949} and \cite{Erd1962}.

\medskip 

A natural question we may ask is the following: 
what is the relation between the  spectral Tur\'{a}n theorem and the edge Tur\'{a}n theorem? 
Does the spectral  bound imply the 
edge bound of Tur\'{a}n's theorem?  
This question was also proposed in \cite{Niki2009jctb}. 
The answer is positive.  
It is well-known that 
${e(G)} \le \frac{n}{2}\lambda (G)$, 
with equality if and only if  $G$ is  regular. 
Although the Tur\'{a}n graph $T_r(n)$
is sometimes not regular, but it is nearly regular. 
Upon calculation, we can verify that 
$   {e(T_r(n))} = \left\lfloor \frac{n}{2}\lambda (T_r(n)) \right\rfloor$.   
With the help of this observation, 
the spectral Tur\'{a}n theorem implies that  
\begin{equation} \label{eqeqq3}
 e(G) \le 
\left\lfloor \frac{n}{2} \lambda (G) \right\rfloor 
\le \left\lfloor \frac{n}{2} \lambda (T_r(n)) \right\rfloor =e(T_r(n)).
\end{equation}  
Thus the spectral Tur\'{a}n theorem implies the 
classical Tur\'{a}n theorem. 

\medskip

In 2009, Nikiforov  \cite{Niki2009cpc} 
proved the following theorem, 
 which determined the asymptotic maximum spectral radius of 
 $F$-free graphs for arbitrary graph $F$. 
Theorem \ref{thm13} is a spectral analogue of the Erd\H{o}s--Stone--Simonovits Theorem \ref{thm11}. 

\begin{theorem}[Nikiforov, 2009] \label{thm13}
If $F$ is a graph with chromatic number $\chi (F)=r+1$, 
then 
\[  \mathrm{ex}_{\lambda} (n,F) =\lambda (T_r(n)) + o(n)=
\left( 1-\frac{1}{r} + o(1)\right) n . \] 
\end{theorem}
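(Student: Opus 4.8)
The plan is to derive Theorem~\ref{thm13} from the edge version (Theorem~\ref{thm11}) together with the elementary relation between the spectral radius and the average degree. First I would establish the lower bound $\mathrm{ex}_{\lambda}(n,F) \ge \lambda(T_r(n))$: since $\chi(F)=r+1$, the Tur\'an graph $T_r(n)$ is $r$-chromatic and hence $F$-free, so it is an admissible competitor in the maximum defining $\mathrm{ex}_{\lambda}(n,F)$; combined with the computation $\lambda(T_r(n)) \ge (1-\frac1r)n - \frac{r}{4n}$ already recorded in the excerpt, this gives the matching lower order term. This direction is the easy half and needs only a sentence or two.

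For the upper bound the idea is as follows. Let $G$ be an $F$-free graph on $n$ vertices with $\lambda(G) = \mathrm{ex}_{\lambda}(n,F)$, and let $\mathbf{x}$ be a nonnegative Perron eigenvector of $A(G)$. The standard inequality $e(G) \ge \tfrac12 \sum_v d(v) \cdot$ (something) is not quite enough on its own, so instead I would pass to a subgraph with large minimum degree. Concretely, iteratively delete any vertex whose degree is less than, say, $\lambda(G) - \varepsilon' n$ for a suitable small $\varepsilon' = \varepsilon'(\varepsilon)$; because $\lambda$ of a subgraph does not exceed $\lambda(G)$ and because deleting a low-degree vertex decreases $\sum_{uv \in E} x_u x_v$ by at most its weighted degree, one shows this process must terminate while a positive proportion of the vertices survive — in fact it cannot delete all vertices, since otherwise summing the deleted weighted degrees against $\mathbf{x}$ would contradict $\lambda(G)\|\mathbf{x}\|^2 = 2\sum_{uv\in E} x_u x_v$. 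The surviving graph $G'$ on $n' \le n$ vertices is still $F$-free and has $\delta(G') \ge \lambda(G) - \varepsilon' n$, hence $e(G') \ge \tfrac12 n'(\lambda(G) - \varepsilon' n)$. If $\lambda(G)$ were as large as $(1 - \frac1r + 3\varepsilon)n$, then $e(G')$ would exceed $(1 - \frac1r + \varepsilon)\frac{(n')^2}{2}$ once $n'$ is a large enough fraction of $n$ (which itself follows from the degree bound: $n' > \delta(G') \ge (1-\frac1r)n$ roughly), and then the Erd\H{o}s--Stone--Simonovits Theorem~\ref{thm11} applied with parameter $\varepsilon$ forces a copy of $F$ in $G' \subseteq G$, a contradiction. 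Choosing the constants carefully yields $\lambda(G) \le (1 - \frac1r + o(1))n$.

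An alternative, slightly slicker route avoids the vertex-deletion bookkeeping by using the inequality $\lambda(G)^2 \le \max_{u} \sum_{v \sim u} d(v)$ (or the bound $\lambda(G) \le \sqrt{2e(G) - n + 1}$ combined with a local argument), but the cleanest presentation is the minimum-degree reduction above, which is exactly the spectral analogue of how one often deduces Erd\H{o}s--Stone from its minimum-degree form. The main obstacle I anticipate is making the reduction step quantitatively honest: one must verify that the weighted-degree accounting against the Perron vector genuinely prevents the deletion process from exhausting too many vertices, and that the surviving vertex count $n'$ is large enough (linear in $n$) for Theorem~\ref{thm11} to bite with the right $\varepsilon$. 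This is the only place where the Perron--Frobenius structure is essential — everywhere else the argument is a soft translation between $\lambda$, average degree, and minimum degree — so I would spend the bulk of the proof there and treat the parameter chase $(\varepsilon \to \varepsilon' \to \delta \to n_0)$ as routine at the end.
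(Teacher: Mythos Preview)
Your deletion-to-large-minimum-degree strategy is different from the paper's proof, but as written it has a real gap at the termination step. You delete vertices of low \emph{unweighted} degree (below the fixed threshold $\lambda(G)-\varepsilon' n$), yet the Perron identity $\lambda(G)=2\sum_{uv\in E}x_ux_v$ only controls \emph{weighted} contributions $x_v\sum_{u\sim v}x_u$, and low unweighted degree does not make these small. Concretely: if all vertices are deleted in order $v_1,\ldots,v_n$, then $\lambda/2=\sum_i x_{v_i}W_i$ with $W_i=\sum_{j>i,\,v_j\sim v_i}x_{v_j}$; the bound $W_i\le d_{G_{i-1}}(v_i)\,x_{\max}<(\lambda-\varepsilon'n)\,x_{\max}$ gives only $\lambda/2<(\lambda-\varepsilon'n)\,x_{\max}\lVert x\rVert_1$, which is trivially satisfied for large $n$, while the alternative bound $x_{v_i}W_i\le x_{v_i}\sum_{u\sim_G v_i}x_u=\lambda x_{v_i}^2$ just recovers $\lambda/2\le\lambda$. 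Nothing here prevents the process from emptying $G$. A correct deletion lemma in this spirit does exist --- Theorem~5 of \cite{Niki2008}, which the paper itself invokes later in the proof of Theorem~\ref{thmcri} --- but it uses a threshold proportional to the \emph{current} vertex count and tracks how $\lambda(H)/|V(H)|$ evolves under deletion; its proof is not the one-line Perron bookkeeping you sketch.

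For comparison, the paper bypasses minimum-degree reduction entirely. It applies the Erd\H{o}s--Frankl--R\"odl lemma (Lemma~\ref{lemEFR}) to remove $o(n^2)$ edges from the $F$-free graph $G$ and obtain a $K_{r+1}$-free subgraph $G^*$, then uses $\lambda(G)\le\lambda(G^*)+\lambda(G\setminus G^*)$ together with $\lambda(G\setminus G^*)\le\sqrt{2e(G\setminus G^*)}=o(n)$ and the exact spectral Tur\'an bound $\lambda(G^*)\le\lambda(T_r(n))$ from (\ref{eqeq2}). This route trades your degree bookkeeping for the removal lemma plus Nikiforov's clique inequality, and gives the sharper conclusion $\lambda(G)\le \lambda(T_r(n))+o(n)$ directly.
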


In the same year, Nikiforov \cite{Niki2009jgt} proved the corresponding spectral 
analogue of the Erd\H{o}s--Simonovits stability theorem.

\begin{theorem}[Nikiforov, 2009] \label{thm14}
Let $F$ be a graph with $\chi (F)=r+1\ge 3$. For every $\varepsilon >0$,  
there exist $\delta >0$ and $n_0$ such that 
if $G$ is an $F$-free graph on $n\ge n_0$ vertices  
 and  
$\lambda(G)\ge (1- \frac{1}{r} - \delta) n$, 
 then  the edit distance $d(G,T_r(n)) \le \varepsilon n^2$. 
\end{theorem}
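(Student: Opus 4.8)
The plan is to deduce the spectral stability theorem from the edge-version stability theorem (Theorem \ref{thm12}) together with the asymptotic spectral result (Theorem \ref{thm13}), by showing that a spectral-radius hypothesis forces the graph to have nearly extremal edge count \emph{after a small modification}. The subtlety is that $\lambda(G)$ large does not immediately give $e(G)$ large: a graph with one high-degree vertex can have large spectral radius but few edges. So the first step is a cleaning argument. Given $\varepsilon>0$, fix auxiliary parameters $\varepsilon_1 \ll \varepsilon$ and let $G$ be $F$-free on $n$ vertices with $\lambda(G) \ge (1-\tfrac1r-\delta)n$ for a $\delta$ to be chosen. Let $\bm{x}$ be the Perron eigenvector of $A(G)$, normalized so that $\max_i x_i = 1$, and let $u$ be a vertex achieving $x_u=1$. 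From $\lambda(G) x_u = \sum_{v\sim u} x_v$ we get $\sum_{v \sim u} x_v = \lambda(G) \ge (1-\tfrac1r-\delta)n$, so $u$ has large degree; more importantly, a standard averaging shows that the set $W$ of vertices with $x_v \ge \varepsilon_1$ has size at least (roughly) $(1-\tfrac1r-\delta)n$, and the entries of $\bm x$ outside $W$ contribute negligibly to quadratic forms.

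Next I would pass to the subgraph $H := G[W]$. The key estimate is that $H$ still has almost all the ``spectral mass'': writing $\lambda(G) = \bm{x}^{\top} A(G) \bm{x} = \sum_{vw \in E(G)} 2 x_v x_w$ and discarding edges meeting $V(G)\setminus W$ (each such term is at most $\varepsilon_1$, and there are $O(n^2)$ of them), one obtains $\lambda(H) \ge \bm{x}|_W^{\top} A(H)\, \bm{x}|_W / \|\bm x|_W\|^2 \ge (1-\tfrac1r-\delta)n - O(\varepsilon_1 n)$. Since $H$ is $F$-free and $|W| \le n$, Theorem \ref{thm13} applied on $|W|$ vertices forces $|W| \ge (1-o(1))n$; combined with a quantitative form of the same theorem (or the Motzkin--Straus / Zykov-type bound underlying the edge Tur\'an theorem), the near-maximality of $\lambda(H)$ forces $e(H) \ge (1-\tfrac1r - \varepsilon_1)\tfrac{|W|^2}{2}$. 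Here one uses the elementary chain $e(H) \ge \tfrac{|W|}{2}\lambda(H) - (\text{error})$ is \emph{not} available directly (that inequality goes the wrong way), so instead I invoke: among $F$-free graphs, $\lambda$ close to $\lambda(T_r)$ implies the number of edges is close to $e(T_r)$, which itself follows by a short argument combining Theorem \ref{thm13} with the bound $\lambda \le \sqrt{2e(1-1/\omega)}$ or, more cleanly, by applying Theorem \ref{thm13} to conclude $e(H) = (1-\tfrac1r+o(1))\tfrac{|W|^2}{2}$ cannot fail, since otherwise deleting few edges makes $H$ have chromatic number $\le r$ while keeping $\lambda$ large — contradicting $\lambda(T_{r-1}(m)) = (1-\tfrac{1}{r-1})m$.

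Once $e(H) \ge (1-\tfrac1r-\varepsilon_1)\tfrac{|W|^2}{2}$ with $|W|=(1-o(1))n$, apply the Erd\H{o}s--Simonovits stability theorem (Theorem \ref{thm12}) to $H$: choosing $\delta$ and $\varepsilon_1$ small enough relative to $\varepsilon$ and $n$ large, we get $d(H, T_r(|W|)) \le \tfrac{\varepsilon}{2}|W|^2 \le \tfrac{\varepsilon}{2}n^2$. Finally, account for the vertices of $V(G)\setminus W$: there are $o(n)$ of them, so rebuilding $G$ from $T_r(n)$ costs at most $d(H,T_r(|W|)) + |V(G)\setminus W|\cdot n + O(1) \le \tfrac{\varepsilon}{2}n^2 + o(n^2) \le \varepsilon n^2$ edge edits. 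This yields $d(G,T_r(n)) \le \varepsilon n^2$, as desired.

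The main obstacle is the middle step: converting ``$\lambda(H)$ is within $o(n)$ of $\lambda(T_r(|W|))$'' into ``$e(H)$ is within $o(|W|^2)$ of $e(T_r(|W|))$.'' The naive inequality $e \ge \tfrac{n}{2}\lambda$ is false, so one cannot simply read off the edge count; the honest route is to note that an $F$-free graph with spectral radius exceeding $(1-\tfrac1r-\delta)n$ but edge count below $(1-\tfrac1r-\varepsilon_1)\tfrac{n^2}{2}$ would, after deleting a sparse set of edges via the Erd\H{o}s--Simonovits symmetrization, either become $K_{r+1}$-free-and-sparse (hence $\lambda < (1-\tfrac1r)n$, a mild contradiction when $\delta$ is small) or still contain many edges — and making this dichotomy quantitative, uniformly in $n$, is where the real work lies. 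An alternative that sidesteps this is to run F\"uredi's symmetrization argument directly on the eigenvector weights rather than on edge counts, which is presumably the route the authors take to obtain a genuinely ``new'' and unified proof.
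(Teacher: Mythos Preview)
Your proposal has a genuine gap precisely where you flag it: the conversion from ``$\lambda$ near $(1-\tfrac1r)n$'' to ``$e$ near $(1-\tfrac1r)\tfrac{n^2}{2}$''. Your Perron-vector cleaning step does not bridge this. First, with the normalization $\max_i x_i=1$ you cannot write $\lambda(G)=\bm{x}^{\top}A\bm{x}$; that identity needs $\|\bm{x}\|_2=1$. Second, even granting the eigenvalue equation $\lambda x_u=\sum_{v\sim u}x_v$ at a maximal-entry vertex, the conclusion that $|\{v:x_v\ge\varepsilon_1\}|\ge (1-\tfrac1r-\delta)n$ does not follow from any ``standard averaging'': all you get is $d(u)\ge\lambda$, which says nothing about how many \emph{other} entries are bounded below. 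Third, even if you did obtain a large induced subgraph $H$ with $\lambda(H)$ close to $(1-\tfrac1r)|H|$, you are back to the same obstacle on $H$, so no progress has been made. Your closing guess that the authors run a F\"uredi-type symmetrization on eigenvector weights is not what they do.

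The paper's route is short and avoids the eigenvector entirely. The two ingredients you are missing are (i) the Erd\H{o}s--Frankl--R\"odl removal lemma (Lemma~\ref{lemEFR}): since $G$ is $F$-free with $\chi(F)=r+1$, delete at most $\varepsilon_0 n^2$ edges to obtain a $K_{r+1}$-free subgraph $G^*$; and (ii) Nikiforov's inequality (Lemma~\ref{thmnikiforov}): for a $K_{r+1}$-free graph, $\lambda^2\le 2e(1-\tfrac1r)$. Step (i) costs at most $\sqrt{2\varepsilon_0}\,n$ in spectral radius, since $\lambda(G)\le\lambda(G^*)+\lambda(G\setminus G^*)$ and $\lambda(G\setminus G^*)\le\sqrt{2e(G\setminus G^*)}$. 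Hence $\lambda(G^*)\ge(1-\tfrac1r-\delta-\sqrt{2\varepsilon_0})n$, and now (ii) applies to $G^*$ (this is the point: you needed $K_{r+1}$-freeness, not merely $F$-freeness, to invoke the bound with clique number $r$), giving $e(G^*)\ge(1-\tfrac1r-2\delta-2\sqrt{2\varepsilon_0})\tfrac{n^2}{2}$. Edge stability (Theorem~\ref{thm12}) for $K_{r+1}$ then yields $d(G^*,T_r(n))\le\tfrac{\varepsilon}{2}n^2$, and the triangle inequality $d(G,T_r(n))\le d(G,G^*)+d(G^*,T_r(n))$ finishes. You actually mention the bound $\lambda\le\sqrt{2e(1-1/\omega)}$ in passing; the whole trick is to \emph{first} force $\omega\le r$ via removal so that this inequality does the spectral-to-edge conversion in one line.
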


Since the Rayleigh formula gives $2e(G)/n \le \lambda (G)$, 
 the spectral theorem of Nikiforov 
is a generalization of  the Erd\H{o}s--Stone--Simonovits theorem.  
Moreover, the spectral stability theorem  also generalizes  the 
Simonovits stability theorem.  
Just like the significance of the classical Erd\H{o}s--Simonovits stability theorem, 
with the development of spectral extremal graph theory, 
we  believe commonly that the spectral stability theorem 
will also play a vital role in solving the spectral extremal problems;  
see, e.g., \cite{CFTZ20,CDT2022,LP2021,DKLNTW2021} 
for recent progress.

\subsection{Generalized extremal graph problems}

Let $k_s(G)$ denote the number of copies of $K_s$ in $G$. 
In particular, we have $k_1(G)=v(G)$ and $k_2(G)=e(G)$. 
In 1949, Zykov \cite{Zykov1949}, and Erd\H{o}s \cite{Erd1962}  independently 
proved an extension of 
the Tur\'{a}n theorem, which states that 
if $G$ is an $n$-vertex $K_{r+1}$-free graph, 
then  $ k_s (G)\le k_s (T_r(n))$ for every $s=2,3,\ldots ,r$, 
equality holds if and only if $G$ is the Tur\'{a}n graph 
$T_r(n)$. 
For two graphs $T$ and $F$, 
the {\it generalized Tur\'{a}n number }
 $\mathrm{ex}(n,T,F)$ is defined as 
the maximum number of copies of $T$ in an $F$-free graph on $n$ vertices.  
For example, setting $F=K_2$, the extremal number  
$\mathrm{ex}(n,K_2,F)$ is the classical function $\mathrm{ex}(n,F)$.  
Under this definition, the result of Zykov and Erd\H{o}s can be written as 
\begin{equation} \label{eqeq4}
\mathrm{ex}(n,K_s, K_{r+1}) = k_s(T_r(n)). 
\end{equation}

In 2016, 
Alon and Shikhelman \cite{AS2016} 
systematically studied the function $\mathrm{ex}(n, T , F)$ for many 
various combinations of
$T$ and $F$. In particular, they proved  
the following theorem.

\begin{theorem}[Alon--Shikhelman, 2016] \label{thm15}
If $F$ is a graph with chromatic number $\chi (F)=r+1$, 
then for every $2\le s \le r$, we have 
\[  \mathrm{ex}(n,K_s, F) = k_s (T_r(n)) + o(n^s)
=(1+o(1)) {r \choose s}\left( \frac{n}{r}\right)^s . \] 
\end{theorem}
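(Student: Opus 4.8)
The plan is to deduce the Alon--Shikhelman counting theorem from the Erd\H{o}s--Stone--Simonovits Theorem \ref{thm11} together with the Zykov--Erd\H{o}s clique-count inequality \eqref{eqeq4}; this is exactly the ``unified treatment'' philosophy advertised in the abstract. The lower bound is immediate: the Tur\'an graph $T_r(n)$ is $K_{r+1}$-free, hence $F$-free (since $\chi(F)=r+1$ forces $F\nsubseteq T_r(n)$), so $\mathrm{ex}(n,K_s,F)\ge k_s(T_r(n))$, and a direct count gives $k_s(T_r(n))=(1+o(1)){r\choose s}(n/r)^s$. The whole content is therefore the matching upper bound: any $F$-free graph $G$ on $n$ vertices satisfies $k_s(G)\le k_s(T_r(n))+o(n^s)$.

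For the upper bound, first I would fix $\varepsilon>0$ and invoke Theorem \ref{thm11}: for $n$ large, $e(G)\le \mathrm{ex}(n,F)\le e(T_r(n))+\varepsilon n^2\le (1-\tfrac1r+2\varepsilon)\tfrac{n^2}{2}$. The key structural step is to discard the ``dense spots'' that could create many $K_s$'s: apply the Kruskal--Katona theorem (or a convexity/supersaturation argument) to relate $k_s(G)$ to $e(G)$, but this alone is too lossy. A cleaner route is to use the removal-type reduction standard in the Alon--Shikhelman circle: since $G$ is $F$-free and $e(G)=(1-\tfrac1r+o(1))\tfrac{n^2}{2}$, the graph removal lemma (applied to the family of copies of $K_{r+1}$, or to a single blow-up of $F$) lets us delete $o(n^2)$ edges to obtain a $K_{r+1}$-free graph $G'$; deleting $o(n^2)$ edges destroys at most $o(n^2)\cdot O(n^{s-2})=o(n^s)$ copies of $K_s$ (each edge lies in $O(n^{s-2})$ potential $s$-cliques), so $k_s(G)\le k_s(G')+o(n^s)$. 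Now $G'$ is $K_{r+1}$-free, so the Zykov--Erd\H{o}s inequality \eqref{eqeq4} gives $k_s(G')\le k_s(T_r(n))$, and combining the two displays yields $k_s(G)\le k_s(T_r(n))+o(n^s)$, as required. (If one prefers to avoid the removal lemma, the alternative is to run the Erd\H{o}s--Simonovits stability Theorem \ref{thm12}: it provides a partition of $V(G)$ into $r$ parts with $o(n^2)$ ``bad'' edges inside parts; then count $K_s$'s by how many vertices they place in each part, bounding the contribution of the $o(n^2)$ bad edges crudely by $o(n^s)$ and the remaining multipartite count by $k_s(T_r(n))$ via the fact that the balanced $r$-partition maximizes the $s$-clique count among complete $r$-partite graphs.)

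The main obstacle is organizing the error bookkeeping so that every ``$o$'' is genuinely $o(n^s)$ and not merely $o(n^2)$ times something uncontrolled: one must argue that each deleted or ``bad'' edge is contained in at most $\binom{n}{s-2}=O(n^{s-2})$ copies of $K_s$, and that the number of such edges is $o(n^2)$, so their total contribution is $o(n^{s})$ — this is where the hypothesis $s\le r$ (hence $s\ge 2$ and the exponent $s-2\ge 0$ makes sense) and the uniform largeness of $n$ enter. A secondary technical point is the extremal fact that among complete $r$-partite graphs on $n$ vertices the balanced one $T_r(n)$ maximizes $k_s$ for every $s\le r$; this is precisely the Zykov--Erd\H{o}s statement \eqref{eqeq4} specialized to complete multipartite graphs, so it may be quoted directly. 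Everything else is routine asymptotic estimation of $k_s(T_r(n))$.
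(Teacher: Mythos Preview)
Your proposal is correct and coincides with the paper's own proof: invoke the Erd\H{o}s--Frankl--R\"odl removal lemma (Lemma~\ref{lemEFR}) to delete $o(n^2)$ edges and obtain a $K_{r+1}$-free subgraph $G^*$, observe that each deleted edge lies in at most $\binom{n-2}{s-2}$ copies of $K_s$ so that $k_s(G)\le k_s(G^*)+o(n^s)$, and then apply the Zykov--Erd\H{o}s bound~\eqref{eqeq4} to $G^*$. One small clean-up: your preliminary appeal to Theorem~\ref{thm11} to bound $e(G)$ is unnecessary, since Lemma~\ref{lemEFR} applies to \emph{every} $F$-free graph regardless of its edge count; the paper simply starts from the removal step.
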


The following clique version of the stability theorem 
was proved by Ma and Qiu \cite[Theorem 1.4]{MQ2020}. 
 Obviously, taking the case $s=2$, 
we can see that Theorems \ref{thm15} and \ref{thm16} 
reduce to Theorems \ref{thm11} and \ref{thm12} respectively.  

\begin{theorem}[Ma--Qiu, 2020] \label{thm16}
Let $F$ be a graph with $\chi (F)=r+1\ge 3$. 
For every $\varepsilon >0$, 
there exist $\delta >0$ and $n_0$ such that 
if $G$ is an $F$-free graph on $n\ge n_0$ vertices 
 and 
$k_s(G)\ge {r \choose s} (\frac{n}{r})^s -\delta n^s$ for some $2\le s \le r$, 
 then  the edit distance $d(G,T_r(n)) \le \varepsilon n^2$. 
\end{theorem}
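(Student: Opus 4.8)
The plan is to reduce the clique-count hypothesis to the edge-count hypothesis of the Erd\H{o}s--Simonovits stability theorem (Theorem \ref{thm12}), and then invoke that theorem as a black box. The bridge between the two is the Zykov--Erd\H{o}s inequality (\ref{eqeq4}): among $K_{r+1}$-free graphs, the Tur\'an graph $T_r(n)$ simultaneously maximises $k_s$ for all $2 \le s \le r$. So the heart of the matter is a \emph{supersaturation-type} statement: if an $F$-free graph $G$ has $k_s(G) \ge k_s(T_r(n)) - \delta n^s$, then $e(G) \ge e(T_r(n)) - \varepsilon' n^2$ for a suitable $\varepsilon'$ depending on $\delta$. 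Once this is in hand, apply Theorem \ref{thm12} with $\varepsilon'$ in place of its $\delta$ to conclude $d(G, T_r(n)) \le \varepsilon n^2$.

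First I would handle the case $s = 2$ separately, where the statement is literally Theorem \ref{thm12} (as the authors note), so assume $3 \le s \le r$. The key step is the implication ``many $K_s$'s $\Rightarrow$ many edges.'' I would argue by contraposition via a counting/averaging argument. Since $G$ is $F$-free with $\chi(F) = r+1$, by the Erd\H{o}s--Stone--Simonovits theorem (Theorem \ref{thm11}) we already know $e(G) \le (1 - \tfrac1r + o(1))\tfrac{n^2}{2}$, i.e.\ $G$ is ``edge-sparse enough.'' Now suppose for contradiction that $e(G) \le e(T_r(n)) - \varepsilon' n^2$. One route: use a Kruskal--Katona-type or convexity bound showing that the number of $K_s$'s in a graph is controlled by its edge count together with its local structure — but a cleaner route, and the one I would pursue, is to first pass to the ``cleaned'' graph. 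Apply the standard symmetrisation/cleaning lemma (remove vertices of small degree, iteratively) to obtain a subgraph $G'$ on $n' \ge (1-o(1))n$ vertices with minimum degree $\delta(G') \ge (1 - \tfrac1r - o(1))n$, which by the Erd\H{o}s--Stone argument is still "dense" and loses only $o(n^s)$ copies of $K_s$; then on $G'$ one can use a removal-lemma / regularity argument, or the Zykov symmetrisation directly, to show that $k_s(G')$ being within $\delta n^s$ of the maximum forces $G'$ to be within $\varepsilon' n^2$ edges of $T_r(n')$ — and in particular $e(G') \ge e(T_r(n')) - \tfrac{\varepsilon'}{2}n^2$, which transfers back to $e(G) \ge e(T_r(n)) - \varepsilon' n^2$.

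Actually the slickest argument avoids regularity entirely. I would use the following chain: by a lemma of the Moon--Moser / Kruskal--Katona flavour (or by a direct Lagrangian/entropy computation), for any graph $H$ on $m$ vertices one has $k_s(H) \le \binom{s}{2}^{-1}$-type bounds — more precisely, I want the quantitative fact that if $e(H) \le (1 - \tfrac1r)\tfrac{m^2}{2} - \eta m^2$ and $H$ is $K_{r+1}$-free, then $k_s(H) \le k_s(T_r(m)) - c(\eta) m^s$ for some $c(\eta) > 0$. This is itself a stability statement for the Zykov--Erd\H{o}s inequality, and it can be proven by the same cleaning-plus-symmetrisation method F\"uredi used for Theorem \ref{thm12}: clean to high minimum degree, then note every vertex lies in few enough cliques, and sum. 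Combining this contrapositive with the hypothesis $k_s(G) \ge k_s(T_r(n)) - \delta n^s$ forces $e(G)$ to be within $\varepsilon' n^2$ of $e(T_r(n))$ once $\delta$ is small relative to $c(\varepsilon')$.

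The main obstacle, I expect, is making the implication ``$k_s$ near-maximal $\Rightarrow$ $e$ near-maximal'' quantitatively clean without circularity — one must be careful that the stability for $k_s$ doesn't secretly require the very conclusion we want. The safe way around this is to prove the weaker, purely quantitative bound ``$e(G) \le e(T_r(n)) - \varepsilon' n^2$ and $G$ being $K_{r+1}$-free (which follows from $F$-freeness up to deleting $o(n)$ vertices, since $F \subseteq K_{r+1}(t)$ for large $t$ by Erd\H{o}s--Stone) implies $k_s(G) \le k_s(T_r(n)) - c\varepsilon' n^s$'' — and this bound, being just an inequality about clique counts under an edge deficit, follows from convexity of $x \mapsto \binom{x}{s}$ applied to the partition structure one gets after cleaning, or directly from Zykov symmetrisation tracking the deficit. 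Then feeding this into Theorem \ref{thm12} finishes the proof. A secondary technical point is the transfer between $G$ and its cleaned subgraph $G'$: deleting $o(n)$ vertices changes $k_s$ by $o(n^s)$ and $e$ by $o(n^2)$ and changes $T_r(n)$ versus $T_r(n')$ by lower-order terms, so all the error budgets close provided $n_0$ is taken large enough and the $\delta$'s are nested appropriately ($\delta \ll \varepsilon' \ll \varepsilon$).
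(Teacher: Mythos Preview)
Your high-level strategy matches the paper's: reduce the clique-count hypothesis to the edge-count hypothesis and then invoke the Erd\H{o}s--Simonovits stability theorem (Theorem~\ref{thm12}) as a black box. Where you diverge is in the execution of the two key steps, and the paper's choices are substantially cleaner than the routes you sketch.

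First, for the passage from $F$-free to $K_{r+1}$-free, the paper does \emph{not} delete vertices (your phrase ``up to deleting $o(n)$ vertices'' is the wrong mechanism). Instead it applies the Erd\H{o}s--Frankl--R\"odl removal lemma (Lemma~\ref{lemEFR}) to delete at most $\varepsilon_0 n^2$ \emph{edges} and obtain a $K_{r+1}$-free subgraph $G^*$; since each edge lies in at most $\binom{n-2}{s-2}$ copies of $K_s$, this costs at most $\varepsilon_0 n^s$ copies of $K_s$, so the hypothesis $k_s(G^*)\ge \binom{r}{s}(n/r)^s-(\delta+\varepsilon_0)n^s$ survives intact. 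Second, for the implication ``many $K_s$ $\Rightarrow$ many edges'' in a $K_{r+1}$-free graph, the paper bypasses all of cleaning, symmetrisation, Kruskal--Katona, and Lagrangians: it simply invokes the Khadzhiivanov/S\'os--Straus inequality (Lemma~\ref{lemFR}), which states that in any $K_{r+1}$-free graph
\[
\left(\frac{k_2}{\binom{r}{2}}\right)^{1/2}\ \ge\ \left(\frac{k_s}{\binom{r}{s}}\right)^{1/s}.
\]
This single inequality converts the $k_s$ lower bound on $G^*$ directly into $e(G^*)\ge (1-\tfrac{1}{r})\tfrac{n^2}{2}-t$ with $t$ small, after which Theorem~\ref{thm12} (applied with $F=K_{r+1}$) gives $d(G^*,T_r(n))\le \tfrac{\varepsilon}{2}n^2$, and the triangle inequality $d(G,T_r(n))\le d(G,G^*)+d(G^*,T_r(n))$ finishes.

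So your plan is sound, but the quantitative contrapositive you propose to prove by hand (``$e(H)\le e(T_r(m))-\eta m^2$ and $H$ is $K_{r+1}$-free $\Rightarrow$ $k_s(H)\le k_s(T_r(m))-c(\eta)m^s$'') is exactly Lemma~\ref{lemFR} read backwards, and the worry you raise about circularity disappears once you use that lemma, whose proof is elementary and independent of any stability statement. The paper's argument is two lines of calculation rather than the multi-step cleaning-plus-symmetrisation programme you outline.
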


 The original proofs of both the  classical stability theorem of Erd\H{o}s and Simonovits,
  the spectral stability theorem of Nikiforov and the clique stability theorem 
  of Ma and Qiu 
(Theorems \ref{thm12},  \ref{thm14} and  \ref{thm16}, respectively)
are based on the proficient graph structure analysis.  
In 2015, F\"{u}redi  \cite{Fur2015} provided a concise and 
contemporary proof  
of Theorem \ref{thm12} 
by applying a result of Erd\H{o}s, Frankl and R\"{o}dl \cite{EFR1986}, 
which is a direct consequence of the Szemer\'{e}di regularity lemma. 
In 2021, Liu \cite{Liu2021} presented  short proofs to  the stability results of two extremal hypergraph problems.

\medskip 
In this paper, we study various stability theorems.  
Motivated by the works of F\"{u}redi and Liu, 
we shall present an alternative proof of the spectral stability theorem.  
Our method shows that the spectral stability theorem can be deduced 
from the Erd\H{o}s--Simonovits stability theorem. 
Moreover, we shall prove that 
the clique stability theorem of Ma and Qiu can also be deduced from 
the Erd\H{o}s--Simonovits stability theorem. 
Although the spectral version and clique version seem more 
stronger than the classical edge version, 
we show that these three different versions   
are  equivalent.

\section{Preliminaries}

In this section, we shall introduce some useful lemmas.  
The first lemma is 
a celebrated result of Erd\H{o}s, Frankl and R\"{o}dl \cite[Theorem 1.5]{EFR1986}. 
It is a direct consequence of 
the Szemer\'{e}di regularity lemma and graph embedding lemma.

\begin{lemma}[Erd\H{o}s--Frankl--R\"{o}dl, 1986] \label{lemEFR}
Let $F$ be a graph and $\varepsilon >0$ be an arbitrary  number. 
There is  $n_0$ such that 
if $n\ge n_0$ and $G$ is an $n$-vertex $F$-free graph, 
then we can remove at most $\varepsilon n^2$ edges from $G$ 
so that the remaining graph is $K_r$-free, where $r=\chi (F)$.  
\end{lemma}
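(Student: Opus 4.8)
The plan is to derive this lemma from Szemerédi's regularity lemma together with the standard embedding (key) lemma, in the usual \emph{regularize--clean--reduce} fashion. Put $r=\chi(F)$ and $t=v(F)$; since $F$ admits a proper colouring with $r$ colours, placing each colour class inside one part and padding shows $F\subseteq K_r(t)$, the complete $r$-partite graph with $t$ vertices in each part. Given $\varepsilon>0$, I would first fix auxiliary constants $\varepsilon'\ll d\ll\varepsilon$, where $d$ is a density threshold and $\varepsilon'$ is chosen small enough (in terms of $d$ and $t$) that the embedding lemma applies to produce a copy of $K_r(t)$ out of $r$ pairwise $\varepsilon'$-regular pairs of density at least $d$, provided the parts are sufficiently large.

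Next I would apply the regularity lemma to the $F$-free graph $G$ with regularity parameter $\varepsilon'$ and with a prescribed lower bound $m_0\ge 3/\varepsilon$ on the number of parts, obtaining an $\varepsilon'$-regular equipartition $V(G)=V_1\cup\cdots\cup V_k$ with $m_0\le k\le M=M(\varepsilon')$. From $G$ I delete: (i) all edges lying inside a part; (ii) all edges inside an irregular pair $(V_i,V_j)$; (iii) all edges inside a pair of density less than $d$. These three families contain at most $\tfrac{n^2}{2k}$, at most $\varepsilon' n^2$, and at most $d\,n^2$ edges respectively, so with $k\ge 3/\varepsilon$, $\varepsilon'\le\varepsilon/3$ and $d\le\varepsilon/3$ the total number of deleted edges is at most $\varepsilon n^2$. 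Let $G'$ be the resulting graph and let $R$ be the reduced graph on $\{1,\ldots,k\}$ with $i\sim j$ precisely when $(V_i,V_j)$ is $\varepsilon'$-regular of density at least $d$. I claim $R$ is $K_r$-free: if $R$ contained a copy of $K_r$ on parts $V_{i_1},\ldots,V_{i_r}$, then, taking $n_0$ large enough that $n/M$ exceeds the size threshold of the embedding lemma for the target $K_r(t)$, the $\binom{r}{2}$ mutually $\varepsilon'$-regular pairs of density $\ge d$ would force $K_r(t)\subseteq G$, hence $F\subseteq G$, contradicting that $G$ is $F$-free.

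Finally, $G'$ has no edge inside a part and no edge across a pair removed in step (ii) or (iii); hence every clique of $G'$ meets each part in at most one vertex and uses only pairs $ij$ with $i\sim j$ in $R$. Thus a copy of $K_r$ in $G'$ would project to a copy of $K_r$ in $R$, which is impossible, so $G'$ is $K_r$-free. Since $G'$ was obtained from $G$ by deleting at most $\varepsilon n^2$ edges, this proves the lemma.

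The one genuinely delicate point is the coordination of the constants. The parameter $\varepsilon'$ must be small enough for the embedding lemma to succeed with density threshold $d$ and blow-up size $t=v(F)$ (this forces $\varepsilon'\ll d$ and builds in the dependence on $F$), while at the same time $\varepsilon'$, $d$ and $1/k$ must all be small enough that the cleaning step discards at most $\varepsilon n^2$ edges; and $n_0$ must be chosen last, in terms of the bounded quantity $M(\varepsilon')$ and the embedding threshold, so that every part is large enough to host $K_r(t)$. Once the order of choice is fixed --- $d$ from $\varepsilon$, then $\varepsilon'$ from $d$ and $t$, then the lower bound on the number of parts from $\varepsilon$, then $n_0$ from $M(\varepsilon')$ and the embedding lemma --- the remaining estimates are routine.
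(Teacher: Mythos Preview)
Your argument is correct and is precisely the standard ``regularize--clean--reduce'' derivation that the paper alludes to: the paper does not give a self-contained proof of this lemma but merely states that it ``is a direct consequence of the Szemer\'{e}di regularity lemma and graph embedding lemma,'' and your write-up is exactly that consequence spelled out, with the constants coordinated in the right order.

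One small comparison worth noting: the only proof the paper actually writes out is for the hypergraph generalisation (Theorem~\ref{thm6599}), and there it takes a slightly different route, using the (hyper)graph removal lemma as a black box rather than the regularity lemma directly. Specialised to graphs, that argument says: since $F\subseteq K_r(t)$, an $F$-free $G$ is $K_r(t)$-free, hence contains only $o(n^r)$ copies of $K_r$ (by the supersaturation/counting side of regularity), and then the removal lemma deletes $o(n^2)$ edges to destroy all copies of $K_r$. Your approach is more hands-on and self-contained (it only needs the regularity and embedding lemmas, not the removal lemma), while the paper's approach is shorter once one is willing to quote the removal lemma; both are standard and equivalent in strength here.
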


 Lemma \ref{lemEFR} can be extended easily 
to hypergraphs as an implication of hypergraph removal lemma.   
For a $k$-uniform hypergraph $H$, 
{\it the $s$-blow-up of $H$}, denoted by $H(s)$,  
is a $k$-uniform hypergraph 
obtained from $H$ by replacing each vertex $v\in V(H)$ by 
an independent set $I_v$ of $s$ vertices, 
say $x_v^1,x_v^2,\ldots ,x_v^s$.
If $\{v_1,v_2,\ldots ,v_k\}$ is an edge of $H$, then 
we define  
$\{x_{v_1}^{a_1},x_{v_2}^{a_2},\ldots ,x_{v_k}^{a_k}\}$ 
as an edge of $H(s)$
for all $1\le a_1,\ldots ,a_k \le s$. That is, $H(s)$ is obtained from $H$ by 
replacing each edge of $H$ 
by a complete $k$-partite $k$-uniform hypergraph with each part of size $s$. 

  \begin{theorem}  \label{thm6599}
 Let $H$ be a  $k$-uniform hypergraph. 
 For every $\varepsilon >0$ and  $s\ge 1$, 
 there exists $n_0=n_0(H,\varepsilon, s)$ such that 
 if $G$ is an $H(s)$-free $k$-uniform hypergraph on 
 $n\ge n_0$ vertices, then 
 we can remove at most $\varepsilon n^k$ edges from 
 $G$ so that the remaining hypergraph is $H$-free. 
 \end{theorem}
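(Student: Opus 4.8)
The plan is to derive Theorem~\ref{thm6599} from the \emph{hypergraph removal lemma} combined with a supersaturation fact: an $H(s)$-free $k$-uniform hypergraph on $n$ vertices can contain only $o(n^h)$ copies of $H$, where $h=v(H)$. We may assume $s\ge 2$, since $H(1)=H$ and the case $s=1$ is trivial (delete no edges); we may also assume $H$ has at least one edge, for otherwise ``$G$ is $H(s)$-free'' forces $n<hs$ and the statement is vacuous once $n_0\ge hs$.

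First I would invoke the hypergraph removal lemma for the $k$-uniform hypergraph $H$: given $\varepsilon>0$ there are $c=c(H,\varepsilon)>0$ and $n_1$ such that any $k$-uniform hypergraph on $n\ge n_1$ vertices with at most $cn^h$ copies of $H$ can be made $H$-free by deleting at most $\varepsilon n^k$ edges. So it suffices to find $n_2=n_2(H,s)$ such that every $H(s)$-free $k$-uniform hypergraph on $n\ge n_2$ vertices has at most $cn^h$ copies of $H$; then $n_0=\max\{n_1,n_2,hs\}$ works. Equivalently (contrapositive), I must show: if a $k$-uniform hypergraph $G$ on $n$ vertices has more than $cn^h$ copies of $H$, then $G\supseteq H(s)$, provided $n$ is large enough in terms of $H$, $s$, $c$.

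To prove this, write $V(H)=\{1,\dots,h\}$ and build the auxiliary $h$-partite $h$-uniform hypergraph $\mathcal{H}$ with vertex classes $V_1,\dots,V_h$, each a copy of $V(G)$, where a tuple $(v_1,\dots,v_h)\in V_1\times\cdots\times V_h$ is a hyperedge exactly when $i\mapsto v_i$ is an embedding of $H$ into $G$. Each of the more than $cn^h$ copies of $H$ in $G$ yields at least one such embedding (and distinct copies give distinct embeddings), so $\mathcal{H}$ has more than $cn^h$ hyperedges while every part has size $n$. By the Erd\H{o}s box theorem (the extension of the K\H{o}v\'{a}ri--S\'{o}s--Tur\'{a}n theorem to $h$-partite $h$-uniform hypergraphs), once $n$ is large enough $\mathcal{H}$ contains a complete $h$-partite sub-hypergraph with parts $S_1\subseteq V_1,\dots,S_h\subseteq V_h$ of size $s$; that is, every transversal of $(S_1,\dots,S_h)$ is an embedding of $H$ into $G$. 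Since every transversal is injective, the sets $S_1,\dots,S_h$ are pairwise disjoint.

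It then remains to verify that $G$ restricted to $S_1\cup\cdots\cup S_h$ contains $H(s)$: identify the independent set $I_i$ of $H(s)$ with $S_i$. Given an edge $\{i_1,\dots,i_k\}$ of $H$ and vertices $w_{i_j}\in S_{i_j}$ for $1\le j\le k$, extend to a transversal $(v_1,\dots,v_h)\in S_1\times\cdots\times S_h$ with $v_{i_j}=w_{i_j}$; as this transversal is an embedding of $H$, the set $\{w_{i_1},\dots,w_{i_k}\}$ is an edge of $G$, exactly the edge required by the blow-up $H(s)$. Hence $H(s)\subseteq G$, which finishes the proof. I expect the supersaturation step of the previous paragraph to be the main obstacle: one must apply the multipartite Erd\H{o}s box theorem with the right quantitative threshold and keep careful track of the passage between copies, embeddings and automorphisms, as well as of the disjointness of the $S_i$; granting that, the hypergraph removal lemma supplies the rest. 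This is the natural hypergraph counterpart of the usual proof of Lemma~\ref{lemEFR}.
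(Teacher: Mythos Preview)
Your proposal is correct and follows essentially the same route as the paper: both arguments combine the hypergraph removal lemma with the supersaturation fact that an $H(s)$-free $k$-graph contains only $o(n^{h})$ copies of $H$, where $h=v(H)$. The paper's proof merely cites this supersaturation step as ``a theorem of Erd\H{o}s for $k$-partite $k$-graphs'' (referencing \cite[Theorem 2.2]{Keevash11}), whereas you spell it out via the auxiliary $h$-partite $h$-uniform embedding hypergraph and the Erd\H{o}s box theorem, and you also verify explicitly the disjointness of the sets $S_i$ and the presence of all blow-up edges; these are exactly the details hidden behind the paper's citation.
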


  \begin{proof}
 The proof is a well application of 
 the hypergraph removal lemma, 
 which states that 
 for every $k$-graph $H$ and every $\varepsilon >0$, 
there is $\delta =\delta_H (\varepsilon )>0$ 
such that any graph on $n$ vertices with at most $\delta n^{h}$ copies of $H$ can be 
made $H$-free by removing at most $\varepsilon n^k$ edges. 
 Let $G$ be an $H(s)$-free $k$-uniform hypergraph. 
 First of all, we can see that the number of copies of 
 $H$  is at most $o(n^h)$. Otherwise, applying a theorem of Erd\H{o}s for $k$-partite $k$-graph, 
 we can prove that  $\Omega (n^h) $ copies of $H$ in $G$ lead to a copy of 
 $H(s)$ for sufficiently large $n$; see, e.g., \cite[Theorem 2.2]{Keevash11}. 
 Next, applying the hypergraph removal lemma, 
 we can remove at most $o(n^k)$ edges from $H$ 
 to make it  $H$-free. 
 \end{proof}

The following lemma  describes a relationship 
between the number of copies of $K_s$ and $K_t$ in a $K_{r+1}$-free 
graph, where $s,t$ are two integers less than $r+1$. 

\begin{lemma}[Khadzhiivanov \cite{Kha1977}; S\'{o}s--Straus \cite{SS1982}] \label{lemFR}
Let $G$ be a $K_{r+1}$-free graph on $n$ vertices. 
For every $i\in [r]$, let $k_i$ denote the number of copies of $K_i$ 
in $G$. Then 
\[  \left( \frac{k_r}{{r \choose r}}\right)^{1/r} 
\le  \left( \frac{k_{r-1}}{{r \choose r-1}}\right)^{1/(r-1)} 
\le \cdots \le \left( \frac{k_2}{{r \choose 2}}\right)^{1/2} 
\le \left( \frac{k_1}{{r \choose 1}}\right)^{1/1}.  \]
\end{lemma}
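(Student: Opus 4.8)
The plan is to peel the chain apart into its consecutive links and prove each one by induction on $r$. It suffices to establish, for every $i$ with $2\le i\le r$, the single inequality
\[
\left(\frac{k_i}{\binom{r}{i}}\right)^{1/i}\ \le\ \left(\frac{k_{i-1}}{\binom{r}{i-1}}\right)^{1/(i-1)},
\]
since concatenating these over $i$ produces exactly the asserted string. I would argue by induction on $r$. For $r=2$ the graph $G$ is triangle-free, so $k_j=0$ for $j\ge 3$ and the only nontrivial case reads $k_2^{1/2}\le n/2$, that is $e(G)\le n^2/4$; this is Mantel's theorem, contained in Tur\'{a}n's theorem \eqref{eqTuran}. (One may start from $r=1$ instead, where $G$ is edgeless and there is nothing to prove.)

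For the inductive step, suppose the statement holds for every $K_r$-free graph and let $G$ be $K_{r+1}$-free on $n$ vertices. The engine consists of the two neighbourhood identities obtained by counting each clique through its vertices,
\[
i\,k_i(G)=\sum_{v\in V(G)}k_{i-1}\bigl(G[N(v)]\bigr)
\quad\text{and}\quad
(i-1)\,k_{i-1}(G)=\sum_{v\in V(G)}k_{i-2}\bigl(G[N(v)]\bigr),
\]
together with the fact that each $G[N(v)]$ is $K_r$-free, so the inductive hypothesis applies inside every neighbourhood. Using it to bound $k_{i-1}(G[N(v)])$ by a suitable power of $k_{i-2}(G[N(v)])$, substituting into the first identity, and recombining the summands by a power-mean (Maclaurin-type) inequality while comparing with the second identity, one reaches an inequality of the desired form between $k_i(G)$ and $k_{i-1}(G)$. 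The a priori estimate $k_{i-1}(G)\le\binom{r}{i-1}(n/r)^{i-1}$, which follows from the Zykov--Erd\H{o}s bound \eqref{eqeq4}, can be inserted to absorb the residual powers of $n$ that appear along the way.

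The step I expect to be the main obstacle is making the constant in this recombination exactly $\binom{r}{i}/\binom{r}{i-1}^{i/(i-1)}$: a careless estimate — replacing $\deg(v)$ by $n$, or applying convexity in whichever direction is convenient — produces a spurious multiplicative factor of order $r/(r-1)$, which is fatal. Obtaining the sharp constant is precisely the point at which the normalisation by $\binom{r}{i}$ must be respected, and it reflects the fact that the balanced complete $r$-partite graph is the extremal configuration. As a conceptual check on the bookkeeping I would keep in mind the weighted reformulation: attaching a probability weighting to $V(G)$ and replacing $k_j(G)$ by the degree-$j$ homogeneous clique polynomial of $G$, one sees that for a complete multipartite graph this polynomial is the elementary symmetric function of the part-weights, so that the chain is literally Maclaurin's inequality for symmetric functions. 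A variant of the whole argument can then be run by reducing an arbitrary $K_{r+1}$-free graph to a complete $r$-partite one through Zykov symmetrisation; there the difficulty migrates to checking that a symmetrisation step cannot reverse the inequality, which one would verify by tracking the simultaneous change of $k_i$ and $k_{i-1}$ under cloning a vertex. I would present the inductive proof, with Mantel's theorem as the base case and Maclaurin's inequality serving as the template for getting the constant in the power-mean step right.
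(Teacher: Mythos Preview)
The paper does not actually prove this lemma; it is quoted from the literature \cite{Kha1977,SS1982}, with a pointer to \cite{Niki2013} for a complete analytical proof. So there is no ``paper's proof'' to compare your proposal against.

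On your proposal itself: the obstacle you flag is real, and as the argument stands it is fatal rather than merely inconvenient. After applying the inductive hypothesis inside each neighbourhood you obtain an upper bound $k_{i-1}(G[N(v)])\le C\cdot k_{i-2}(G[N(v)])^{(i-1)/(i-2)}$, and summing over $v$ you must then compare $\sum_v k_{i-2}(G[N(v)])^{(i-1)/(i-2)}$ with $\sum_v k_{i-2}(G[N(v)])=(i-1)k_{i-1}(G)$. Since the exponent $(i-1)/(i-2)$ exceeds $1$, the power-mean inequality goes the wrong way: it gives a \emph{lower} bound on the first sum in terms of the second, not an upper bound. Nor does the Zykov--Erd\H{o}s bound \eqref{eqeq4} rescue this, because the first sum is governed by the spread of the neighbourhood statistics, which the $K_{r+1}$-freeness does not control. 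Already at $i=3$ the method yields $3k_3\le \frac{r-2}{2(r-1)}\sum_v d(v)^2$, and there is no upper bound on $\sum_v d(v)^2$ in terms of $\sum_v d(v)$ for $K_{r+1}$-free graphs (a star is triangle-free). Your second route --- reduce to complete multipartite graphs and invoke Maclaurin's inequality on the part sizes --- is essentially the correct one and is close to the analytical proof the paper cites; but the work there is precisely in showing that a symmetrisation step does not increase the normalised ratio $k_i\,/\,k_{i-1}^{\,i/(i-1)}$, which cannot be read off by tracking $k_i$ and $k_{i-1}$ separately. I would drop the neighbourhood induction and develop the symmetrisation argument (or the Motzkin--Straus/Lagrangian argument referenced in \cite{Niki2013}) in full.
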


After finishing this paper, Nikiforov \cite{Nikpri2021} told us that 
Lemma \ref{lemFR} was rediscovered independently by many people in the literature; 
see \cite{Niki2013} for a short survey 
including a complete
analytical proof  and determining the cases of equality.

\medskip 
Let $G$ be a graph on $n$ vertices with $m$ edges. 
Let $A$ be the adjacency matrix of $G$. 
It is well-known that $2m/n \le \lambda (G)\le \sqrt{2m}$, 
which is guaranteed by the Rayleigh inequality $\lambda (G)\ge \bm{1}^TA\bm{1}/ (\bm{1}^T\bm{1})=2m/n$ and $\lambda(G)^2 \le \sum_{i=1}^n \lambda_i^2 =\mathrm{tr}(A^2) =\sum_{i=1}^n d_i=2m$. 
Moreover, we can easily show that  
\begin{equation}  \label{equp}
\lambda (G)\le \sqrt{2m\left(1-\frac{1}{n}\right)}. 
\end{equation} 
 Indeed, we  
 observe first that $\sum_{i=1}^n \lambda_i= \tr (A) =0$ and 
 $\sum_{i=1}^n \lambda_i^2 =\tr (A^2)=\sum_{i=1}^n d_i=2m$. 
 Applying the Cauchy--Schwarz inequality, we obtain 
 $(2m-\lambda_1^2)(n-1)=
 (\lambda_2^2 + \cdots + \lambda_n^2) (n-1) 
 \ge (\lambda_2 + \cdots + \lambda_n)^2=\lambda_1^2$, 
 which implies $\lambda_1^2\le 2m(1-\frac{1}{n})$.

\medskip 
In 2002, Nikiforov \cite{Niki2002cpc}  proved a further improvement 
by applying the Motzkin--Straus theorem. 
The conditions when equality holds in Lemma \ref{thmnikiforov} was later 
determined in \cite{Niki2009jctb}.

\begin{lemma}[Nikiforov, 2002] \label{thmnikiforov}
Let  $G$ be a graph with $m$ edges.  Let $\omega$ be the clique number of $G$, the size of a largest complete subgraph of $G$. 
Then 
\begin{equation*}
  \lambda (G) \le \sqrt{2m\Bigl( 1-\frac{1}{\omega}\Bigr)}. 
\end{equation*}
Moreover,  the equality holds 
if and only if 
$G$ is a complete bipartite graph for $r=2$, 
or a complete regular $r$-partite graph for $r\ge 3$ 
by adding some isolated vertices.
\end{lemma}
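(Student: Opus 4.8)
\medskip
\noindent\emph{Proof strategy.}
The plan is to deduce the inequality from the Motzkin--Straus theorem via a single application of the Cauchy--Schwarz inequality, and then to read off the equality cases by examining when these two estimates are tight. Recall that the Motzkin--Straus theorem asserts that for any graph $G$ with clique number $\omega$,
\[ \max\Bigl\{ \sum_{ij\in E(G)} y_iy_j \;:\; y_i\ge 0,\ \textstyle\sum_i y_i=1 \Bigr\}=\tfrac12\Bigl(1-\tfrac1\omega\Bigr). \]

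First I would choose, by the Perron--Frobenius theorem, a non-negative eigenvector $\mathbf{x}=(x_1,\dots,x_n)$ of $A(G)$ for the eigenvalue $\lambda(G)$, normalised so that $\sum_i x_i^2=1$; the Rayleigh quotient then gives $\lambda(G)=\mathbf{x}^TA(G)\mathbf{x}=2\sum_{ij\in E(G)}x_ix_j$. Applying Cauchy--Schwarz to the $m$ summands $x_ix_j$ and setting $y_i:=x_i^2$ (so that $y_i\ge 0$ and $\sum_i y_i=\|\mathbf{x}\|_2^2=1$, hence $\mathbf{y}$ lies in the standard simplex), I obtain
\[ \lambda(G)=2\sum_{ij\in E(G)}x_ix_j\le 2\sqrt{m}\Bigl(\sum_{ij\in E(G)}x_i^2x_j^2\Bigr)^{1/2}=2\sqrt{m}\Bigl(\sum_{ij\in E(G)}y_iy_j\Bigr)^{1/2}\le 2\sqrt{m}\,\sqrt{\tfrac12\bigl(1-\tfrac1\omega\bigr)}, \]
which is precisely $\lambda(G)\le\sqrt{2m(1-1/\omega)}$. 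In this form the inequality is a two-line consequence once the Motzkin--Straus theorem is available.

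For the equality characterisation I would first discard isolated vertices (which affect none of $\lambda(G)$, $m$, $\omega$) and reduce to a connected graph, on which $\mathbf{x}>0$. Equality in the Cauchy--Schwarz step forces $x_ix_j$ to equal a common constant $c$ over all edges; substituting into the eigen-equation $\lambda(G)\,x_i=\sum_{j\sim i}x_j$ yields $\lambda(G)\,x_i^2=c\,d_i$, and hence $d_id_j=\lambda(G)^2$ for every edge $ij$, where $d_i$ denotes the degree of $i$. When $\omega=2$ this relation, together with the equality $\lambda(G)^2=m$ (and Mantel's theorem ruling out a triangle-free non-bipartite alternative), forces $G$ to be a complete bipartite graph. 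When $\omega\ge3$, propagating $d_id_j=\lambda(G)^2$ around an odd cycle forces $G$ to be $\lambda(G)$-regular with $\lambda(G)=n(1-1/\omega)$; then $\mathbf{y}=(x_i^2)$ is the uniform distribution on $V(G)$, and equality in the Motzkin--Straus step says exactly that this uniform distribution is an optimal solution, which — by the description of the Motzkin--Straus maximisers — pins down $G$ as a balanced complete $\omega$-partite graph. Reinstating the isolated vertices gives the stated form; alternatively, since these equality conditions were established in \cite{Niki2009jctb}, one may simply cite that reference for the characterisation.

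The main difficulty is not the inequality but the equality analysis: one has to handle carefully the description of the maximisers of the Motzkin--Straus functional — the support of an optimal $\mathbf{y}$ need not itself be a clique — and to treat cleanly the split between $\omega=2$, where every complete bipartite graph is extremal regardless of balance, and $\omega\ge3$, where the $\omega$-partition is forced to be balanced. Bringing the equality case of the Motzkin--Straus theorem into exactly the form needed here (or else appealing to \cite{Niki2009jctb}) is the step that demands the most attention.
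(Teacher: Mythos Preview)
The paper does not supply its own proof of this lemma; it merely cites Nikiforov \cite{Niki2002cpc} for the inequality, remarking that it was obtained ``by applying the Motzkin--Straus theorem'', and cites \cite{Niki2009jctb} for the equality characterisation. Your argument is correct and is precisely Nikiforov's original one---Cauchy--Schwarz on the Perron eigenvector followed by Motzkin--Straus---and your handling of the equality case (sketching the analysis and, where it gets delicate, deferring to \cite{Niki2009jctb}) mirrors exactly what the paper does.
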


\noindent 
{\bf Remark.} 
Lemma \ref{thmnikiforov} implies that 
if $G$ is a $K_{r+1}$-free graph with $m$ edges, 
then $\lambda (G)^2 \le {2m(1-\frac{1}{r})}$.  
On the one hand, combining with $2m/n \le \lambda (G)$, 
we get the Tur\'{a}n theorem $m\le 
(1-\frac{1}{r}) \frac{n^2}{2}$. 
On the other hand, using $m\le 
(1-\frac{1}{r}) \frac{n^2}{2}$, we have $\lambda (G) \le (1-\frac{1}{r})n$.

\section{Main results}

\subsection{Alternative proofs}

Nikiforov's proof of Theorem \ref{thm13} 
depends on two of his important theorems: 
the first theorem investigated the relation between 
the number of cliques and the spectral radisu \cite{BN2007jctb}, 
the second theorem states that 
every graph with many $r$-cliques contains a large complete 
$r$-partite subgraph \cite{Niki2008blms}. 
The proof of Alon and Shikhelman for Theorem \ref{thm15} applied 
 the graph removal lemma and needed a proposition \cite[Proposition 2.1]{AS2016} 
 to show that the number of copies of $K_{r+1}$ in $G$ is at most $o(n^{r+1})$.

\begin{proof}[Alternative proof of Theorems \ref{thm13} and \ref{thm15}]
Since $\chi (F)=r+1$, we know that the Tur\'{a}n graph 
$T_r(n)$ is $F$-free. Moreover, we have $\lambda (T_r(n)) 
\ge (1-\frac{1}{r})n- \frac{r}{4n}$ and for $2\le s\le r$, 
$k_s(T_r(n))=\sum_{0\le i_1\le \cdots \le i_s\le r-1} 
\prod_{t=1}^s \lfloor \frac{n+i_t}{r}\rfloor = 
{r \choose s} (\frac{n}{r})^s - o(n^s)$. 
Thus the lower bounds in Theorems \ref{thm13} and \ref{thm15} can be  witnessed by taking $T_r(n)$ as an example. 
Now, assume that $G$ is an $n$-vertex $F$-free graph. 
By Lemma \ref{lemEFR}, we can remove $o(n^2)$ edges from 
$G$ and get a new graph $G^*$ which is $K_{r+1}$-free. 

On the one hand, we claim that the removal of $o(n^2)$ edges from $G$ can only decrease $\lambda (G)$ by at most 
$o(n)$. Indeed, the Rayleigh inequality gives 
$\lambda (G) \le \lambda (G^*) + \lambda (G\setminus G^*) $ and 
the inequality (\ref{equp}) implies $\lambda (G\setminus G^*) 
\le \sqrt{2e(G\setminus G^*)} = o(n)$. 
Since $G^*$ is $K_{r+1}$-free, the Nikiforov result (\ref{eqeq2}) 
implies $\lambda (G^*) \le \mathrm{ex}_{\lambda}(n,K_{r+1}) 
=\lambda (T_r(n))$. Thus, we get $\lambda (G) \le \lambda (T_r(n)) + o(n)$. This completes the proof of Theorem \ref{thm13}. 

On the other hand, each edge of $G$ is contained in at most 
${n-2 \choose s-2}$ copies of $K_s$. This implies that the removal of $o(n^2)$ edges from $G$ 
can merely remove at most $o(n^2){n-2 \choose s-2}=o(n^s)$ copies of 
$K_s$, thus $k_s(G^*)\ge k_s(G)-o(n^s)$. 
Note that $G^*$ is $K_{r+1}$-free, hence the Zykov result (\ref{eqeq4}) 
gives $k_s(G^*)\le \mathrm{ex}(n,K_s,K_{r+1}) =k_s(T_r(n))$. 
Therefore, we obtain $k_s(G)\le k_s(G^*) +o(n^s)\le 
k_s(T_r(n)) + o(n^s)$. This completes the proof of Theorem \ref{thm15}. 
\end{proof}

Being an interesting property of extremal problems, 
the spectral stability theorem 
also gives rise to a surprisingly useful tool for proving the exact values of spectral 
Tur\'{a}n extremal problems. We note that the original proof of Theorem \ref{thm14} 
presented in \cite{Niki2009jgt} relies heavily on  
a spectral stability result of large joint \cite[Theorem 4]{Niki2009ejc} as well as 
a renowned result in \cite[Theorem 1]{Niki2008blms}. 
But the proof of the spectral stability result of large joint 
stated in \cite{Niki2009ejc} seems complicated and 
needs a series of results from Nikiforov's works 
in the order 
\cite{BN2007jctb, Niki2008blms, BN2008dm, Niki2010dm, Niki2008}. 
In the sequel, we shall provide a new short proof of Theorem \ref{thm14}. 
The line of proofs are quite different. 

\begin{proof}[Alternative proof of Theorem \ref{thm14}]
Recall that $F$ is a graph with $\chi (F)=r+1\ge 3$. 
Let $\varepsilon >0$ be a small fixed  number. 
Let $G$ be an $F$-free graph on $n$ vertices such that 
$\lambda (G) \ge (1-\frac{1}{r} -\delta )n$, 
where $n$ is a large enough number and $\delta$ is a small enough number 
determined later. 
Applying the Simonovits stability theorem (Theorem \ref{thm12}) to $K_{r+1}$, 
we know that there exist $\delta (K_{r+1}, \frac{\varepsilon}{2} )>0$ 
and $n_0=n_0(K_{r+1}, \frac{\varepsilon}{2} )$ such that 
if $H$ is an arbitrary graph on $n\ge n_0$ vertices satisfying that 
 $H$ is $K_{r+1}$-free and  
$e(H)\ge \left(1- \frac{1}{r} - \delta (K_{r+1}, \frac{\varepsilon}{2}) \right) \frac{n^2}{2}$, 
 then the edit distance $d(H,T_r(n))\le \frac{\varepsilon}{2} n^2$.

Let $\varepsilon_0 \in (0, \frac{\varepsilon}{2}]$ be a sufficiently small number determined later. 
Since $G$ is $F$-free, applying Lemma \ref{lemEFR}, 
we obtain an integer $n_1=n_1(F,\varepsilon_0)$ such that 
if $n\ge n_1$,  
then we  get a $K_{r+1}$-free subgraph  by removing at most 
$\varepsilon_0 n^2$ edges from $G$. 
We denote the resulting subgraph by $G^*$. 
Moreover, the Rayleigh formula and inequality (\ref{equp}) give  
$ \lambda (G) \le 
\lambda (G^*) + \lambda (G\setminus G^*)
<  \lambda (G^*) +  \sqrt{2e(G\setminus G^*)} $. 
This implies that $\lambda (G^*)> \lambda (G) -\sqrt{2\varepsilon_0 }n
\ge (1-\frac{1}{r}-\delta -\sqrt{2\varepsilon_0})n$. 
Recall that $G^*$ is $K_{r+1}$-free. 
Applying the remark of Lemma \ref{thmnikiforov} to  $G^*$, 
we have $\lambda (G^*)^2 \le (1-\frac{1}{r})2e(G^*)$, which 
implies $e(G^*)\ge \left(1-\frac{1}{r}-2(\delta +\sqrt{2\varepsilon_0}) 
\right)\frac{n^2}{2}$. 
We  choose sufficiently small $\varepsilon_0>0$, and 
sufficiently large $n\ge \max\{n_0,n_1\}$, then we can choose 
small $\delta >0$ such that $2(\delta +\sqrt{2\varepsilon_0}) 
\le \delta (K_{r+1},\frac{\varepsilon}{2})$. The Simonovits stability theorem 
gives $d(G^*,T_r(n))\le \frac{\varepsilon}{2} n^2$. 
Thus $d(G,T_r(n))\le d(G,G^*) + d(G^*, T_r(n)) 
\le \varepsilon n^2$. 
\end{proof}

We now give a short proof of Theorem \ref{thm16}. 

\begin{proof}[Alternative proof of Theorem \ref{thm16}] 
Let $G$ be an $F$-free graph on $n$ vertices such that 
$k_s (G) \ge  {r \choose s} (\frac{n}{r})^s - \delta n^s$, 
where $n$ is a large enough number and $\delta$ is a small enough number 
determined later. 
By Theorem \ref{thm12}, we know that for every $\varepsilon >0$, 
there exist $\delta (K_{r+1}, \frac{\varepsilon}{2} )>0$ and $n_0(K_{r+1}, \frac{\varepsilon}{2} )$ such that 
if $H$ is a graph on $n\ge n_0$ vertices, 
 and $H$ is $K_{r+1}$-free such that 
$e(H)\ge \left(1- \frac{1}{r} - \delta (K_{r+1}, \frac{\varepsilon}{2} )
 \right) \frac{n^2}{2}$, 
 then the edit distance $d(H,T_r(n))\le \frac{\varepsilon}{2} n^2$.

Let $\varepsilon_0 >0$ be a sufficiently small number. 
Since $G$ is $F$-free, applying Lemma \ref{lemEFR}, 
we obtain an integer $n_1=n_1(F,\varepsilon_0)$ such that 
if $n\ge n_1$,  
then we  get a $K_{r+1}$-free subgraph  by removing at most 
$\varepsilon_0 n^2$ edges from $G$. 
We denote the resulting subgraph by $G^*$. 
Moreover, every edge of $G$ is contained in at most 
${n-2 \choose s-2}$ copies of $K_s$. Thus 
the removal of $\varepsilon_0 n^2$ edges from $G$ can only destroy 
at most $\varepsilon_0 n^2 {n-2 \choose s-2}<\varepsilon_0 n^s$ copies of $K_s$ in $G$. 
This implies that $k_s (G^*)\ge k_s (G) - \varepsilon_0 n^s 
\ge {r \choose s} (\frac{n}{r})^s - \delta n^s -\varepsilon_0 n^s$. 
Note that $G^*$ is $K_{r+1}$-free. 
Applying Lemma \ref{lemFR} to  $G^*$, 
we have $(k_2 (G^*)/ {r \choose 2})^{1/2} \ge (k_s(G^*) /{r \choose s})^{1/s}$, which 
implies $e(G^*)\ge 
{r \choose 2} \left( 
(\frac{n}{r})^s -(\delta  + \varepsilon_0)n^s/ 
{r \choose s} \right)^{2/s}
\ge (1-\frac{1}{r})\frac{n^2}{2} - t$, where 
$t={r \choose 2} \left( (\delta  + \varepsilon_0)n^s/ 
{r \choose s} \right)^{2/s}$. 
We  choose sufficiently small $\varepsilon_0>0$, and 
sufficiently large $n\ge \max\{n_0,n_1\}$, then we can choose 
 $\delta >0$ small enough such that $t \le \delta (K_{r+1},\frac{\varepsilon}{2})\frac{n^2}{2}$. 
Thus  the Simonovits stability theorem implies 
$d(G,T_r(n))\le d(G,G^*) + d(G^*,T_r(n)) \le \varepsilon n^2$. 
\end{proof}

\subsection{Revisiting color-critical graphs}

\label{Sec3.2}
Let $e$ be an edge of graph $F$. 
We say that $e$ is a color-critical edge of $F$ if  
$\chi (F-e)<\chi (F)$. We say that $F$ is color-critical 
if $F$ contains a color-critical edge. 
There are many graphs that are color-critical. 
For instance, the following graphs are color-critical. 
Every edge of the complete graph $K_{r+1}$ is a color-critical edge. 
Then $K_{r+1}$ is color-critical with $\chi (K_{r+1})=r+1$. 
Every edge of the odd cycle $C_{2k+1}$ is a color-critical edge. 
So  $C_{2k+1}$ is color-critical with $\chi (C_{2k+1})=3$. 
Let $W_{n}=K_1 \vee C_{n-1}$ be the wheel graph on $n$ vertices, 
a vertex that joins all vertices of  $C_{n-1}$. 
The even wheel graph $W_{2k}$ is color-critical and $\chi (W_{2k})=4$. 
However, we can check that the odd wheel graph $W_{2k+1}$ is not color-critical and $\chi (W_{2k+1})=3$. 
Let $B_k=K_2 \vee I_{k-2}$ be the book graph, 
that is, $k$ triangles sharing a common edge. Then 
$B_k$ is color-critical and $\chi (B_k)=3$.

In 1966, Simonovits \cite{Sim1966} proved 
a celebrated result, which gives 
the exact Tur\'{a}n number for all color-critical graphs. 

\begin{theorem}[Simonovits, 1966] \label{thmSim66}
If $F$ is a  graph with a critical edge and $\chi (F)=r+1$ where $r\ge 2$,  
then there exists an $n_0=n_0(F)$ 
such that  
\[    \mathrm{ex}(n,F)=e(T_r(n))  \]
 holds for all $n\ge n_0$, and 
the unique extremal graph is the Tur\'{a}n graph $T_r(n)$. 
\end{theorem}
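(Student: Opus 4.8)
The plan is to derive the Simonovits exact result (Theorem~\ref{thmSim66}) from the Erd\H{o}s--Simonovits stability theorem (Theorem~\ref{thm12}) together with a local-perturbation argument. The intuition is standard: the stability theorem tells us that an almost-extremal $F$-free graph $G$ must be \emph{close} to the Tur\'an graph $T_r(n)$ in edit-distance, and then one uses the color-criticality of $F$ to ``clean up'' the few defects and conclude that $G$ can have no more edges than $T_r(n)$, with equality forcing $G=T_r(n)$.

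First I would fix $\varepsilon>0$ small (to be specified), apply Theorem~\ref{thm12} to get $\delta>0$ and $n_0$, and let $G$ be an $n$-vertex $F$-free graph with $e(G)\ge e(T_r(n))$; in particular $e(G)\ge (1-\tfrac1r-\delta)\tfrac{n^2}{2}$, so $d(G,T_r(n))\le\varepsilon n^2$. Writing $V(G)=V_1\cup\cdots\cup V_r$ for the partition inherited from the closest copy of $T_r(n)$, this means: the total number of edges inside the parts (``bad edges'') plus the number of missing edges between parts is at most $\varepsilon n^2$. Since $e(G)\ge e(T_r(n))\ge (1-\tfrac1r)\tfrac{n^2}{2}-\tfrac{r}{8}$ and the number of cross-pairs is at most $(1-\tfrac1r)\tfrac{n^2}{2}$, the count of missing cross-edges is at most $\varepsilon n^2+\tfrac r8$, and the number of bad edges is at most the same order. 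Next I would show every part is large, $|V_i|=n/r+O(\sqrt{\varepsilon}\,n)$ say, and then invoke a cleaning step: a standard symmetrization/local-switching argument shows we may assume every vertex $v$ has at most, e.g., $\sqrt{\varepsilon}\,n$ neighbours inside its own part (otherwise move $v$ to a part where it has fewer non-neighbours; this does not decrease $e(G)$ and keeps $G$ $F$-free is \emph{not} automatic, so instead one argues about the hypothetical extremal $G$ with $e(G)$ maximum directly).

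The crucial use of color-criticality is the following: if $G$ is $F$-free and contains a large ``almost-complete $r$-partite'' structure, then in fact $G$ must be \emph{exactly} $r$-partite on the relevant vertex set, because any edge inside a part, together with the rich connections to the other $r-1$ parts, would create a copy of $F$. Concretely, if $e=xy$ is a critical edge of $F$ so $\chi(F-e)=r$, then a copy of $K_{r+1}$ minus an edge — actually of $F-e$ embedded across the $r$ parts plus one edge inside a part — yields $F$; one needs that each of the $r-1$ ``other'' parts retains enough vertices adjacent to both endpoints of a within-part edge. A counting argument (each low-degree or high-within-degree vertex is rare, so most vertices have $\ge n/r - o(n)$ neighbours in every other part) supplies the required common neighbourhoods via a greedy embedding of $F-e$. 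This forces: no edges inside parts; hence $G$ is $r$-partite, so $e(G)\le e(T_r(n))$ by Tur\'an's theorem~\eqref{eqTuran}; and equality holds iff $G=T_r(n)$.

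The main obstacle I anticipate is the cleaning/extremal-configuration step: one cannot freely do local switches while preserving $F$-freeness, so the cleanest route is to take $G$ to be an $F$-free graph on $n\ge n_0$ vertices with $e(G)=\mathrm{ex}(n,F)\ge e(T_r(n))$ and argue by contradiction that $G$ has a within-part edge. Bounding the common neighbourhood (in the $r-1$ remaining parts) of the two endpoints of that edge — uniformly enough to embed \emph{every} color-critical $F$ with $\chi(F)=r+1$, not just $K_{r+1}$ — requires care: one must control the number of ``defective'' vertices in each part and ensure $|V_i|$ grows linearly, using $\varepsilon\ll 1/|V(F)|$ and $n$ large. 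Once the common neighbourhoods are shown to have size $\ge(1-o(1))n/r$ in each of the other parts, a routine greedy embedding of $F-e$ (whose parts have bounded size) plus the within-part edge produces the forbidden $F$, the desired contradiction. This pins down $G=T_r(n)$ exactly and completes the proof; I would close by noting the uniqueness of the extremal graph is immediate since any $r$-partite $n$-vertex graph with $e(T_r(n))$ edges must be the balanced $T_r(n)$.
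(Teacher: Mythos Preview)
Your overall strategy matches the paper's (the paper actually derives Theorem~\ref{thmSim66} from the spectral Theorem~\ref{thmcri} via~\eqref{eqeqq3}, and remarks that the same argument proves the edge version directly): apply stability, extract an almost-complete $r$-partite structure, and use color-criticality to upgrade to exact $r$-partiteness. But there is a real gap precisely at the step you flag as ``requiring care.'' You want any within-part edge $xy$ to yield a copy of $F$ by greedily embedding $F-e$ into the common neighbourhoods of $x$ and $y$ in the other $r-1$ parts. This works only when $x$ and $y$ each have near-full degree into every other part. Stability, however, only bounds the \emph{total} number of missing cross-edges by $\varepsilon n^2$; it does not rule out a small set of vertices that miss most of some part or have globally low degree. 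If $x$ or $y$ is such a defective vertex, the embedding fails --- and you cannot simply discard defective vertices, because the within-part edges you are trying to kill may all be incident to them. Your ``control the number of defective vertices'' remark does not address this: bounding how many there are is not the same as handling the edges they carry.

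The paper's proof of Theorem~\ref{thmcri} supplies exactly the two ingredients you are missing. First, a minimum-degree reduction: iteratively delete vertices of degree below $(1-\tfrac1r-\varepsilon)n$ to pass to a subgraph $G_1$ on $n_1\ge n/2$ vertices that still satisfies the extremal hypothesis and has $\delta(G_1)\ge(1-\tfrac1r-\varepsilon)n_1$ (if this process drives the edge count too high relative to $n_1$, Theorem~\ref{thm11} already gives a contradiction). Second, after separating off the $O(\sqrt{\varepsilon}\,n_1)$ ``bad'' vertices $B$ and obtaining good parts $U_1,\ldots,U_r$, Lemma~\ref{lemcri}(i) forces each $U_i$ to be independent; then each $b\in B$ is reinserted one at a time: Lemma~\ref{lemcri}(ii) gives a part $U_i$ where $b$ has few neighbours, and the minimum-degree condition then forces $b$ to have near-full degree to every other $U_j$, so the enlarged partition is again almost-complete and Lemma~\ref{lemcri}(i) re-applies. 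This iterative reinsertion, powered by the minimum-degree condition, is what disposes of the defective vertices and is the piece your sketch does not contain.
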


In 2009, Nikiforov \cite[Theorem 2]{Niki2009ejc} proved the 
corresponding result in terms of the spectral radius.

\begin{theorem}[Nikiforov, 2009] \label{thmcri}
If $F$ is a  graph with a critical edge and $\chi (F)=r+1$ where $r\ge 2$, 
then there exists an $n_0=n_0(F)$ 
such that  
\[  \mathrm{ex}_{\lambda}(n,F)=\lambda (T_r(n))  \]
 holds for all $n\ge n_0$, and 
the unique extremal graph is $T_r(n)$. 
\end{theorem}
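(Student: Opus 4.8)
The plan is to follow the strategy of this paper: derive the exact spectral statement from the exact edge statement (here Simonovits' Theorem~\ref{thmSim66}) by using the spectral stability theorem to bring the extremal graph near $T_r(n)$, and then cleaning up with the help of the color-critical edge and the Perron eigenvector. So let $G$ be an $n$-vertex $F$-free graph with $\lambda(G)=\mathrm{ex}_{\lambda}(n,F)$; since $\chi(F)=r+1>r$, the Tur\'an graph is $F$-free and $\lambda(G)\ge\lambda(T_r(n))\ge(1-\frac{1}{r})n-\frac{r}{4n}$. We may assume $G$ is connected (a largest component would otherwise realize the same spectral radius on $n-o(n)$ vertices, which a routine comparison rules out), so $G$ has a positive Perron eigenvector $\bm{x}$. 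Applying the spectral stability theorem (Theorem~\ref{thm14}) with a tiny parameter, for large $n$ we get $d(G,T_r(n))\le\varepsilon n^2$, i.e.\ a partition $V(G)=V_1\cup\cdots\cup V_r$ with $|V_i|=\frac{n}{r}+o(n)$ and with at most $\varepsilon n^2$ \emph{defect pairs} (edges inside a part, plus non-edges between distinct parts).

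\textbf{Eigenvector and degree regularity.}
Next I would record two standard facts. Comparing $\lambda(G)$ with $\lambda(G-v)$ through the Rayleigh quotient gives $\lambda(G-v)\ge\lambda(G)(1-2x_v^2)$; since $G-v$ is $F$-free, the spectral Erd\H{o}s--Stone--Simonovits theorem (Theorem~\ref{thm13}) bounds $\lambda(G-v)\le(1-\frac{1}{r}+o(1))(n-1)$, and together with $\sum_v x_v^2=1$ this forces every entry of $\bm{x}$ into the range $[(1-o(1))n^{-1/2},(1+o(1))n^{-1/2}]$; the eigenvector is nearly flat. Feeding this into $\lambda x_v=\sum_{u\sim v}x_u\le d(v)\max_w x_w$ yields $\delta(G)\ge(1-\frac{1}{r})n-o(n)$. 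In particular all but $o(n)$ vertices are \emph{typical}, i.e.\ incident to at most $\sqrt{\varepsilon}\,n$ defect pairs.

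\textbf{The cleanup and the main obstacle.}
The core is to promote ``near $T_r(n)$'' to ``equal to $T_r(n)$''. First I would exclude within-part edges between typical vertices: if $ab\in E(G)$ with $a,b$ both typical and both in, say, $V_1$, then fix a proper $r$-colouring $\phi$ of $F-e$, where $e=xy$ is the critical edge; necessarily $\phi(x)=\phi(y)$, since otherwise $\phi$ would $r$-colour $F$. Embedding the colour class of $x$ into $V_1$ with $x\mapsto a$, $y\mapsto b$ and the remaining classes greedily into $V_2,\ldots,V_r$ succeeds because $|V(F)|$ is bounded, the parts have size $\Theta(n)$, and the chosen vertices are typical, so all required cross-edges are present; then $ab\in E(G)$ completes a copy of $F$, a contradiction. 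The delicate part is the $o(n)$ atypical vertices: using the extremality of $\lambda(G)$ and the near-flat eigenvector I would argue that each atypical $v\in V_i$ must actually be joined to all of $\bigcup_{j\ne i}V_j$ and to nothing inside $V_i$, for otherwise rewiring $v$'s neighbourhood to this ``ideal'' one (or moving $v$ to the part minimizing its defects) produces an $F$-free graph with strictly larger spectral radius; making this precise requires careful accounting of the eigenvector weight gained versus lost, and a bootstrapping on the number of defect pairs in the spirit of F\"uredi's and Simonovits' proofs of Theorem~\ref{thmSim66}. Once this is done, $G$ is a complete $r$-partite graph with parts $V_1,\ldots,V_r$: any missing cross-edge could be inserted while keeping the graph $r$-partite (hence $F$-free, as $\chi(F)>r$) and would strictly increase $\lambda(G)$.

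\textbf{Finishing and an alternative route.}
It remains to invoke the short lemma that among all complete $r$-partite graphs on $n$ vertices, $T_r(n)$ is the unique maximizer of the spectral radius (the standard vertex-moving/convexity argument, or the equality case of Lemma~\ref{thmnikiforov}); hence $G=T_r(n)$, and $n_0=n_0(F)$ is whatever threshold the stability and regularity steps demand. I expect the atypical-vertex cleanup to be the real obstacle. An equivalent organisation, which may be cleaner, is to use the same colour-critical embedding to show directly that $G$ is $K_{r+1}$-free; then the spectral Tur\'an theorem (\ref{eqeq2}) gives $\lambda(G)\le\lambda(T_r(n))$, so equality holds throughout and the uniqueness of $T_r(n)$ as the spectral extremal graph for $K_{r+1}$ finishes the proof at once.
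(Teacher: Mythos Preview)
Your overall architecture---apply spectral stability, establish large minimum degree, clean up the partition, conclude $G$ is $r$-partite---matches the paper's. But two of your execution steps have genuine gaps.

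First, the eigenvector-flatness argument does not work. From $\lambda(G-v)\ge\lambda(G)(1-2x_v^2)$ and the bound $\lambda(G-v)\le(1-\tfrac1r+o(1))(n-1)$ you only obtain $1-2x_v^2\le\frac{(1-\frac1r+o(1))n}{\lambda(G)}=1+o(1)$, i.e.\ $x_v^2\ge -o(1)$, which is vacuous: the $o(n)$ slack in Theorem~\ref{thm13} swamps the $O(1)$ gap between $\lambda(G)$ and $\lambda(G-v)$, and $\sum_v x_v^2=1$ does not rescue this. Hence your derivation of $\delta(G)\ge(1-\tfrac1r)n-o(n)$ collapses as well. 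The paper obtains large minimum degree by a different mechanism: Nikiforov's vertex-deletion lemma \cite[Theorem~5]{Niki2008}, which iteratively removes low-degree vertices while tracking $\lambda$, and lands either at a subgraph with $\lambda(G_1)>(1-\tfrac1r+\tfrac{\varepsilon}{4})|G_1|$ (impossible by Theorem~\ref{thm13}) or at one with $\delta(G_1)\ge(1-\tfrac1r-\varepsilon)|G_1|$ and $\lambda(G_1)>\lambda(T_r(|G_1|))$.

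Second, the rewiring step for atypical vertices is not justified: after you make $v$ complete to $\bigcup_{j\ne i}V_j$, the other parts may still contain internal edges contributed by other atypical vertices, so it is unclear the modified graph remains $F$-free; the eigenvector-weight accounting you invoke also rests on the flatness you have not established. The paper sidesteps rewiring entirely. Its Lemma~\ref{lemcri} shows that in any $\varepsilon$-almost-complete $r$-partite subgraph of an $F$-free graph, (i) the parts are actually independent in $G$, and (ii) every outside vertex has few neighbours in some part. Using (ii) together with the minimum-degree bound, each bad vertex is absorbed into the appropriate $U_i$ while the partition stays almost complete, and then (i) forces independence again; iterating exhausts the bad vertices and shows $G$ (or $G_1$) is $r$-partite, whence the spectral Tur\'an theorem finishes. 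Your typical-vertex embedding is essentially Lemma~\ref{lemcri}(i), but you are missing the analogue of (ii) and the minimum-degree input that drive the absorption.
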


Note that Theorem \ref{thmcri} implies Theorem \ref{thmSim66} 
by applying (\ref{eqeqq3}). Indeed, assume that $F$ is a color-critical 
graph with $\chi (F)=r+1$ and $G$ is  $F$-free, Theorem \ref{thmcri} implies that 
for sufficiently large $n$, we have 
$\lambda (G)\le \lambda (T_r(n))$, which together with 
(\ref{eqeqq3}) yields $e(G)\le e(T_r(n))$.

Correspondingly, it was proved by Ma and Qiu \cite{MQ2020} that 
for a color-critical graph $F$ with $\chi (F)=r+1$ and sufficiently large $n$, 
the Tur\'{a}n graph $T_r(n)$ is the unique graph attaining the maximum 
number of copies of $K_s$ in an $n$-vertex $F$-free graph.

In what follows, we escape from the framework of the proof of Nikiforov
and  give an alternative proof 
of Theorem \ref{thmcri} by applying the spectral stability theorem 
(Theorem \ref{thm14}). 
The proof of Nikiforov \cite{Niki2009ejc} 
relies heavily on a series of works stated in the order of \cite{Niki2007laa2,Niki2010dm,Niki2008,BN2007jctb,Niki2008blms}.
Our proof is more transparent and straightforward, we shall  
use some significant ideas and techniques of the 
Szemer\'{e}di regularity lemma although we do not apply 
the regularity lemma directly.

Let $G$ be an $n$-vertex graph and $G'$ 
be an $r$-partite subgraph of $G$ with partition 
$V(G')=U_1\cup U_2 \cup \cdots \cup U_r$. 
Let $\varepsilon >0$ be a sufficiently small number.  
We say that $G'$ is $\varepsilon$-almost complete 
if for any $v\in U_i$ and any $j\neq i$, 
we have $|N(v) \cap U_j| \ge |U_j| -\varepsilon n$.  
In other words, the number of non-neighbors of $v$ in $U_j$ 
is at most $\varepsilon n$.

\begin{lemma} \label{lemcri}
Let $F$ be a color-critical  graph  $\chi (F)=r+1$ where $r\ge 2$. 
Let $\varepsilon >0$ be small enough. 
Let $G$ be an $n$-vertex $F$-free graph 
with $n$ large enough. 
If $G$ contains an $\varepsilon$-almost complete 
$r$-partite subgraph $G'$ with 
$V(G')=U_1\cup U_2 \cup \cdots \cup U_r$, and large enough $|U_i|$,  
then \\
(i) All parts $U_1,U_2,\ldots ,U_r$  are independent sets in $G$. \\
(ii) For every $x\in V(G)\setminus V(G')$, 
there exists a vertex set $U_i$ such that 
$x$ has at most $\varepsilon rt n$ neighbors in $U_i$,  
where $t$ is the number of vertices of $F$.  
\end{lemma}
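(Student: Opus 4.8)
\medskip
\noindent\emph{Proposed proof.}
My plan is to prove both parts by a single greedy embedding of $F$ into $G$ that exploits the almost-complete $r$-partite structure $G'$. The basic tool is the following observation: if we process the vertices of an $r$-colorable graph $H$ on at most $t:=|V(F)|$ vertices in some order, and a vertex $f$ of $H$ is to be mapped into the part $U_j$ assigned to its color class, then $f$ has at most $t$ already-embedded neighbours; by $\varepsilon$-almost completeness each such neighbour, lying in a part other than $U_j$, forbids at most $\varepsilon n$ candidates in $U_j$, and at most $t$ further candidates are excluded as already used. Hence for $\varepsilon$ small (in terms of $F$) and $n,\min_i|U_i|$ large there is always an admissible image, so $H$ embeds into $G'$ respecting the color-class-to-part assignment; the same argument survives if we first pin a bounded number of vertices to prescribed images, provided each pinned image is adjacent in $G$ to many vertices of every part it must reach. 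To set this up I would unpack color-criticality: fix a critical edge $e=ab$ of $F$; since $r=\chi(F)-1\le\chi(F-e)<\chi(F)=r+1$, we get $\chi(F-e)=r$, so $F-e$ has a proper $r$-coloring with nonempty classes $W_1,\dots,W_r$, and since re-adding $e$ raises the chromatic number, $a$ and $b$ lie in a common class, say $W_1$. Note also that $F-a$ equals the induced subgraph $(F-e)[V(F)\setminus\{a\}]$, hence is $r$-colorable with classes $W_1\setminus\{a\},W_2,\dots,W_r$.

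For part (i), suppose some part $U_i$ contains an edge $xy$ of $G$. Assign $W_1$ to $U_i$ and $W_2,\dots,W_r$ bijectively to the remaining parts, and run the embedding with the extra instructions $a\mapsto x$ and $b\mapsto y$. The only edge of $F$ not inside $F-e$ is $e=ab$, whose image $xy$ is an edge of $G$ by assumption; every other edge of $F$ joins two distinct color classes, hence two distinct parts, and is realized because the images were chosen to respect $\varepsilon$-almost completeness. The count above applies (each already-embedded neighbour costs at most $\varepsilon n$ candidates, at most $t$ are used) since each $|U_i|$ is large, so we obtain a copy of $F$ in $G$, contradicting $F$-freeness. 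Hence every $U_i$ is independent.

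For part (ii), fix $x\in V(G)\setminus V(G')$ and suppose, for contradiction, that $|N_G(x)\cap U_i|>\varepsilon rtn$ for every $i\in[r]$. Assign $W_1\setminus\{a\}$ to $U_1$ and $W_\ell$ to $U_\ell$ for $2\le\ell\le r$, pin $a\mapsto x$, and embed the remaining vertices of $F-a$ greedily. When placing a vertex $f$ in its part $U_j$: if $f$ is adjacent to $a$ in $F$, its image must lie in $N_G(x)\cap U_j$, a set of size more than $\varepsilon rtn$, of which at most $t\varepsilon n$ are forbidden by the other already-embedded neighbours of $f$ and at most $t$ are used, leaving more than $\varepsilon rtn-t\varepsilon n-t=t(\varepsilon n(r-1)-1)>0$ choices since $r\ge2$ and $n$ is large; if $f$ is not adjacent to $a$, the same estimate holds with $|U_j|$ in place of $\varepsilon rtn$. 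Then all edges of $F$ are realized — those inside $F-a$ by $\varepsilon$-almost completeness, and those incident to $a$ because the image of each neighbour of $a$ was forced into $N_G(x)$ — giving a copy of $F$ in $G$, a contradiction. This proves (ii).

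The part I expect to be most delicate is purely organisational: arranging the quantifiers so that ``$\varepsilon$ small'' and ``$n,\min_i|U_i|$ large'' are simultaneously satisfiable, and checking that each greedy step leaves a strictly positive number of admissible images. The one genuinely structural ingredient is color-criticality, which is exactly what lets us absorb the critical edge $e$ inside a single part in (i) and detach a single vertex $a$ so that $F-a$ becomes $r$-partite in (ii); without it neither embedding would have room to begin.
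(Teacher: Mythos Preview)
Your proof is correct and follows essentially the same greedy-embedding strategy as the paper: both exploit $\varepsilon$-almost completeness to show that at each placement step only $O(t\varepsilon n)$ candidates are forbidden, leaving room to continue. The sole cosmetic difference is that the paper first builds a full $K_r(t)$ (containing the offending edge $uv$ in part~(i), or lying entirely inside $N(x)$ in part~(ii)) and then observes that $F$ embeds in the resulting structure, whereas you embed $F$ vertex-by-vertex directly; the underlying counting is identical.
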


\begin{proof}
(i) Since  $F$ is a  graph with a critical edge and $\chi (F)=r+1$, 
there is an edge $\{x,y\}\in E(F)$  
such that $F\setminus \{x,y\}$ is a subgraph of 
the complete $r$-partite graph $K_{t,t,\ldots ,t}=K_r(t)$. 
Without loss of generality, we may assume on the contrary that 
$U_1$ contains an edge $\{u,v\}$. 
We choose a vertex set $T_1 \subseteq U_1$ satisfying $|T_1|=t$ and 
$\{u,v\} \subseteq T_1$.  Suppose we have obtained 
sets $T_1,T_2,\ldots ,T_i$ such that 
$|T_i|=t$ and $T_1,T_2,\ldots ,T_i$ form a complete $i$-partite graph 
$K_{i}(t)$.  Note that $G'$ is $\varepsilon$-almost complete, 
so every $x\in T_1\cup \cdots \cup T_i$ misses at most $\varepsilon 
n$ vertices in $U_{i+1}$, i.e., excepting at most 
$it\varepsilon n$ vertices of $U_{i+1}$, the remaining vertices 
of $U_{i+1}$ are adjacent to every vertex of $T_1\cup \cdots \cup T_i$. 
Observe that $|U_{i+1}| - it \varepsilon n \ge t$. 
Hence there exists a set $T_{i+1} \subseteq U_{i+1}$ with 
$|T_{i+1}| =t $ 
such that the vertex sets $T_1, \ldots , T_i,T_{i+1}$ 
form a complete $(i+1)$-partite graph $K_{i+1}(t)$. 
We can proceed this operation until we find $r$ sets 
$T_1,T_2,\ldots ,T_{r}$ satisfying $|T_i|=t$ and they 
form a complete $r$-partite graph. Note that $\{u,v\}$ is an edge in 
$T_1$, it corresponds to the edge $\{x,y\}$ of $F$. 
Thus $F $ is contained in $G'$, and so it is contained in $G$, 
a contradiction. 

(ii) Suppose that $x$ has at least $\varepsilon rtn$ neighbors 
in  set $U_i$ for every $i=1,2,\ldots ,r$. 
Since $F$ is a fixed graph on $t$ vertices, 
we know that $\varepsilon rtn \ge t$ holds for fixed small $\varepsilon $ and 
then sufficiently large $n$. 
Setting $U_i' =N(x)\cap U_i$ for every $i=1,2,\ldots ,r$. 
Firstly, we choose 
a set $T_1 \subseteq U_1'$ with $|T_1| =t$. 
Suppose that we have find sets $T_1,T_2,\ldots ,T_i$ 
satisfying $T_i \subseteq U_i', |T_i|=t$ and 
$T_1,T_2,\ldots ,T_i$ form a complete $i$-partite graph $K_i(t)$.  
Observe that every $v\in T_1 \cup \cdots \cup T_i$ misses at most 
$\varepsilon n $ vertices of $U_{i+1}$, so it misses at most $\varepsilon n$ vertices of $U_{i+1}'$. 
Note that $|U_{i+1}'| \ge rt \varepsilon n$, hence for sufficiently large $n$, 
we can find  a set $T_{i+1} \subseteq U_{i+1}'$ such that $|T_{i+1}|=t$ and 
every vertex of $T_{i+1}$ is adjacent to every vertex of $T_1\cup \cdots \cup T_i$. By repeating this process, 
 we can find $r$ sets $T_1,T_2,\ldots ,T_r $ such that 
$T_i \subseteq U_i', |T_i|=t$ and 
$T_1,T_2,\ldots ,T_r$ form a complete $r$-partite graph.  
Observe that $F$ is contained in the 
complete $(r+1)$-partite graph formed by $\{x\},T_1,\ldots ,T_r$, 
so it is contained in $G$, 
a contradiction. 
\end{proof}

We now start our new proof of Theorem \ref{thmcri}.

\begin{proof}[Alternative proof of Theorem \ref{thmcri}]
Let $F$ be a graph with a critical edge and $\chi (F)=r+1$. 
Assume that $G$ is an $n$-vertex $F$-free graph with 
$\lambda (G)\ge \lambda (T_r(n))$, our goal is to show 
$G=T_r(n)$.  
Set $\varepsilon >0$ as a sufficiently small constant.  
First of all, we claim that $\delta (G) \ge (1-\frac{1}{r} -\varepsilon )n$. 
{\it Otherwise, if there exists a vertex with degree less than $(1-\frac{1}{r} - \varepsilon )n$, } 
then by a standard argument of successively-vertex deletion \cite[Theorem 5]{Niki2008}, 
we  obtain  a subgraph $G_1 $ of $G$ 
satisfying $|G_1|=n_1\ge n/2$ vertices  and one of the following conditions: \\ 
(a) $\lambda (G_1) > (1-\frac{1}{r} + \frac{\varepsilon}{4})n_1$; or \\ 
(b) $\delta (G_1) \ge (1- \frac{1}{r} - \varepsilon )n_1$ and
  $\lambda (G_1)>  \lambda (T_r(n_1))$. 
  
  If Condition (a) happens, then we can see that $G_1$ contains $F$ as a subgraph 
  by applying  Theorem \ref{thm13} for sufficiently large $n$. 
 In what follows, we consider Condition (b). 
 In other words, $G$ has a subgraph $G_1$ on $n_1>n/2$ vertices with  
 $\lambda (G_1)> \lambda (T_r(n_1))$ and 
 the minimum degree $\delta (G_1)\ge (1-\frac{1}{r} - \varepsilon )n_1$.

 Note that $\lambda (T_r(n_1))\ge (1-\frac{1}{r})n_1 - \frac{r}{4n_1}$. 
 By the spectral stability Theorem \ref{thm14}, 
we have $d(G_1,T_r(n_1))\le \varepsilon n_1^2$. 
Hence there exists an $r$-partition  of the vertex set of $G_1$, 
say $V(G_1)=V_1 \cup V_2 \cup \cdots \cup V_r$ such that 
$|V_i|=\lfloor n_1/r\rfloor$ or $\lceil  n_1/r\rceil$ for every $i=1,2,\ldots ,r$, 
and 
$ \sum_{i=1}^r e(G_1[V_i]) + e(T_r(n_1)) - e(G_1[V_1,V_2,\ldots ,V_r]) \le \varepsilon n_1^2$, 
where $G_1[V_i]$ is the  subgraph of 
$G_1$ induced by $V_i$ and $G_1[V_1,V_2,\ldots ,V_r]$ 
is the $r$-partite subgraph of $G_1$ formed between sets $V_1,V_2, \ldots ,V_r$. Note that $e(T_r(n_1)) - e(G_1[V_1,V_2,\ldots ,V_r])$ 
is the number of edges of $T_r(n)$ missing from  $G_1$, 
i.e., the pair $\{x,y\}$ with $x\in V_i,y\in V_j,i\neq j$ and 
$\{x,y\}\notin E(G_1)$. 
For every $i=1,2,\ldots ,r$, we define 
$B_i$ as the set of vertices of $V_i$ missing at least $\sqrt{\varepsilon }n_1$ 
edges from some $V_j$, that is, $B_i=\{v\in V_i: |N(v)\cap V_j| \le |V_j|- \sqrt{\varepsilon}n_1 ~\text{for some $j\neq i$}\}$. 
 We call such vertex a  bad vertex and 
denote $B=\cup_{i=1}^r B_i$.  Then $|B|\le \frac{2\varepsilon n_1^2}{
\sqrt{\varepsilon}n_1}=2\sqrt{\varepsilon}n_1$.  
Let $U_i=V_i \setminus B_i$ for every $i=1,\ldots ,r$. 
Then $|U_i|\ge |V_i| - |B| > \frac{n_1}{r} - 3\sqrt{\varepsilon}n_1$. 
For every $v\in U_i$ and $j\neq i$, 
we have $|N(v)\cap U_j|\ge |N(v)\cap V_j| - |B| 
\ge (|V_j| - \sqrt{\varepsilon}n_1 ) -2 \sqrt{\varepsilon}n_1 
\ge |U_j| - 3\sqrt{\varepsilon}n_1  $. 
Thus $G_1[U_1,U_2,\ldots ,U_r]$ is a $3\sqrt{\varepsilon}$-almost 
complete $r$-partite subgraph of $G_1$. 
By (i) of Lemma \ref{lemcri}, we know that 
$U_1,U_2,\ldots ,U_r$ are independent sets. 

If $B$ is non-empty, 
then for each bad vertex $x\in B$, by (ii) of Lemma \ref{lemcri}, 
there exists a set $U_i$ such that $|N(x)\cap U_i| \le 
3\sqrt{\varepsilon} rtn_1$. 
Now we add the bad vertex $x$ into the set $U_i$, 
and get a new $r$-partition, say $U_1', \ldots ,U_i', \ldots ,U_r'$, 
where $U_i'=U_i \cup \{x\}$ and other $U_j'=U_j$ for $j\neq i$.   
We claim that the new partition $G_1[U_1', \ldots ,U_r']$ 
is $(7 \sqrt{\varepsilon}rt)$-almost complete. 
Note that $\delta (G_1) \ge (1-\frac{1}{r} - \varepsilon)n_1$, 
so we have 
$|N(x) \cap (\cup_{j\neq i} U_j)| 
\ge d(x) - |B| - |N(x)\cap U_i|  
 \ge (1-\frac{1}{r} - \varepsilon)n_1 - 2\sqrt{\varepsilon}n_1 
- 3\sqrt{\varepsilon}rtn_1   
 \ge (1-\frac{1}{r}) n_1 - 6\sqrt{\varepsilon} rtn_1$. 
 Note that $|\cup_{j\neq i} U_j| \le |\cup_{j\neq i} V_j| \le n- 
 \lfloor \frac{n_1}{r}\rfloor < (1-\frac{1}{r})n_1 + \sqrt{\varepsilon}rtn_1$ 
 for sufficiently large $n_1$.   Thus we obtain 
 $|N(x)\cap (\cup_{j\neq i} U_j)| \ge |\cup_{j\neq i} U_j| - 
 7 \sqrt{\varepsilon}rtn_1$. 
 So the number of non-neighbors of $x$ in set $\cup_{j\neq i} U_j$ 
 is at most $7 \sqrt{\varepsilon}rtn_1$, 
 which implies that the number of non-neighbors of $x$ in 
each $U_j$ with $j\neq i$ is at most $7 \sqrt{\varepsilon}rtn_1$. 
Thus we get $|N(x)\cap U_j| \ge |U_j| - 7 \sqrt{\varepsilon}rtn_1$ 
for every $j\neq i$. So we complete the proof of our claim. 
By (i) of Lemma \ref{lemcri}, we know that 
$U_1',U_2',\ldots ,U_r'$ are still independent sets. 

If $B\setminus \{x\}$ is non-empty, 
then for each bad vertex $y\in B\setminus \{x\}$, by (ii) of Lemma \ref{lemcri}, 
there exists a set $U_i'$ such that $|N(y)\cap U_i'| \le 
(7\sqrt{\varepsilon} rt) \cdot rtn_1$. 
We add the vertex $y$ to the set $U_i'$, 
and obtain a new $r$-partition. 
Repeating the above process, we can keep adding all bad vertices 
of $B$ into our $r$-partition until $B=\varnothing$. 
At the end of our process, we conclude that $G_1$ is 
an $r$-partite graph. Certainly, $G_1$ is $K_{r+1}$-free, the spectral Tur\'{a}n theorem implies $\lambda (G_1) \le \lambda (T_r(n_1))$. 
Recall that Condition (b) states that $\lambda (G_1) > \lambda (T_r(n_1))$, 
so we get a contradiction.

Hence, there are no vertices of $G$ with degree less than $(1-\frac{1}{r}-\varepsilon )n$. 
We conclude that $G$ is an $F$-free graph on $n$ vertices with 
$\lambda (G)\ge \lambda (T_r(n))$ and $\delta (G) \ge (1-\frac{1}{r} -\varepsilon )n$. 
Now, replacing $G_1$ with $G$, and repeating the above  discussions, 
we can show that $G$ is an $r$-partite graph. 
Keeping in mind that $\lambda(G)\ge \lambda (T_r(n))$, 
we obtain that $G$ is a balanced complete $r$-partite graph on 
$n$ vertices. 
\end{proof}

Our proof in above  is a standard graph structure analysis.  
It is worth noting that one can prove Theorem \ref{thmSim66} by modifying slightly the above proof 
of the spectral version. 
Generally speaking, the stability method has two steps. 
First one need to prove a stability theorem, i.e., any construction of close to 
maximum size is structurally close to the conjectured extremal graph. 
Armed with this approximate  structure, 
we can consider any supposed better construction as being obtained from the  extremal example by introducing a small number of  imperfections into the structure. 
The second step is to analyze any possible imperfection and show that it must  lead to a suboptimal configuration, so in fact the conjectured extremal example must be optimal; 
see \cite{Keevash11,MY2018} for similar examples.

\subsection{Making $K_{r+1}$-free graphs $r$-partite}

In this subsection, we shall study the problem of 
how many edges needed to be removed in a $K_{r+1}$-free graph to 
make its being $r$-partite. 
For integer $r\ge 2$, let $D_r(G)$ denote the minimum number of edges 
which need to be removed to make $G$ being $r$-partite. 
For cliques, 
the Erd\H{o}s--Simonovits theorem \cite{Sim1966} 
 states that for every $\varepsilon >0$, there exist $\delta >0$ and $n_0$ 
such that if $G$ is a $K_{r+1}$-free graph on $n\ge n_0$ vertices and 
$e(G)\ge e(T_r(n)) -\delta n^2$, then $D_r(G)\le \varepsilon n^2$. 
In 2015, F\"{u}redi \cite{Fur2015} provided an elegant proof of the result  
that every $K_{r+1}$-free  graph $G$ on $n$ vertices 
with at least $e(T_r(n))- t$ edges satisfies $D_r(G)\le t$. 
This provided a quantitative improvement of 
the Erd\H{o}s--Simonovits theorem. 
Very recently, Balogh, Clristian, Lavrov, Lidick\'{y} and Pfender \cite{BCLLP2021}
 determined asymptotically a sharp bound on the number of edges that are needed 
 for small $t\in \mathbb{N}^*$.

\begin{theorem}[Balogh--Clristian--Lavrov--Lidick\'{y}--Pfender, 2021] \label{thmBal}
Let $r\ge 2$ be an integer. 
For all $n\ge 3r^2$ and $0\le \delta \le 10^{-7}r^{-12}$, 
the following holds. If $G$ is a $K_{r+1}$-free graph on 
$n$ vertices with $e(G)\ge e(T_r(n)) - \delta n^2$, 
then $D_r(G)\le \bigl( \frac{2r}{3\sqrt{3}} + o_{\delta}(1) \bigr) 
\delta^{3/2}n^2$, 
where $o_{\delta}(1)$ is a term converging to $0$ as $\delta$ 
tending to $0$. 
\end{theorem}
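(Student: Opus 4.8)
The plan is to bootstrap from the qualitative stability picture to the sharp quantitative bound. Since $\chi(K_{r+1})=r+1$, F\"uredi's theorem \cite{Fur2015} already gives $D_r(G)\le\delta n^2$, and the Erd\H{o}s--Simonovits stability theorem (Theorem \ref{thm12}) shows that $G$ is $o_\delta(n^2)$-close to the balanced Tur\'an graph. I would fix a partition $V(G)=V_1\cup\cdots\cup V_r$ minimising $\sum_i e(G[V_i])$, so that $\sum_i e(G[V_i])=D_r(G)=:\beta$, the partition is locally optimal ($d_{V_i}(v)\le d_{V_j}(v)$ for all $v\in V_i$, $j\ne i$), and, being close to a balanced blow-up, it is nearly balanced, $\bigl||V_i|-n/r\bigr|=o_\delta(n)$; the hypothesis $\delta\le 10^{-7}r^{-12}$ is what keeps every error term negligible throughout. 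Writing $M=\sum_{i<j}\bigl(|V_i||V_j|-e(G[V_i,V_j])\bigr)$ for the number of missing cross-edges and using $\sum_{i<j}|V_i||V_j|\le e(T_r(n))$ with near-equality, the hypothesis $e(G)\ge e(T_r(n))-\delta n^2$ gives $M\le\delta n^2+\beta+o_\delta(n^2)\le(2+o_\delta(1))\delta n^2$; once the estimate below shows $\beta=O_r(\delta^{3/2}n^2)$, re-substituting improves this to $M\le(1+o_\delta(1))\delta n^2$. For $v$ in class $V_i$ I set the \emph{defect} $b(v):=\sum_{j\ne i}\bigl(|V_j|-|N(v)\cap V_j|\bigr)$, the number of missing cross-edges at $v$; then $\sum_v b(v)=2M$, and local optimality gives $(r-1)d_{V_i}(v)\le(n-|V_i|)-b(v)$.

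The crux is a local estimate showing inside-edges are expensive: for every $uv\in E(G[V_i])$ one should have $b(u)+b(v)\ge(1-o_\delta(1))\tfrac nr$. The point is that $K_{r+1}$-freeness forces $N(u)\cap N(v)\cap\bigcup_{j\ne i}V_j$ to be $K_{r-1}$-free; since this set meets each $V_j$ ($j\ne i$) in a piece of size at least $|V_j|-b(u)-b(v)$, if those pieces were large and the induced graph between them were close to complete $(r-1)$-partite, a greedy transversal argument exactly as in the proof of Lemma \ref{lemcri} would produce a $K_{r-1}$, hence a $K_{r+1}$ together with $u$ and $v$. A Tur\'an-type count of the missing edges this forces then yields $\bigl(|V_j|-b(u)-b(v)\bigr)_+^2\lesssim M$ for some $j\ne i$, and since $|V_j|=(1+o_\delta(1))n/r$ and $M=o(n^2)$, this forces $b(u)+b(v)\ge(1-o_\delta(1))\tfrac nr$.

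With this in hand I would bound $\beta$ by optimising over the defect profile. Every inside-edge has an endpoint of defect $\ge(\tfrac12-o_\delta(1))\tfrac nr$, so the inside-edges form an essentially bipartite incidence between ``low-defect'' and ``high-defect'' vertices; feeding in the edge constraint $b(u)+b(v)\ge(1-o_\delta(1))\tfrac nr$, the degree constraint $(r-1)d_{V_i}(v)\le(n-|V_i|)-b(v)$, and the budget $\sum_v b(v)=2M$ leads to a convex optimisation whose extremal configuration is a complete bipartite graph between a set of $\approx t$ low-defect vertices of defect $\approx t$ and a high-defect set. This is precisely the pentagon-type gadget inside a near-extremal triangle-free graph on $\approx 2n/r$ vertices that underlies the matching construction: $r-2$ classes complete to everything, joined to a near-extremal triangle-free graph. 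For $r=2$ the optimisation amounts to maximising $2tM-2t^3$, with maximum $\tfrac{4}{3\sqrt3}M^{3/2}$ at $t=\sqrt{M/3}$ (and $\tfrac{2}{3\sqrt3}=\max_{0\le x\le1}x\sqrt{1-x}$ surfaces here); the general $r$ case is analogous with $r$-dependent coefficients, and tracking them gives $\beta\le\bigl(\tfrac{2r}{3\sqrt3}+o_\delta(1)\bigr)\tfrac{M^{3/2}}{n}$. Combining with the improved $M\le(1+o_\delta(1))\delta n^2$ yields $D_r(G)=\beta\le\bigl(\tfrac{2r}{3\sqrt3}+o_\delta(1)\bigr)\delta^{3/2}n^2$.

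The hard part is making the last two steps sharp rather than merely order-correct: one must pin down the Tur\'an deficit inside the common neighbourhoods $\bigcup_{j\ne i}P_j$ precisely, not just up to constants, and one must cope with the fact that the defect budgets of different classes are all coupled through the \emph{same} set of $M$ missing edges, so the per-class estimates cannot simply be summed. Controlling every $o_\delta(1)$ error term along the way is exactly what forces the tiny range $\delta\le 10^{-7}r^{-12}$, and establishing near-balance of the minimiser partition at the outset is where the stability theorem is genuinely needed.
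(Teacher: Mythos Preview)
The paper does not prove this statement. Theorem~\ref{thmBal} is quoted from Balogh, Clemen, Lavrov, Lidick\'{y} and Pfender \cite{BCLLP2021} and is used in this paper purely as a black box: the authors invoke it together with Lemma~\ref{thmnikiforov} and Lemma~\ref{lemFR} to deduce the spectral version (Theorem~\ref{thmthm35}) and the clique version (Theorem~\ref{thm312}). There is therefore no ``paper's own proof'' to compare your proposal against.

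That said, what you have sketched is a plausible outline of how the original \cite{BCLLP2021} argument is organised: fix a minimum-cut $r$-partition, show near-balance via stability, introduce the cross-defect $b(v)$, prove that each inside-edge forces large total defect on its endpoints by finding a $K_{r-1}$ in the common neighbourhood otherwise, and then optimise the number of inside-edges against the total defect budget $\sum_v b(v)=2M$. You are also right that the pentagon-type construction is the extremiser for this optimisation. Be aware, though, that your write-up is only a plan: the step ``a Tur\'an-type count of the missing edges this forces then yields $\bigl(|V_j|-b(u)-b(v)\bigr)_+^2\lesssim M$ for some $j\ne i$'' and the subsequent ``convex optimisation'' are asserted, not carried out, and getting the exact constant $\tfrac{2r}{3\sqrt{3}}$ (rather than some $C_r$) requires doing both of these precisely. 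For the purposes of the present paper none of this is needed; one simply cites \cite{BCLLP2021}.
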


In the sequel, we  present the spectral version and 
clique version of Theorem \ref{thmBal}. 

\begin{theorem} \label{thmthm35}
Let $r\ge 2, n\ge 3r^2$ and $0\le \delta \le 10^{-7}r^{-12}$. 
If $G$ is an $n$-vertex $K_{r+1}$-free graph such that 
$\lambda(G)\ge (1- \frac{1}{r} - \delta) n$, 
then $D_r(G)\le \bigl( \frac{2r}{3\sqrt{3}} + o_{\delta}(1) \bigr) 
\delta^{3/2}n^2$. 
\end{theorem}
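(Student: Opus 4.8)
The plan is to reduce Theorem \ref{thmthm35} to Theorem \ref{thmBal} by passing from the spectral hypothesis $\lambda(G)\ge (1-\frac 1r-\delta)n$ to the edge hypothesis $e(G)\ge e(T_r(n))-\delta' n^2$ for a slightly worse parameter $\delta'$, and then checking that $\delta'$ is still small enough for Theorem \ref{thmBal} to apply and that the resulting bound on $D_r(G)$ matches the claimed one up to the $o_\delta(1)$ term. First I would invoke the remark following Lemma \ref{thmnikiforov}: since $G$ is $K_{r+1}$-free with $m=e(G)$ edges, $\lambda(G)^2\le 2m(1-\frac1r)$. Combined with the hypothesis $\lambda(G)\ge (1-\frac1r-\delta)n$ this gives
\[
  2m\Bigl(1-\frac1r\Bigr)\ge \lambda(G)^2\ge \Bigl(1-\frac1r-\delta\Bigr)^2 n^2,
\]
hence $m\ge \frac{(1-\frac1r-\delta)^2}{2(1-\frac1r)}n^2$. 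Writing $1-\frac1r=\frac{r-1}{r}$ and expanding, $\bigl(1-\frac1r-\delta\bigr)^2=\bigl(1-\frac1r\bigr)^2-2\delta\bigl(1-\frac1r\bigr)+\delta^2$, so
\[
  m\ge \Bigl(1-\frac1r\Bigr)\frac{n^2}{2}-\delta n^2+\frac{\delta^2 n^2}{2(1-\frac1r)}\ge \Bigl(1-\frac1r\Bigr)\frac{n^2}{2}-\delta n^2.
\]
Since $e(T_r(n))\le (1-\frac1r)\frac{n^2}{2}$, this yields $e(G)\ge e(T_r(n))-\delta n^2$, i.e.\ exactly the edge hypothesis of Theorem \ref{thmBal} with the \emph{same} $\delta$.

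Having done this, the second step is purely bookkeeping: the constraints $r\ge 2$, $n\ge 3r^2$ and $0\le\delta\le 10^{-7}r^{-12}$ are identical to those in Theorem \ref{thmBal}, so that theorem applies verbatim to $G$ and gives $D_r(G)\le\bigl(\frac{2r}{3\sqrt3}+o_\delta(1)\bigr)\delta^{3/2}n^2$, which is precisely the desired conclusion. So in fact no loss at all is incurred in $\delta$: the extra term $\frac{\delta^2 n^2}{2(1-1/r)}$ in the displayed inequality only \emph{helps}, because it makes $e(G)$ even larger than $e(T_r(n))-\delta n^2$. One could therefore even apply Theorem \ref{thmBal} with a marginally smaller effective $\delta$, but since the bound is monotone in $\delta$ and the statement already contains an $o_\delta(1)$, keeping the same $\delta$ is cleanest.

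The only point requiring a modicum of care — and the closest thing to an obstacle — is the comparison between $e(T_r(n))$ and $(1-\frac1r)\frac{n^2}{2}$: one must make sure the gap, which is at most $\frac r8$ by the estimate quoted after \eqref{eqTuran}, is absorbed into the $\delta n^2$ slack, i.e.\ that $\frac r8\le$ (something negligible compared to $\delta n^2$ or already covered). In fact the inequality $e(G)\ge (1-\frac1r)\frac{n^2}{2}-\delta n^2\ge e(T_r(n))-\delta n^2$ holds because $e(T_r(n))\le(1-\frac1r)\frac{n^2}{2}$ with no further condition, so this is automatic and no large-$n$ assumption beyond $n\ge 3r^2$ is needed. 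Thus the whole proof is a two-line deduction: apply the Nikiforov bound of Lemma \ref{thmnikiforov} to convert the spectral condition into an edge condition with the same $\delta$, then quote Theorem \ref{thmBal}. An entirely analogous argument, using Lemma \ref{lemFR} in place of Lemma \ref{thmnikiforov} to pass from $k_s(G)\ge\binom rs(\frac nr)^s-\delta n^s$ to a lower bound on $e(G)=k_2(G)$, would give the clique version promised in the sentence preceding the theorem.
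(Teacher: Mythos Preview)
Your proposal is correct and follows essentially the same argument as the paper: apply the Nikiforov inequality from Lemma \ref{thmnikiforov} to convert the spectral hypothesis into $e(G)\ge e(T_r(n))-\delta n^2$, then invoke Theorem \ref{thmBal} directly. Your write-up is in fact slightly more careful about the algebra (expanding $(1-\tfrac1r-\delta)^2$ and noting the helpful $\delta^2$ term) than the paper's two-line version, but the route is identical.
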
 

\begin{proof}
Since $G$ is $K_{r+1}$-free, 
applying the remark of Lemma \ref{thmnikiforov}, we have 
$\lambda (G)^2 \le (1-\frac{1}{r}) 2e(G)$. Thus we have 
$e(G)\ge \frac{1}{2}\lambda (G)^2/ (1-\frac{1}{r}) \ge  \frac{n^2}{2}
(1-\frac{1}{r} -\delta )^2 / (1-\frac{1}{r}) 
> \frac{n^2}{2}(1-\frac{1}{r}- 2\delta) \ge e(T_r(n)) -\delta n^2$. The desired result  follows immediately from Theorem \ref{thmBal}. 
\end{proof}

\begin{theorem} \label{thm312}
Given $r\ge 2 $ and $s\ge 2$. Let $n$ be  large and $\delta >0$ be sufficiently small. 
If $G$ is a graph on $n$ vertices, 
 and $G$ is $K_{r+1}$-free such that 
$k_s(G)\ge {r\choose s}(\frac{n}{r})^s -\delta n^s$, 
 then  $D_r(G) \le \bigl( \frac{2r}{3\sqrt{3}} + o_{\delta}(1) \bigr)  {r \choose 2}^{3/2}/ {r \choose s}^{3/s} \cdot \delta^{3/s} n^2$. 
\end{theorem}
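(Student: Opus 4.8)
The plan is to reduce Theorem~\ref{thm312} to Theorem~\ref{thmBal}, mimicking the proof of Theorem~\ref{thmthm35} almost verbatim; the only change is that we convert the clique count into an edge count through Lemma~\ref{lemFR} (the Khadzhiivanov--S\'{o}s--Straus inequality) rather than through the remark following Lemma~\ref{thmnikiforov}. Since $G$ is already $K_{r+1}$-free, no regularity or removal step is needed. First I would apply Lemma~\ref{lemFR} to $G$, which in particular gives $\bigl(k_2(G)/{r \choose 2}\bigr)^{1/2}\ge\bigl(k_s(G)/{r \choose s}\bigr)^{1/s}$, that is,
\[ e(G) \;\ge\; {r \choose 2}\left( \frac{k_s(G)}{{r \choose s}} \right)^{2/s}. \]

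Next I would substitute the hypothesis $k_s(G)\ge {r \choose s}(n/r)^s-\delta n^s$ (legitimate because $t\mapsto t^{2/s}$ is increasing), and then invoke the subadditivity of $t\mapsto t^{2/s}$ on $[0,\infty)$, which holds since $2/s\le 1$ as $s\ge 2$: with $a=(n/r)^s$ and $b=\delta n^s/{r \choose s}$ one has $(a-b)^{2/s}\ge a^{2/s}-b^{2/s}$ provided $a\ge b\ge 0$, i.e.\ provided $\delta\le {r \choose s}r^{-s}$, which is automatic once $\delta$ is small in terms of $r$ and $s$. This peels off the error term and, using ${r \choose 2}(n/r)^2=(1-\tfrac{1}{r})\tfrac{n^2}{2}\ge e(T_r(n))$, leaves
\[ e(G) \;\ge\; e(T_r(n)) - \delta_0 n^2, \qquad \delta_0 := {r \choose 2}\,{r \choose s}^{-2/s}\,\delta^{2/s}. \]

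For $\delta$ sufficiently small (in terms of $r$ and $s$) we then have $0\le \delta_0\le 10^{-7}r^{-12}$, and for $n$ large we have $n\ge 3r^2$; hence Theorem~\ref{thmBal} applies to $G$ and yields $D_r(G)\le\bigl(\tfrac{2r}{3\sqrt{3}}+o_{\delta_0}(1)\bigr)\delta_0^{3/2}n^2$. It then remains only to compute $\delta_0^{3/2}={r \choose 2}^{3/2}\,{r \choose s}^{-3/s}\,\delta^{3/s}$ and to note that $\delta_0\to 0$ as $\delta\to 0$, so $o_{\delta_0}(1)=o_\delta(1)$; this is exactly the claimed bound $D_r(G)\le\bigl(\tfrac{2r}{3\sqrt{3}}+o_\delta(1)\bigr){r \choose 2}^{3/2}/{r \choose s}^{3/s}\cdot\delta^{3/s}n^2$. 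The argument is essentially mechanical, with Theorem~\ref{thmBal} supplying all the structural content, so I do not expect a genuine obstacle; the only points needing a little care are verifying the nonnegativity condition $\delta\le {r \choose s}r^{-s}$ that licenses the subadditivity step (this is what pins down the quantitative smallness threshold on $\delta$) and tracking the exponents carefully so that the power $\delta^{3/s}$ and the constants ${r \choose 2}^{3/2}$, ${r \choose s}^{-3/s}$ come out precisely as stated.
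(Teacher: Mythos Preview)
Your proposal is correct and follows essentially the same approach as the paper: apply Lemma~\ref{lemFR} to convert the $K_s$-count lower bound into an edge lower bound of the form $e(G)\ge e(T_r(n))-{r\choose 2}{r\choose s}^{-2/s}\delta^{2/s}n^2$, then invoke Theorem~\ref{thmBal} with $\delta_0={r\choose 2}{r\choose s}^{-2/s}\delta^{2/s}$. You are in fact more careful than the paper in explicitly justifying the subadditivity inequality $(a-b)^{2/s}\ge a^{2/s}-b^{2/s}$ and the smallness condition on $\delta$ it requires.
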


\begin{proof}
Note that $G$ is $K_{r+1}$-free.  
Applying  Lemma \ref{lemFR}, we get 
$e(G)\ge {r \choose 2} ( {k_s(G)}/{ {r \choose s} })^{2/s}$, 
which together with the assumption yields 
$e(G)\ge {r\choose 2} \left( (\frac{n}{r})^s - \delta n^s/ {r \choose s}  \right)^{2/s} 
\ge {r \choose 2}(\frac{n}{r})^2 -  {r \choose 2}/ {r \choose s}^{2/s} 
\cdot \delta^{2/s} n^2 \ge e(T_r(n)) -  { r \choose 2} / {{r \choose s}^{2/s}} \cdot \delta^{2/s} n^2$. 
By Theorem \ref{thmBal}, we obtain the required result 
$D_r(G) \le \bigl( \frac{2r}{3\sqrt{3}} + o_{\delta}(1) \bigr)  {r \choose 2}^{3/2}/ {r \choose s}^{3/s} \cdot \delta^{3/s} n^2$. 
\end{proof}

\noindent 
{\bf Remark.} 
The Rayleigh formula gives $2e(G)/ n \le \lambda (G)$. 
Thus Theorem \ref{thmthm35} extends  Theorem \ref{thmBal} slightly. 
In particular, the case $s=2$ in Theorem \ref{thm312} reduces to 
Theorem \ref{thmBal}. In addition, 
Theorem \ref{thm312} is an improvement of a 
recent result of Liu \cite[Theorem 4.1]{Liu2021}.

\medskip 

For a positive integer $r\ge 2$, a graph $G$ is said to be $K_{r+1}$-saturated 
(or maximal $K_{r+1}$-free) if it contains no copy of $K_{r+1}$, but the addition of any edge from the complement $\overline{G}$ creates at least one copy of $K_{r+1}$. 
In 2018, Popielarz, Sahasrabudhe and Snyder \cite{PSS2018} 
proved the following stronger stability theorem for $K_{r+1}$-saturated graphs. 

\begin{theorem}[Popielarz--Sahasrabudhe--Snyder, 2018] \label{thmPSS}
Let $r\ge 2$ be an integer. For any $\varepsilon >0$, 
there exist  $\delta >0$ and $n_0$ such that 
if $G$ is a $K_{r+1}$-saturated graph on $n\ge n_0$ vertices 
with $e(G)\ge (1-\frac{1}{r}) \frac{n^2}{2} - \delta n^{\frac{r+1}{r}}$, 
then $G$ contains a complete $r$-partite subgraph on 
$(1-\varepsilon )n$ vertices. 
\end{theorem}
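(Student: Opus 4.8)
Our plan is to deduce Theorem \ref{thmPSS} from the Erd\H{o}s--Simonovits stability theorem (Theorem \ref{thm12}) together with F\"{u}redi's sharpening \cite{Fur2015} (equivalently Theorem \ref{thmBal}), broadly in the spirit of the structure analysis carried out above for Theorem \ref{thmcri}. Since $G$ is $K_{r+1}$-saturated it is in particular $K_{r+1}$-free, so Tur\'{a}n's theorem gives $e(G)\le e(T_r(n))$ and the hypothesis places $e(G)$ within $\delta n^{(r+1)/r}$ of the maximum. Given $\varepsilon>0$, we first apply Theorem \ref{thm12} with $F=K_{r+1}$: for $\delta$ small and $n$ large the edit distance $d(G,T_r(n))$ is $o(n^2)$, and by F\"{u}redi's quantitative form one can choose an $r$-partition $V(G)=V_1\cup\cdots\cup V_r$ with $\sum_i e(G[V_i])\le \delta n^{(r+1)/r}$, whence the number of absent crossing pairs is also $O_r(\delta n^{(r+1)/r})$. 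Calling $v\in V_i$ \emph{bad} if it has at least $n^{1/r}$ neighbours inside $V_i$ or at least $n^{1/r}$ non-neighbours in some $V_j$, the set $B$ of bad vertices satisfies $|B|\le C_r\,\delta n^{(r+1)/r}/n^{1/r}=C_r\,\delta n\le \varepsilon n/2$ once $\delta$ is small enough in terms of $\varepsilon$ and $r$. Setting $U_i=V_i\setminus B$, the induced $r$-partite subgraph $G[U_1,\dots,U_r]$ is $(n^{1/r}/n)$-almost complete with all $|U_i|$ large; since $K_{r+1}$ is color-critical and $G$ is $K_{r+1}$-free, part (i) of Lemma \ref{lemcri} shows each $U_i$ is an independent set, and part (ii) shows that every $x\in B$ misses all but $O_r(n^{1/r})$ vertices of some part.

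It now suffices to find a set $W\supseteq B$ with $|W|\le\varepsilon n$ such that every two vertices of $V(G)\setminus W$ lying in distinct remaining parts are adjacent in $G$; equivalently, writing $H$ for the auxiliary graph on $U_1\cup\cdots\cup U_r$ whose edges are the absent crossing pairs, it suffices to prove that $H$ has a vertex cover of size at most $\varepsilon n/2$. This is where the $K_{r+1}$-saturation is used decisively. If $v\in U_i$ is non-adjacent to $u\in U_j$, then since $U_i\cup U_j$ is a union of two independent sets the common neighbourhood $N(u)\cap N(v)$ meets only the $r-2$ core parts other than $U_i,U_j$, so an induced subgraph of $G$ on $N(u)\cap N(v)$ restricted to the core has clique number at most $r-2$; but maximality of $G$ forces a copy of $K_{r-1}$ inside $N(u)\cap N(v)$, so this certifying clique \emph{must} use a vertex of $B$. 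Hence every edge of $H$ is witnessed by a vertex of the \emph{small} set $B$. Moreover, because $G$ is $K_{r+1}$-free the neighbourhood of any such witness $z\in B$, restricted to the core, is an independent set meeting several parts, so $z$ in fact forces a whole complete $(r-1)$-partite pattern of absent crossing pairs among its core-neighbours; and since each core vertex misses fewer than $n^{1/r}$ crossing vertices, every such pattern is individually thin.

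The remaining, and most delicate, task is to turn this picture into the bound $\mathrm{VC}(H)\le\varepsilon n/2$. The idea is to count the contributions of the witnesses in $B$ against the global budget $e(H)=O_r(\delta n^{(r+1)/r})$ of absent crossing pairs: for each witness $z\in B$ one bounds the matching number of the pattern it contributes by the minimum core part-size of $N(z)$, which is itself controlled by the absent-pair budget (through an inequality of the form $a_zb_z\le e(H)$ for the part-sizes $a_z,b_z$ of $N(z)$ in the two relevant parts); one then sums over $z\in B$, using $|B|=O_r(\delta n)$ together with the $K_{r+1}$-freeness of $G$ to prevent these thin patterns from combining into a configuration with large vertex cover. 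For $\delta$ chosen small enough in terms of $\varepsilon$ and $r$, this is designed to yield $\mathrm{VC}(H)\le\varepsilon n/2$. Deleting a minimum vertex cover of $H$ together with $B$ then leaves a graph in which any two vertices from distinct parts are adjacent, that is, a complete $r$-partite subgraph on at least $(1-\varepsilon)n$ vertices, as required.

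The main obstacle is exactly the quantitative step of the previous paragraph. A naive count over-counts badly: distinct certifying $K_{r-1}$'s share vertices, and a single bad vertex may witness many absent pairs at once, so one must genuinely exploit both the thinness of each witness's pattern and the cap $O_r(\delta n^{(r+1)/r})$ on the total number of absent crossing pairs --- and it is precisely the interplay of these two facts that pins down the exponent $(r+1)/r$ appearing in the hypothesis (it is the F\"{u}redi/K\H{o}v\'{a}ri--S\'{o}s--Tur\'{a}n threshold at which $\delta n^{(r+1)/r}$ absent pairs can be covered by $O_r(\delta n)$ vertices once each witness is thin). One should also split the bad vertices according to the size of their core-neighbourhood, treating ``heavy'' and ``light'' witnesses by slightly different estimates. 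Every other ingredient --- the stability reduction, the passage to an almost complete $r$-partite core, and the independence of the parts via Lemma \ref{lemcri} --- is a routine adaptation of the analysis already used above for Theorem \ref{thmcri}.
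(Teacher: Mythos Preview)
First, note that the paper does \emph{not} prove Theorem~\ref{thmPSS}: it is quoted from \cite{PSS2018} and then used as a black box to derive the spectral and clique analogues. So there is nothing in the paper to compare your argument against; your proposal is an attempt to give an independent proof.

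Your setup (F\"uredi partition, core/bad split with threshold $n^{1/r}$, independence of the $U_i$ via Lemma~\ref{lemcri}, and the observation that any certifying $K_{r-1}$ for a missing crossing pair must meet $B$) is correct and useful. But there is a concrete error and a genuine gap. The error: you assert that ``the neighbourhood of any such witness $z\in B$, restricted to the core, is an independent set''; this is true only for $r=2$ (triangle-freeness). For $r\ge 3$, $K_{r+1}$-freeness only gives that $G[N(z)]$ is $K_r$-free, so $N(z)$ can contain many crossing edges, and the ``complete $(r-1)$-partite pattern of absent crossing pairs'' you describe need not exist. The gap is the vertex-cover bound $\mathrm{VC}(H)\le \varepsilon n/2$, which you yourself flag as the crux. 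Even in the clean case $r=2$, where $E(H)=\bigcup_{z\in B}(A_z\times B_z)$ with $|A_z|,|B_z|<n^{1/2}$, your counting only yields $\mathrm{VC}(H)\le \sum_z \min(|A_z|,|B_z|)\le |B|\cdot \sqrt{e(H)}=O(\delta^{3/2}n^{7/4})$, which is far too large; nothing in your sketch prevents the thin bicliques from overlapping so as to create a matching of size $\gg \varepsilon n$. Controlling this overlap is precisely the substantive content of \cite{PSS2018} (and is where the exponent $(r+1)/r$ becomes sharp), so the proposal as it stands has not reduced the problem --- it has only restated it in the vertex-cover language.
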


The spectral version and clique version can be  
obtained similarly. 

\begin{theorem} 
Let $r\ge 2$ be an integer. For any $\varepsilon >0$, 
there exist  $\delta >0$ and $n_0$ such that 
if $G$ is a $K_{r+1}$-saturated graph on $n\ge n_0$ vertices 
with $\lambda (G)\ge (1-\frac{1}{r}) n - \delta n^{\frac{1}{r}}$, 
then $G$ contains a complete $r$-partite subgraph on 
$(1-\varepsilon )n$ vertices. 
\end{theorem}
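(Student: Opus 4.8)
The plan is to deduce this spectral statement from the edge version, Theorem~\ref{thmPSS}, exactly in the spirit of Theorems~\ref{thmthm35} and~\ref{thm312}. The key point is that for a $K_{r+1}$-saturated graph a lower bound on $\lambda(G)$ forces a matching lower bound on $e(G)$, so the hypotheses of Theorem~\ref{thmPSS} are met and its conclusion transfers verbatim.

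First I would fix $\varepsilon>0$ and let $\delta_0>0$, $n_0'$ be the constants supplied by Theorem~\ref{thmPSS} for this $\varepsilon$. Now suppose $G$ is a $K_{r+1}$-saturated graph on $n$ vertices with $\lambda(G)\ge (1-\tfrac{1}{r})n-\delta n^{1/r}$, where $\delta>0$ and $n$ are to be chosen. Since $G$ is in particular $K_{r+1}$-free, the remark following Lemma~\ref{thmnikiforov} gives $\lambda(G)^2\le 2e(G)(1-\tfrac1r)$, hence
\[
e(G)\ \ge\ \frac{\lambda(G)^2}{2(1-\frac1r)}\ \ge\ \frac{\bigl((1-\frac1r)n-\delta n^{1/r}\bigr)^2}{2(1-\frac1r)}.
\]
Expanding the square, the cross term contributes $-\,\delta n^{1+1/r}$ to the numerator and the $\delta^2$ term is nonnegative, so
\[
e(G)\ \ge\ \Bigl(1-\tfrac1r\Bigr)\frac{n^2}{2}\;-\;\frac{\delta n^{1+1/r}}{2(1-\frac1r)}\ \ge\ \Bigl(1-\tfrac1r\Bigr)\frac{n^2}{2}\;-\;\delta'\,n^{\frac{r+1}{r}},
\]
where $\delta':=\frac{\delta}{2(1-1/r)}=\frac{r\delta}{2(r-1)}$. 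Choosing $\delta$ small enough that $\delta'\le\delta_0$ (e.g.\ $\delta\le\frac{2(r-1)}{r}\delta_0$), the graph $G$ satisfies the hypothesis of Theorem~\ref{thmPSS} with parameter $\delta'$, and taking $n\ge n_0'$ we conclude that $G$ contains a complete $r$-partite subgraph on $(1-\varepsilon)n$ vertices, as desired.

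There is essentially no obstacle here beyond being careful with the exponent bookkeeping: the only slightly delicate point is verifying that the error term after squaring is genuinely of order $n^{(r+1)/r}$ and not larger, which it is because $2\cdot\frac1r\le 1+\frac1r$ for $r\ge1$, so the discarded $\delta^2 n^{2/r}$ term is dominated. The same reasoning yields the clique version: if instead $k_s(G)\ge\binom{r}{s}(n/r)^s-\delta n^s$ for some $2\le s\le r$, then since $G$ is $K_{r+1}$-free, Lemma~\ref{lemFR} gives $e(G)\ge\binom r2\bigl(k_s(G)/\binom rs\bigr)^{2/s}$, and the same estimate as in the proof of Theorem~\ref{thm312} produces $e(G)\ge(1-\tfrac1r)\tfrac{n^2}{2}-c_{r,s}\,\delta^{2/s}n^2$; here one must check that $\delta^{2/s}n^2\le\delta' n^{(r+1)/r}$ can be arranged, which follows for large $n$ because the exponent $(r+1)/r>1$ of $n$ in Theorem~\ref{thmPSS} is smaller than $2$, so any fixed small $\delta$ makes the bound in Theorem~\ref{thmPSS} the weaker requirement. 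Hence Theorem~\ref{thmPSS} applies and again delivers the complete $r$-partite subgraph on $(1-\varepsilon)n$ vertices.
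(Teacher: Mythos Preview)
Your main argument for the spectral statement is exactly what the paper intends (the paper only says the result is ``obtained similarly'' from Theorem~\ref{thmPSS}): use that $G$ is $K_{r+1}$-free, apply Lemma~\ref{thmnikiforov} to turn the lower bound on $\lambda(G)$ into a lower bound on $e(G)$ of the form $(1-\tfrac1r)\tfrac{n^2}{2}-O(\delta)\,n^{(r+1)/r}$, and invoke Theorem~\ref{thmPSS}. One small arithmetic slip: the cross term in the numerator is $-2(1-\tfrac1r)\delta\,n^{1+1/r}$, so after dividing by $2(1-\tfrac1r)$ the error is $-\delta\,n^{(r+1)/r}$, not $-\tfrac{\delta}{2(1-1/r)}n^{(r+1)/r}$; hence $\delta'=\delta$. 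This is harmless for the argument.

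Your closing paragraph on the clique version, however, is off on two counts. First, the paper's hypothesis for the clique analogue is $k_s(G)\ge\binom{r}{s}(n/r)^s-\delta\,n^{s(r+1)/(2r)}$, not $-\delta n^s$; the exponent is chosen precisely so that Lemma~\ref{lemFR} yields an edge defect of order $n^{(r+1)/r}$ (compute: the defect in $e(G)$ comes out as $O(\delta^{2/s})\,n^{2-s(r-1)/(2r)}$, and for $s=2$ this equals $n^{(r+1)/r}$ while for $s>2$ it is smaller). Second, your claimed inequality $\delta^{2/s}n^2\le\delta' n^{(r+1)/r}$ goes the wrong way: since $(r+1)/r<2$, the left side dominates for large $n$, so with the hypothesis $-\delta n^s$ you \emph{cannot} meet the Popielarz--Sahasrabudhe--Snyder threshold. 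With the correct exponent $s(r+1)/(2r)$ the reduction to Theorem~\ref{thmPSS} goes through exactly as in the spectral case.
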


The following theorem  extends Theorem \ref{thmPSS} 
by setting $s=2$. 

\begin{theorem} 
Let $r\ge 2$ and $s\ge 2$ be  integers. For any $\varepsilon >0$, 
there exist  $\delta >0$ and $n_0$ such that 
if $G$ is a $K_{r+1}$-saturated graph on $n\ge n_0$ vertices 
with $k_s (G)\ge {r \choose s} (\frac{n}{r})^s - \delta n^{s\frac{r+1}{2r}}$, 
then $G$ contains a complete $r$-partite subgraph on 
$(1-\varepsilon )n$ vertices. 
\end{theorem}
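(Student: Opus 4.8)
The plan is to follow the template used for Theorems~\ref{thmthm35} and~\ref{thm312}: convert the hypothesis on $k_s(G)$ into a lower bound on $e(G)$ via Lemma~\ref{lemFR}, and then quote Theorem~\ref{thmPSS}. We may assume $2\le s\le r$, since for $s>r$ we have $\binom{r}{s}=0$ and the hypothesis carries no information. Because a $K_{r+1}$-saturated graph is in particular $K_{r+1}$-free, Lemma~\ref{lemFR} applies to $G$ and gives
\[
e(G)=k_2(G)\ \ge\ \binom{r}{2}\left(\frac{k_s(G)}{\binom{r}{s}}\right)^{2/s}.
\]
Put $a=(n/r)^s$ and $b=\delta\, n^{s(r+1)/(2r)}/\binom{r}{s}$, so the hypothesis reads $k_s(G)/\binom{r}{s}\ge a-b$. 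Since $r\ge 2$, we have $b/a=\delta\, r^{s}\,n^{-s(r-1)/(2r)}/\binom{r}{s}\to 0$, hence $0\le b/a\le 1$ once $n$ is large.

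With $x:=b/a\in[0,1]$ and exponent $2/s\le 1$, the elementary inequality $(1-x)^{2/s}\ge 1-x$ gives $(a-b)^{2/s}\ge a^{2/s}-a^{2/s-1}b$. Here $a^{2/s}=(n/r)^2$, while
\[
a^{2/s-1}b=\Bigl(\frac{n}{r}\Bigr)^{2-s}\cdot\frac{\delta\, n^{s(r+1)/(2r)}}{\binom{r}{s}}=\frac{r^{s-2}}{\binom{r}{s}}\,\delta\,n^{\,2-s(r-1)/(2r)}.
\]
Feeding this into the displayed consequence of Lemma~\ref{lemFR} gives
\[
e(G)\ \ge\ \binom{r}{2}\Bigl(\frac{n}{r}\Bigr)^{2}-C_{r,s}\,\delta\,n^{\,2-s(r-1)/(2r)},\qquad C_{r,s}:=\frac{\binom{r}{2}\,r^{s-2}}{\binom{r}{s}}.
\]

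It remains to compare exponents. Since $s\ge 2$ and $(r-1)/(2r)>0$,
\[
2-\frac{s(r-1)}{2r}\ \le\ 2-\frac{2(r-1)}{2r}\ =\ \frac{r+1}{r},
\]
with equality precisely when $s=2$. Thus for all sufficiently large $n$ we have $C_{r,s}\delta\,n^{2-s(r-1)/(2r)}\le C_{r,s}\delta\,n^{(r+1)/r}$, and since $\binom{r}{2}(n/r)^2=(1-\tfrac1r)\tfrac{n^2}{2}$ we obtain $e(G)\ge(1-\tfrac1r)\tfrac{n^2}{2}-C_{r,s}\delta\,n^{(r+1)/r}$. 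Given $\varepsilon>0$, let $\delta_0=\delta_0(\varepsilon,r)>0$ and $n_0'$ be the constants furnished by Theorem~\ref{thmPSS}; choosing $\delta$ with $C_{r,s}\delta\le\delta_0$ and $n_0\ge n_0'$ large enough for the exponent comparison to hold, Theorem~\ref{thmPSS} applied to $G$ produces a complete $r$-partite subgraph on $(1-\varepsilon)n$ vertices, as required.

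The computation is routine; the one point deserving genuine attention is the exponent inequality $2-s(r-1)/(2r)\le(r+1)/r$. This both explains why the exponent $s(r+1)/(2r)$ in the hypothesis is natural --- it degenerates to the $n^{(r+1)/r}$ of Theorem~\ref{thmPSS} when $s=2$ and is strictly more restrictive for $s\ge 3$ --- and ensures that the error term $C_{r,s}\delta\,n^{2-s(r-1)/(2r)}$ can be dominated by $C_{r,s}\delta\,n^{(r+1)/r}$ for large $n$. Tracking the constant $C_{r,s}$ against $\delta_0$ is harmless, as $C_{r,s}$ depends only on $r$ and $s$.
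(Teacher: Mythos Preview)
Your proof is correct and follows exactly the approach the paper indicates (``obtained similarly'' to Theorems~\ref{thmthm35} and~\ref{thm312}): pass from $k_s(G)$ to $e(G)$ via Lemma~\ref{lemFR}, then invoke Theorem~\ref{thmPSS}. One cosmetic difference: in the analogous proof of Theorem~\ref{thm312} the paper uses the subadditivity estimate $(a-b)^{2/s}\ge a^{2/s}-b^{2/s}$, which here yields the error term $\binom{r}{2}(\delta/\binom{r}{s})^{2/s}n^{(r+1)/r}$ with the target exponent $(r+1)/r$ immediately, whereas your inequality $(1-x)^{2/s}\ge 1-x$ produces the intermediate exponent $2-s(r-1)/(2r)$ and then requires the extra comparison $2-s(r-1)/(2r)\le (r+1)/r$. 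Both routes are valid and equally short. A small quibble: your opening remark that for $s>r$ ``the hypothesis carries no information'' is true, but that makes the theorem \emph{false} rather than trivially true in that range; the restriction $2\le s\le r$ should be read as part of the statement (as in Theorem~\ref{thm16}), not as a without-loss-of-generality reduction.
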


\subsection{Stability result for the $p$-spectral radius}

The spectral radius of a graph is defined 
as the largest  eigenvalue of its adjacency matrix. 
By the Rayleigh theorem, we know that 
it is also equal to the maximum value of 
$\bm{x}^TA(G)\bm{x}=2\sum_{\{i,j\}\in E(G)} x_ix_j$ over all 
$\bm{x}\in \mathbb{R}^n$ with $|x_1|^2 + \cdots +|x_n|^2=1$. 
The definition of the spectral radius was recently extended 
to the $p$-spectral radius.  We denote the $p$-norm of $\bm{x}$ by  
$\lVert \bm{x}\rVert_p  
=( |x_1|^p + \cdots +|x_n|^p)^{1/p}$. 
More precisely, 
the $p$-spectral radius of graph $G$ is defined as 
\[  \lambda^{(p)} (G) : =\max_{\lVert \bm{x}\rVert_p =1} 
2 \sum_{\{i,j\} \in E(G)} x_ix_j.   \] 
We remark that $\lambda^{(p)}(G)$ is a versatile parameter. 
Indeed, $\lambda^{(1)}(G)$ is known as the Lagrangian function of $G$, 
$\lambda^{(2)}(G)$ is the spectral radius of its adjacency matrix, 
and 
\begin{equation} \label{eqeqeq}
\lim_{p\to +\infty} \lambda^{(p)} (G)=2e(G),
\end{equation}
 which can be guaranteed 
by the following inequality 
\begin{equation} \label{eqps}
  2e(G)n^{-2/p} \le \lambda^{(p)}(G) \le (2e(G))^{1-1/p}.  
  \end{equation}
  To some extent, the $p$-spectral radius can be viewed as a unified extension of the classical 
  spectral radius and the size of a graph. 
In addition, it is worth mentioning that 
if $ 1\le q\le p$, then  $\lambda^{(p)}(G)n^{2/p} \le \lambda^{(q)}(G)n^{2/q}$ 
and $(\lambda^{(p)}(G)/2e(G))^p \le (\lambda^{(q)}(G)/2e(G))^q$; 
see \cite[Proposition 2.13 and 2.14]{Niki2013} for more details.

As commented by Kang and Nikiforov in \cite[p. 3]{KN2014}, 
linear-algebraic methods are irrelevant for the study of 
$\lambda^{(p)}(G)$ in general, and in fact no efficient methods 
are known for it. Thus the study of $\lambda^{(p)}(G)$ 
for $p\neq 2$ is far more complicated than the classical 
spectral radius.

The extremal  function for $p$-spectral radius is given as 
\[  \mathrm{ex}_{\lambda}^{(p)}(n, {F}) := 
\max\{ \lambda^{(p)} (G) : |G|=n ~\text{and $G$ is ${F}$-free}  \}.  \]
To some extent, the proof of  results on the $p$-spectral radius shares some similarities with the usual spectral radius when $p>1$; 
see \cite{Niki2013,KN2014} for extremal problems 
for the $p$-spectral radius.   
In 2014, Kang and Nikiforov \cite{KN2014} extended the Tur\'{a}n theorem 
to the $p$-spectral version for $p>1$. 
They proved that if $G$ is a $K_{r+1}$-free graph on $n$ vertices, 
then $\lambda^{(p)}(G)\le \lambda^{(p)}(T_r(n))$, 
equality holds if and only if $G=T_r(n)$. 
In symbols, we have 
\begin{equation} \label{eqKN}
 \mathrm{ex}_{\lambda}^{(p)} (n,K_{r+1}) =\lambda^{(p)}(T_r(n)). 
\end{equation}

\begin{theorem} \label{thm37}
If $F$ is a graph with chromatic number $\chi (F)=r+1$, 
then for every $p> 1$, 
\[  \mathrm{ex}_{\lambda}^{(p)} (n,F) 
=\lambda^{(p)} (T_r(n)) + o(n^{2- (2/p)})=
\left( 1-\frac{1}{r} + o(1)\right) n^{2- (2/p)} . \] 
\end{theorem}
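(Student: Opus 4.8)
The plan is to follow the same template used above for Theorems~\ref{thm13} and~\ref{thm15}: reduce an $F$-free graph to a $K_{r+1}$-free graph by the Erd\H{o}s--Frankl--R\"{o}dl removal lemma (Lemma~\ref{lemEFR}), control the change in the $p$-spectral radius caused by deleting few edges, and then invoke the $p$-spectral Tur\'{a}n theorem~(\ref{eqKN}) of Kang and Nikiforov. The lower bound is immediate by taking $G=T_r(n)$, since $\lambda^{(p)}(T_r(n)) = (1-\tfrac1r+o(1))n^{2-2/p}$; this last asymptotic itself follows by plugging the balanced weight vector $\bm{x}$ with all entries $n^{-1/p}$ into the definition, giving $\lambda^{(p)}(T_r(n)) \ge 2e(T_r(n)) n^{-2/p} = (1-\tfrac1r+o(1))n^{2-2/p}$, and combining with the upper bound below.

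For the upper bound, let $G$ be an $n$-vertex $F$-free graph with $\chi(F)=r+1$. First I would fix $\varepsilon>0$, apply Lemma~\ref{lemEFR} to remove at most $\varepsilon n^2$ edges and obtain a $K_{r+1}$-free subgraph $G^*$; write $G\setminus G^*$ for the graph of deleted edges, so $e(G\setminus G^*)\le \varepsilon n^2$. The key inequality I need is a ``subadditivity'' statement: $\lambda^{(p)}(G) \le \lambda^{(p)}(G^*) + \lambda^{(p)}(G\setminus G^*)$. This holds because for any unit $p$-norm vector $\bm{x}$ realizing $\lambda^{(p)}(G)$ we have $2\sum_{\{i,j\}\in E(G)} x_i x_j = 2\sum_{\{i,j\}\in E(G^*)} x_i x_j + 2\sum_{\{i,j\}\in E(G\setminus G^*)} x_i x_j$, and passing to $|\bm{x}|$ only increases each sum, so each term is bounded by the corresponding $p$-spectral radius evaluated at the same vector, hence by its maximum. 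Then the right-hand inequality in~(\ref{eqps}) gives $\lambda^{(p)}(G\setminus G^*) \le (2e(G\setminus G^*))^{1-1/p} \le (2\varepsilon n^2)^{1-1/p} = O(\varepsilon^{1-1/p})\, n^{2-2/p}$, which is $o(n^{2-2/p})$ after we let $\varepsilon\to0$. Finally, since $G^*$ is $K_{r+1}$-free, (\ref{eqKN}) yields $\lambda^{(p)}(G^*) \le \lambda^{(p)}(T_r(n))$, and therefore $\lambda^{(p)}(G) \le \lambda^{(p)}(T_r(n)) + o(n^{2-2/p})$.

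The main obstacle, and the only point needing care, is the subadditivity step: unlike the $p=2$ case, where $\lambda^{(2)}$ is a genuine matrix eigenvalue and the bound is the Rayleigh/Weyl inequality used verbatim in the proof of Theorem~\ref{thm13}, here one must argue directly from the variational definition and be slightly careful that replacing $\bm{x}$ by $|\bm{x}|$ does not decrease $\sum_{\{i,j\}\in E(H)} x_i x_j$ for each edge-subset $H$, which is clear since each product $x_ix_j$ is replaced by $|x_i||x_j|\ge x_ix_j$. A secondary bookkeeping point is quantifier order: given the target error $o(n^{2-2/p})$, one first picks $\varepsilon$ so that $(2\varepsilon)^{1-1/p}$ is as small as desired, then takes $n$ large enough for Lemma~\ref{lemEFR} to apply; since the paper only claims an $o(\cdot)$ conclusion this is routine. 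No regularity-type input beyond Lemma~\ref{lemEFR} and no new machinery for $p\neq 2$ is required, which is exactly the point of the unified treatment.
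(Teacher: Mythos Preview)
Your proposal is correct and follows essentially the same route as the paper: both use Lemma~\ref{lemEFR} to pass to a $K_{r+1}$-free subgraph $G^*$, the subadditivity $\lambda^{(p)}(G)\le \lambda^{(p)}(G^*)+\lambda^{(p)}(G\setminus G^*)$ together with the bound~(\ref{eqps}) to control the loss, and then the Kang--Nikiforov result~(\ref{eqKN}). If anything, you are more careful than the paper at the subadditivity step, where the paper simply says ``we can see from the definition.''
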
 

Theorem \ref{thm37} extends both Theorem \ref{thm11} and Theorem \ref{thm13} 
by noting (\ref{eqeqeq}). 

\begin{proof}
The Tur\'{a}n graph 
$T_r(n)$ is $r$-partite, so $T_r(n)$ is an $F$-free graph. 
Moreover, by inequality (\ref{eqps}), we have $\lambda^{(p)} (T_r(n)) 
\ge 2e(T_r(n))n^{-2/p} \ge 
 (1-\frac{1}{r})n^{2-(2/p)}- \frac{r}{4n^{2/p}}$. 
 Thus $T_r(n)$ gives the lower bound 
$\mathrm{ex}_{\lambda}^{(p)} (n,F) \ge 
\lambda^{(p)} (T_r(n))\ge (1-\frac{1}{r} +o(1))n^{2-(2/p)}$. 
More precisely, we can obtain  by detailed computation that 
 $\lambda^{(p)} (T_r(n))=(1+O(\frac{1}{n^2}))2e(T_r(n))n^{-2/p} 
 = (1-O(\frac{1}{n^2}))(1-\frac{1}{r})n^{2-2/p} $, 
 where $O(\frac{1}{n^2})$ stands for a positive error term. 

Now, assume that $G$ is an $n$-vertex $F$-free graph. 
By Lemma \ref{lemEFR}, we can remove $o(n^2)$ edges from 
$G$ and get a new graph $G^*$ which is $K_{r+1}$-free. 
We claim that the removal of $o(n^2)$ edges from $G$ can only decrease $\lambda^{(p)} (G)$ by at most 
$o(n^{2-2/p})$. Indeed, we can see from the definition that  
$\lambda^{(p)} (G) \le \lambda^{(p)} (G^*) + \lambda^{(p)} (G\setminus G^*) $,  and the inequality (\ref{eqps}) implies $\lambda^{(p)} (G\setminus G^*) 
\le (2e(G\setminus G^*))^{1-1/p} = o(n^{2-2/p})$. 
Since $G^*$ is $K_{r+1}$-free, the Kang--Nikiforov result (\ref{eqKN}) 
implies $\lambda^{(p)} (G^*) \le \mathrm{ex}_{\lambda}^{(p)} (n,K_{r+1}) 
=\lambda^{(p)} (T_r(n))$. Thus, we get $\lambda^{(p)} (G) \le \lambda^{(p)} (T_r(n)) + o(n^{2-2/p})$. This completes the proof. 
\end{proof}

Our unified treatment of 
Theorems \ref{thm14} and \ref{thm16} 
stated in Subsection 3.1 
can allow us to generalize the spectral stability 
theorem in terms of  the $p$-spectral radius.

\begin{theorem}  \label{thm32}
Let $F$ be a graph with $\chi (F)=r+1\ge 3$. For every $p>1$ and 
$\varepsilon >0$,  
there exist $\delta >0$ and $n_0$ such that 
if $G$ is a graph on $n\ge n_0$ vertices, 
 and $G$ is $F$-free such that 
$\lambda^{(p)}(G)\ge (1- \frac{1}{r} - \delta) n^{2- (2/p)}$, 
 then  the edit distance $d(G,T_r(n)) \le \varepsilon n^2$. 
\end{theorem}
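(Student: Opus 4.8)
The plan is to mimic exactly the strategy used for Theorem~\ref{thm14} (the ordinary spectral stability theorem), replacing the role of inequality~(\ref{equp}) by its $p$-spectral analogue in~(\ref{eqps}) and the role of Lemma~\ref{thmnikiforov} by the Kang--Nikiforov bound~(\ref{eqKN}). First I would fix $\varepsilon>0$ and apply the Erd\H{o}s--Simonovits stability theorem (Theorem~\ref{thm12}) to the clique $K_{r+1}$: this produces $\delta(K_{r+1},\tfrac{\varepsilon}{2})>0$ and $n_0(K_{r+1},\tfrac{\varepsilon}{2})$ so that every $K_{r+1}$-free graph $H$ on $n\ge n_0$ vertices with $e(H)\ge\bigl(1-\tfrac1r-\delta(K_{r+1},\tfrac{\varepsilon}{2})\bigr)\tfrac{n^2}{2}$ satisfies $d(H,T_r(n))\le\tfrac{\varepsilon}{2}n^2$.

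Next, given an $F$-free graph $G$ on $n$ vertices with $\lambda^{(p)}(G)\ge(1-\tfrac1r-\delta)n^{2-(2/p)}$, I would invoke Lemma~\ref{lemEFR} with a small parameter $\varepsilon_0$: removing at most $\varepsilon_0 n^2$ edges yields a $K_{r+1}$-free subgraph $G^*$. The key estimate is that deleting these edges costs little in $p$-spectral radius. From the definition of $\lambda^{(p)}$ and the triangle-type inequality already used in the proof of Theorem~\ref{thm37}, $\lambda^{(p)}(G)\le\lambda^{(p)}(G^*)+\lambda^{(p)}(G\setminus G^*)$, and the upper bound in~(\ref{eqps}) gives $\lambda^{(p)}(G\setminus G^*)\le(2e(G\setminus G^*))^{1-1/p}\le(2\varepsilon_0 n^2)^{1-1/p}=(2\varepsilon_0)^{1-1/p}n^{2-(2/p)}$. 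Hence $\lambda^{(p)}(G^*)\ge\bigl(1-\tfrac1r-\delta-(2\varepsilon_0)^{1-1/p}\bigr)n^{2-(2/p)}$.

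Now I would translate this $p$-spectral lower bound on $G^*$ into an edge lower bound. Since $G^*$ is $K_{r+1}$-free, the Kang--Nikiforov theorem~(\ref{eqKN}) gives $\lambda^{(p)}(G^*)\le\lambda^{(p)}(T_r(n))\le(1-\tfrac1r)n^{2-(2/p)}$; more usefully, the lower bound in~(\ref{eqps}) applied to $G^*$ yields $2e(G^*)n^{-2/p}\ge$ — wait, that points the wrong way. Instead I would use the sharper fact, analogous to the remark after Lemma~\ref{thmnikiforov}, that for a $K_{r+1}$-free graph one has $\lambda^{(p)}(G^*)\le(1-\tfrac1r)^{1-1/p}(2e(G^*))^{1-1/p}$; this follows because the optimal weight vector for $\lambda^{(p)}$ is supported on a $K_{r+1}$-free (hence $r$-partite-dominated) structure, or more simply by combining~(\ref{eqKN}) with the scaling relation $\bigl(\lambda^{(p)}(G)/2e(G)\bigr)^{p}$ being monotone. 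Raising to the power $p/(p-1)$ gives $2e(G^*)\ge(1-\tfrac1r)^{-1}\bigl(1-\tfrac1r-\delta-(2\varepsilon_0)^{1-1/p}\bigr)^{p/(p-1)}n^{2}$, which for small $\delta,\varepsilon_0$ is at least $\bigl(1-\tfrac1r-c_p(\delta+\varepsilon_0^{1-1/p})\bigr)n^{2}$ for a constant $c_p$ depending only on $p$. Choosing $\varepsilon_0$ small, then $n$ large, then $\delta$ small so that $c_p(\delta+\varepsilon_0^{1-1/p})\le\delta(K_{r+1},\tfrac{\varepsilon}{2})$, the Simonovits stability theorem gives $d(G^*,T_r(n))\le\tfrac{\varepsilon}{2}n^2$, and therefore $d(G,T_r(n))\le d(G,G^*)+d(G^*,T_r(n))\le\varepsilon_0 n^2+\tfrac{\varepsilon}{2}n^2\le\varepsilon n^2$.

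The main obstacle I expect is establishing the clean Kang--Nikiforov-type inequality $\lambda^{(p)}(G^*)\le(1-\tfrac1r)^{1-1/p}(2e(G^*))^{1-1/p}$ for $K_{r+1}$-free graphs with the correct exponent, since unlike the $p=2$ case in Lemma~\ref{thmnikiforov} there is no Motzkin--Straus shortcut; I would either cite the relevant statement from \cite{KN2014} or \cite{Niki2013} directly, or argue via the concavity/scaling properties of $\lambda^{(p)}$ recorded at the end of the subsection. Everything else is a routine adaptation of the proof of Theorem~\ref{thm14}, with $n^2$ consistently replaced by $n^{2-(2/p)}$ and square roots replaced by $(1-1/p)$-th powers.
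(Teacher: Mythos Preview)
Your approach is correct and matches the paper's exactly: the paper's proof also reduces to Theorem~\ref{thm12} via Lemma~\ref{lemEFR}, uses the subadditivity $\lambda^{(p)}(G)\le\lambda^{(p)}(G^*)+\lambda^{(p)}(G\setminus G^*)$ together with~(\ref{eqps}), and then cites \cite[Theorem~3]{KN2014} for the $p$-spectral analogue of Lemma~\ref{thmnikiforov}.

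The one correction: the exponent on $(1-\tfrac1r)$ in your key inequality is wrong. The correct Kang--Nikiforov bound (which is exactly what the paper cites) is
\[
\lambda^{(p)}(G^*)\le (2e(G^*))^{1-1/p}\,(1-\tfrac1r)^{1/p},
\]
not $(1-\tfrac1r)^{1-1/p}$. Your version is actually false for $p>2$: on $T_r(n)$ it gives an upper bound $\approx(1-\tfrac1r)^{2-2/p}n^{2-2/p}$, which is strictly smaller than $\lambda^{(p)}(T_r(n))\approx(1-\tfrac1r)n^{2-2/p}$. With the correct exponent, raising to the power $p/(p-1)$ yields $2e(G^*)\ge(1-\tfrac1r)^{-1/(p-1)}\bigl(\lambda^{(p)}(G^*)\bigr)^{p/(p-1)}$, and a first-order expansion gives $e(G^*)\ge\bigl(1-\tfrac1r-c_p(\delta+\varepsilon_0^{1-1/p})\bigr)\tfrac{n^2}{2}$ for a constant $c_p$ depending only on $p,r$; the remainder of your argument then goes through unchanged. (As a sanity check, the correct formula specializes to $\sqrt{2m(1-\tfrac1r)}$ at $p=2$ and to the Motzkin--Straus value $1-\tfrac1r$ at $p=1$.)
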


Theorem \ref{thm32} extends both Theorem \ref{thm12} and Theorem \ref{thm14} 
by applying (\ref{eqeqeq}). 

\begin{proof}
The proof is short and similar. It is based on applying 
Theorem \ref{thm12} and Lemma \ref{lemEFR}. 
There are two  differences in the proof. 
The first is that $ \lambda^{(p)} (G) \le 
\lambda^{(p)} (G^*) + \lambda^{(p)} (G\setminus G^*)$. 
The second is an extension of Lemma \ref{thmnikiforov}, which states that  
$\lambda^{(p)}(G) \le (2m)^{1-1/p}(1-\frac{1}{r})^{1/p}$ 
whenever $G$ is an $m$-edge $K_{r+1}$-free graph; see, e.g.,  \cite[Theorem 3]{KN2014}. 
\end{proof}

In the above, we established the stability theorem for $p$-spectral radius. 
 Under the similar line of our proof of Theorem \ref{thmcri}, 
 applying the $p$-spectral stability theorem can  allow us to   
 extend Theorem \ref{thmcri} and 
prove the exact $p$-spectral Tur\'{a}n function  
 for every color-critical graph and  real value $p>1$. 
Moreover, it is possible to extend the usual spectral extremal 
results to the $p$-spectral radius by applying 
the $p$-spectral stability  result. 

\begin{theorem}
If $F$ is a  graph with a critical edge and $\chi (F)=r+1$ where $r\ge 2$,  
then for every real number $p>1$, there exists an $n_0=n_0(F,p)$ 
such that  
\[    \mathrm{ex}_{\lambda}^{(p)}(n,F)=\lambda^{(p)}(T_r(n))  \]
 holds for all $n\ge n_0$, and 
the unique extremal graph is the Tur\'{a}n graph $T_r(n)$. 
\end{theorem}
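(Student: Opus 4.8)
The plan is to imitate, essentially line for line, our alternative proof of Theorem~\ref{thmcri}, replacing the adjacency spectral radius $\lambda$ by the $p$-spectral radius $\lambda^{(p)}$ and substituting for each spectral ingredient its $p$-spectral counterpart: Theorem~\ref{thm37} in place of Theorem~\ref{thm13}, Theorem~\ref{thm32} in place of Theorem~\ref{thm14}, and the Kang--Nikiforov $p$-spectral Tur\'{a}n theorem~(\ref{eqKN})---together with its uniqueness clause, namely that every $K_{r+1}$-free graph $H$ on $m$ vertices satisfies $\lambda^{(p)}(H)\le\lambda^{(p)}(T_r(m))$ with equality only for $H=T_r(m)$---in place of the spectral Tur\'{a}n theorem. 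Crucially, the combinatorial Lemma~\ref{lemcri}, which describes the structure that an $F$-free graph containing an $\varepsilon$-almost complete $r$-partite subgraph must have, refers to no spectral parameter and so applies verbatim. Thus, fixing $p>1$ and a small $\varepsilon>0$, we let $G$ be an $n$-vertex $F$-free graph with $\lambda^{(p)}(G)\ge\lambda^{(p)}(T_r(n))$ (note that $T_r(n)$ is $r$-partite, hence $F$-free), and we aim to prove $G=T_r(n)$.

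The first step is to establish $\delta(G)\ge(1-\frac1r-\varepsilon)n$. Following the proof of Theorem~\ref{thmcri}, if this fails one deletes vertices of too-small degree one by one, and here the role of the successively-vertex-deletion estimate \cite[Theorem~5]{Niki2008} is played by its $p$-spectral version: working with an optimal vector $\bm{x}\ge0$, $\lVert\bm{x}\rVert_p=1$, for which the eigenequation $\sum_{u\sim v}x_u=\lambda^{(p)}(G)\,x_v^{p-1}$ holds, one obtains a quantitative lower bound for $\lambda^{(p)}(G-v)$ in terms of $\lambda^{(p)}(G)$ and $x_v$, and controls $x_v$ through $\sum_{u\sim v}x_u\le d(v)$ and the H\"{o}lder bound $\sum_u x_u\le n^{1-1/p}$. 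This should yield a subgraph $G_1$ on $n_1\ge n/2$ vertices satisfying either (a) $\lambda^{(p)}(G_1)>(1-\frac1r+\frac{\varepsilon}{4})n_1^{2-2/p}$, which by Theorem~\ref{thm37} forces $F\subseteq G_1$, a contradiction; or (b) $\delta(G_1)\ge(1-\frac1r-\varepsilon)n_1$ and $\lambda^{(p)}(G_1)>\lambda^{(p)}(T_r(n_1))$. In case (b) one applies Theorem~\ref{thm32} to $G_1$ to obtain a partition $V(G_1)=V_1\cup\cdots\cup V_r$ with $d(G_1,T_r(n_1))\le\varepsilon n_1^2$, and then runs the bad-vertex cleanup exactly as in the proof of Theorem~\ref{thmcri}: discard the set $B$ of vertices missing at least $\sqrt{\varepsilon}n_1$ edges to some part, obtain a $3\sqrt{\varepsilon}$-almost complete $r$-partite subgraph on the sets $U_i=V_i\setminus B_i$, whose parts are independent by Lemma~\ref{lemcri}(i), and re-insert the vertices of $B$ one by one, each time placing a bad vertex into the part where Lemma~\ref{lemcri}(ii) guarantees it has few neighbors and using $\delta(G_1)\ge(1-\frac1r-\varepsilon)n_1$ to check that almost-completeness persists with a slightly worse constant. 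At the end $G_1$ is $r$-partite, hence $K_{r+1}$-free, so (\ref{eqKN}) gives $\lambda^{(p)}(G_1)\le\lambda^{(p)}(T_r(n_1))$, contradicting (b). Hence $\delta(G)\ge(1-\frac1r-\varepsilon)n$.

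With the minimum-degree bound secured, one runs the partition-plus-cleanup argument once more with $G$ itself in place of $G_1$---now starting directly from Theorem~\ref{thm32} applied to $G$, which is legitimate since $\lambda^{(p)}(G)\ge\lambda^{(p)}(T_r(n))\ge(1-\frac1r-o(1))n^{2-2/p}$---and concludes that $G$ is $r$-partite. Then $K_{r+1}$-freeness and (\ref{eqKN}) give $\lambda^{(p)}(G)\le\lambda^{(p)}(T_r(n))$, and combined with $\lambda^{(p)}(G)\ge\lambda^{(p)}(T_r(n))$ the uniqueness in (\ref{eqKN}) forces $G=T_r(n)$. The main obstacle I anticipate is precisely the $p$-spectral successively-vertex-deletion step: one must verify that the dichotomy (a)/(b) still holds, which amounts to redoing the $p=2$ book-keeping with $n_i$ and $\lambda^{(p)}(G_i)$ now co-varying, and in particular confirming that the drop in $\lambda^{(p)}$ caused by removing a low-degree vertex stays below the natural gap $\lambda^{(p)}(T_r(n_i))-\lambda^{(p)}(T_r(n_i-1))$, which by the asymptotics $\lambda^{(p)}(T_r(m))=(1-\frac1r)m^{2-2/p}\bigl(1+O(m^{-2})\bigr)$ read off from the proof of Theorem~\ref{thm37} is of order $n_i^{1-2/p}$. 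For $p\ge2$ this mirrors the case $p=2$ fairly closely, while the range $1<p<2$, where the exponent $1-2/p$ is negative, requires a little more attention. Everything else---the bad-vertex cleanup, the appeals to Lemma~\ref{lemcri}, and the applications of the cited theorems---is a routine transcription of the argument for $p=2$.
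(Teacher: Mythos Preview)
Your proposal is correct and takes essentially the same approach as the paper. The paper does not give a full proof of this theorem; it merely states, in the paragraph preceding the statement, that ``under the similar line of our proof of Theorem~\ref{thmcri}, applying the $p$-spectral stability theorem can allow us to extend Theorem~\ref{thmcri},'' and then defers to Kang--Nikiforov and Keevash--Lenz--Mubayi for independent proofs by other methods. Your write-up is in fact more detailed than the paper's sketch: you correctly identify the three substitutions (Theorem~\ref{thm37} for Theorem~\ref{thm13}, Theorem~\ref{thm32} for Theorem~\ref{thm14}, equation~(\ref{eqKN}) for the spectral Tur\'{a}n theorem), you note that Lemma~\ref{lemcri} carries over unchanged, and you flag the one step---the $p$-spectral analogue of the successively-vertex-deletion lemma \cite[Theorem~5]{Niki2008}---that the paper passes over in silence but which does require a short separate argument, particularly in the range $1<p<2$.
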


\medskip 

\noindent 
{\bf Remark.}  
We remark that Kang and Nikiforov \cite[Theorem 6]{KN2014}, 
 Keevash, Lenz and Mubayi \cite[Corollary 1.5]{KLM2014} independently
proved  the same result with a different method.

\subsection{The minimum degree version} 

In this subsection, we shall consider 
the extremal graph problems in term of the minimum degree. 
Recall that $\delta (G)$ is the minimum degree of $G$. 
We define $\mathrm{ex}_{\delta}(n,F)$ 
to be the largest minimum degree 
in an $n$-vertex  graph that contains no copy of $F$, that is, 
\[ \mathrm{ex}_{\delta}(n,F):=\max \bigl\{ \delta(G): |G|=n~\text{and}~F\nsubseteq G \bigr\}. \]

First of all, we prove the degree version of Tur\'{a}n's theorem.\footnote{We are not sure whether 
Theorem \ref{degturan}  have already appeared in the literature, 
though we have not yet found this theorem elsewhere, 
so we  intend to present a proof here for completeness. } 

\begin{theorem}  \label{degturan}
If $G$ is an $n$-vertex graph containing no copy of $K_{r+1}$, 
then 
\[  \delta (G)\le \delta (T_r(n)). \]
Moreover, the equality holds if and only if $G=T_r(n)$.   
\end{theorem}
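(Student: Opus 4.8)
The plan is to mimic the two-step stability strategy already used for the spectral version (Theorem~\ref{thmcri}), but now driven by the minimum-degree hypothesis rather than a spectral one. First I would show that a $K_{r+1}$-free graph $G$ with $\delta(G)\ge \delta(T_r(n))$ must already be ``almost'' $r$-partite. Observe that $\delta(T_r(n))=n-\lceil n/r\rceil$, so $e(G)\ge \tfrac12 n\,\delta(G)\ge \tfrac12 n(n-\lceil n/r\rceil)$, which is at least $e(T_r(n)) - O(n)$. Hence the Erd\H{o}s--Simonovits stability theorem (Theorem~\ref{thm12}) applies: for any fixed $\varepsilon>0$ and $n$ large, there is an $r$-partition $V(G)=V_1\cup\cdots\cup V_r$ with $|V_i|\in\{\lfloor n/r\rfloor,\lceil n/r\rceil\}$ and $\sum_i e(G[V_i]) + \bigl(e(T_r(n))-e(G[V_1,\ldots,V_r])\bigr)\le \varepsilon n^2$.

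Next I would run the vertex-cleaning argument from the proof of Theorem~\ref{thmcri} essentially verbatim. Define bad vertices as those of some $V_i$ missing at least $\sqrt{\varepsilon}\,n$ neighbours in some $V_j$; there are at most $2\sqrt{\varepsilon}\,n$ of them, and after deleting them the remaining sets $U_i=V_i\setminus B_i$ induce a $3\sqrt{\varepsilon}$-almost complete $r$-partite subgraph. Since $K_{r+1}$ is itself a color-critical graph with $\chi(K_{r+1})=r+1$, Lemma~\ref{lemcri} applies with $F=K_{r+1}$: part~(i) forces each $U_i$ to be independent, and part~(ii) lets me reinsert each bad vertex $x$ into a part $U_i$ in which it has few neighbours. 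The minimum-degree hypothesis $\delta(G)\ge n-\lceil n/r\rceil$ is exactly what guarantees $x$ has almost all of $\bigcup_{j\ne i}U_j$ in its neighbourhood, so the reinserted partition stays almost complete and, by Lemma~\ref{lemcri}(i) again, stays properly coloured. Iterating until no bad vertices remain shows $G$ is $r$-partite.

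Finally, once $G$ is $r$-partite with $\delta(G)\ge \delta(T_r(n))=n-\lceil n/r\rceil$, I would argue that the partition must be balanced and complete. If $G=H_1\cup\cdots\cup H_r$ with part sizes $h_1\le\cdots\le h_r$, then a vertex in the largest part $H_r$ has degree at most $n-h_r$, so $n-h_r\ge \delta(G)\ge n-\lceil n/r\rceil$, forcing $h_r\le\lceil n/r\rceil$; combined with $\sum h_i=n$ this pins every $h_i$ to $\lfloor n/r\rfloor$ or $\lceil n/r\rceil$. Then any non-edge between two parts would drop that endpoint's degree below $n-h_r\le n-\lfloor n/r\rfloor$; checking against $\delta(T_r(n))$ shows this is only possible in the unbalanced-residue case and a short case analysis (comparing $n-h_i$ across parts) eliminates it, so $G$ is complete $r$-partite with the balanced part sizes, i.e.\ $G=T_r(n)$. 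The main obstacle I anticipate is not the stability machinery, which is inherited wholesale, but the bookkeeping in this last step: handling the divisibility cases when $r\nmid n$ and verifying that no missing cross-edge is compatible with $\delta(G)\ge\delta(T_r(n))$, so that the extremal graph is forced to be exactly $T_r(n)$ and not merely some complete $r$-partite graph with the right part sizes.
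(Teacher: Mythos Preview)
Your approach has two genuine problems.

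First, the stability machinery (Theorem~\ref{thm12} and Lemma~\ref{lemcri}) only applies for $n\ge n_0$, but Theorem~\ref{degturan} is stated for \emph{all} $n$, with no asymptotic qualifier. So even if the rest of the argument went through, you would have proved the inequality only for sufficiently large $n$.

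Second, the ``short case analysis'' you anticipate in the final step cannot succeed, because the equality claim you are trying to establish there is in fact false. Take $r=3$, $n=10$: the graph $K_{4,4,2}$ is $K_4$-free and has $\delta(K_{4,4,2})=6=\delta(T_3(10))$, yet $K_{4,4,2}\ne T_3(10)=K_{4,3,3}$. Your argument correctly deduces $h_r\le\lceil n/r\rceil$, but that does \emph{not} force the partition to be balanced; one would also need $h_1\ge\lfloor n/r\rfloor$, and the minimum-degree hypothesis does not give this. So the obstacle you flag is not mere bookkeeping---it is a genuine counterexample. (This is really a defect in the paper's statement of the equality case; neither of the paper's own proofs actually establishes it either.)

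For comparison, the paper dispatches the inequality $\delta(G)\le\delta(T_r(n))$ by an elementary argument valid for every $n$: writing $n=qr+s$ with $1\le s\le r$, if $\delta(G)\ge n-q$ then for any $r$ vertices $u_1,\dots,u_r$ one has $\bigl|\bigcap_i N(u_i)\bigr|\ge r(n-q)-(r-1)n=s\ge 1$, yielding a $K_{r+1}$. A second proof simply reads the inequality off from Tur\'an's theorem via $2e(G)\ge n\,\delta(G)$ together with the fact that at most $n-1$ vertices of $T_r(n)$ have degree $\delta(T_r(n))+1$. Both proofs avoid stability entirely and take four lines; the heavy regularity-flavoured route you propose is unnecessary here.
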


\medskip 

Before starting the proof, 
we show that 
the degree Tur\'{a}n theorem implies the classical Tur\'{a}n theorem. 
Indeed, given an $n$-vertex $K_{r+1}$-free graph $G$, 
the degree Tur\'{a}n Theorem \ref{degturan} implies 
$\delta (G)\le \delta (T_r(n))$. We delete a vertex of minimum degree, 
and the resulting graph $G'$ has 
$n-1 $ vertices with $e(G')=e(G)-\delta (G)$ edges. 
Note that $G'$ has no copy of $K_{r+1}$. By the inductive hypothesis, 
we obtain $e(G')\le e(T_r(n-1))$. 
Thus we have $e(G)=e(G') + \delta (G)\le e(T_r(n-1)) + \delta (T_r(n)) =e(T_r(n))$.  
Moreover, the equality holds if and only if 
$e(G')=e(T_r(n-1))$ and $\delta (G)=\delta (T_r(n))$. Hence 
the equality case of the degree Tur\'{a}n theorem implies $G=T_r(n)$. 

\begin{proof}
Note that $\delta (T_r(n))= n- \lceil \frac{n}{r} \rceil$ and 
$\delta (G) \ge n- \lceil \frac{n}{r} \rceil +1$.  
Suppose that $n=qr+s$, where $1\le s \le r$. 
Hence $\delta (G) \ge n- (q+1) +1=n-q$. 
Let $u_1,\ldots ,u_r$ be $r$ distinct vertices in $G$. 
Then we have  $|\cap_{i=1}^r N(u_i)| 
\ge \sum_{i=1}^r |N(u_i)| - (r-1)|\cup_{i=1}^r N(u_i)| 
\ge r(n-q) - (r-1) n=s\ge 1$, which implies that 
$G$ contains a copy of $K_{r+1}$. 
\end{proof}

In what follows,  
we  give another way to show the degree version. 
More precisely, we shall show that 
  the degree version for such extremal problem 
can be deduced from the classical edge version. 

\begin{proof}[Second proof] 
Let $G$ be a $K_{r+1}$-free graph on $n$ vertices. 
By the Tur\'{a}n theorem, we know that $e(G)\le e(T_r(n))$. 
We assume on the contrary that $ \delta (G) \ge \delta (T_r(n)) +1$. 
We assume that $s$ vertices of $T_r(n)$ have
 degree $\delta (T_r(n))$, 
and $t$ vertices  have degree 
$\delta (T_r(n))+1$, where $s+t=n$ and $0\le t<n$. 
We have 
$2e(G)=\sum d_i \ge n\delta (T_r(n)) +n >n\delta (T_r(n)) + t =
 2e(T_r(n))$,  a contradiction. 
Therefore, we  get $\delta (G) \le \delta (T_r(n))$. 
Moreover, equality holds if and only if $e(G) = e(T_r(n))$, 
and then $G=T_r(n)$. 
So the Tur\'{a}n theorem implies the degree version. 
On the other hand, the degree version can deduce the Tur\'{a}n theorem  by deleting 
a vertex with  minimum degree, we have 
$e(G)=e(G-v) + d(v)\le e(T_r(n-1)) + \delta (T_r(n))= e(T_r(n))$. 
\end{proof}

As mentioned before the proof, 
we know that the degree version implies the classical Tur\'{a}n theorem. 
The proof of Theorem \ref{degturan} seems not complicated. However, 
it is surprising that Theorem \ref{degturan} 
seems not well-known in extremal graph community. 
Moreover, the second proof reveals an interesting phenomenon 
that both the degree version 
 and the edge version of the Tur\'{a}n theorem are equivalent. 
 
 \medskip 
 \noindent 
{\bf Remark.}~ 
For some extremal graph problems, 
if the extremal graph 
is regular or nearly regular, then almost all degree version 
can imply the usual edge version. 
For example, the extremal hypergraph Tur\'{a}n problem for 
the Fano plane. Generally speaking, for some problems with 
the extremal graphs far from being regular, 
these two versions can  not be converted to each other. 
For instance, the Erd\H{o}s--Ko--Rado theorem 
and its degree version, the Hilton--Milner theorem and its degree version.

\medskip 

For completeness, 
we next are going to present the degree versions of the Erd\H{o}s--Stone--Simonovits 
theorem and Erd\H{o}s--Simonovits stability theorem. 
The proofs can be given as a direct consequence of Theorems \ref{thm11} and \ref{thm12}. 

\begin{theorem}
If $F$ is a graph with chromatic number $\chi (F)=r+1$, 
then 
\[  \mathrm{ex}_{\delta} (n,F) =\delta (T_r(n)) + o(n)=
\left( 1-\frac{1}{r} + o(1)\right) n . \] 
\end{theorem}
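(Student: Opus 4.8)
The plan is to deduce the degree version of Erd\H{o}s--Stone--Simonovits directly from its edge version (Theorem \ref{thm11}), using the same elementary manipulations that appeared in the second proof of Theorem \ref{degturan}. First I would establish the lower bound: since $\chi(F)=r+1$, the Tur\'an graph $T_r(n)$ is $F$-free, and $\delta(T_r(n)) = n - \lceil n/r\rceil = (1-\frac1r)n + O(1)$, so $\mathrm{ex}_{\delta}(n,F)\ge \delta(T_r(n)) = (1-\frac1r+o(1))n$. This handles the easy direction with no real work.

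For the upper bound I would argue by a vertex-deletion / averaging argument. Suppose $G$ is an $n$-vertex $F$-free graph. The key observation is that every induced subgraph of $G$ is still $F$-free, so Theorem \ref{thm11} applies to $G$ on any number of vertices. If $\delta(G)$ were substantially larger than $(1-\frac1r)n$, say $\delta(G) \ge (1-\frac1r+\varepsilon)n$ for a fixed $\varepsilon>0$, then repeatedly deleting a minimum-degree vertex yields $e(G) = \sum_{i} d_i$ summed appropriately; more cleanly, $2e(G) = \sum_{v\in V(G)} d_G(v) \ge n\,\delta(G) \ge (1-\frac1r+\varepsilon)n^2 > 2\,e(T_r(n)) + \varepsilon n^2/2$ for large $n$, contradicting $e(G)\le \mathrm{ex}(n,F) = e(T_r(n)) + o(n^2) = (1-\frac1r+o(1))\frac{n^2}{2}$ once $n$ is large enough that the $o(n^2)$ term is below $\varepsilon n^2/4$. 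Hence $\delta(G) \le (1-\frac1r + o(1))n = \delta(T_r(n)) + o(n)$, which is the claimed bound.

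Combining the two directions gives $\mathrm{ex}_{\delta}(n,F) = \delta(T_r(n)) + o(n) = (1-\frac1r+o(1))n$. I do not expect any genuine obstacle here: the entire content is the trivial inequality $2e(G)\ge n\,\delta(G)$ together with the already-proved edge version. The only mild care needed is bookkeeping of the $o(\cdot)$ terms — one must check that the $o(n^2)$ slack in Theorem \ref{thm11} translates to an $o(n)$ slack after dividing by $n$, which is immediate. An entirely analogous argument, substituting Theorem \ref{thm12} for Theorem \ref{thm11} and tracking the edit distance, yields the degree version of the Erd\H{o}s--Simonovits stability theorem; there one additionally notes that a graph with $\delta(G)\ge(1-\frac1r-\delta)n$ has $e(G)\ge (1-\frac1r-\delta)\frac{n^2}{2}$, so the edge-stability hypothesis is met and $d(G,T_r(n))\le\varepsilon n^2$ follows directly.
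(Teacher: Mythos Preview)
Your proposal is correct and follows exactly the approach the paper intends: the paper does not write out a proof but simply states that the result ``can be given as a direct consequence of Theorems \ref{thm11} and \ref{thm12},'' and your argument via $2e(G)\ge n\,\delta(G)$ is precisely the elementary deduction from Theorem \ref{thm11} that the paper has in mind (mirroring the second proof of Theorem \ref{degturan}). Your closing remark on the degree stability theorem likewise matches the paper's one-line indication.
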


\begin{theorem}[Degree stability theorem] 
Let $F$ be a graph with $\chi (F)=r+1\ge 3$. For every $\varepsilon >0$,  
there exist $\delta >0$ and $n_0$ such that 
if $G$ is a graph on $n\ge n_0$ vertices, 
 and $G$ is $F$-free such that 
$\delta (G)\ge (1- \frac{1}{r} - \delta) n$, 
 then  the edit distance $d(G,T_r(n)) \le \varepsilon n^2$. 
\end{theorem}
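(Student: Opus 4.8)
The plan is to deduce this directly from the Erd\H{o}s--Simonovits stability theorem (Theorem~\ref{thm12}), using only the elementary degree-sum bound $2e(G)=\sum_{v}d_G(v)\ge n\,\delta(G)$. Given $\varepsilon>0$, I would first apply Theorem~\ref{thm12} to the graph $F$ itself: since $\chi(F)=r+1\ge 3$, there exist $\delta_0=\delta_0(F,\varepsilon)>0$ and $n_0=n_0(F,\varepsilon)$ such that every $F$-free graph $H$ on $n\ge n_0$ vertices with $e(H)\ge\bigl(1-\tfrac{1}{r}-\delta_0\bigr)\tfrac{n^2}{2}$ satisfies $d(H,T_r(n))\le\varepsilon n^2$. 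I would then take $\delta:=\delta_0$ and keep the same threshold $n_0$.

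Now let $G$ be an $F$-free graph on $n\ge n_0$ vertices with $\delta(G)\ge\bigl(1-\tfrac{1}{r}-\delta\bigr)n$. Summing the degrees, $2e(G)=\sum_{v\in V(G)}d_G(v)\ge n\,\delta(G)\ge\bigl(1-\tfrac{1}{r}-\delta\bigr)n^2$, so $e(G)\ge\bigl(1-\tfrac{1}{r}-\delta_0\bigr)\tfrac{n^2}{2}$. Applying Theorem~\ref{thm12} to $G$ immediately gives $d(G,T_r(n))\le\varepsilon n^2$, which is the assertion.

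I do not anticipate any real obstacle: the whole content is the inequality $2e(G)\ge n\,\delta(G)$, which converts the minimum-degree hypothesis into an edge-count hypothesis with no loss in the leading term, exactly as the Rayleigh bound $2e(G)/n\le\lambda(G)$ lets the spectral stability theorem (Theorem~\ref{thm14}) imply the edge version, and as Theorem~\ref{degturan} was deduced from Tur\'{a}n's theorem. The only bookkeeping point worth flagging is the notational clash between the parameter $\delta$ in the hypothesis $\delta(G)\ge(1-\tfrac{1}{r}-\delta)n$ and the symbol $\delta(\cdot)$ for minimum degree; in writing the argument I would rename the stability constant (say $\delta_0$) to keep this unambiguous. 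If desired, the degree version of the Erd\H{o}s--Stone--Simonovits theorem stated just above follows in the same manner from Theorem~\ref{thm11}, combining $2e(G)/n\ge\delta(G)$ for the lower bound with the trivial $\delta(G)\le 2e(G)/n$ and $2e(G)/n\le\bigl(1-\tfrac{1}{r}+o(1)\bigr)n$ for the upper bound.
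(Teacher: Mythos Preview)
Your proposal is correct and matches the paper's approach exactly: the paper states that this result (together with the degree version of Erd\H{o}s--Stone--Simonovits) ``can be given as a direct consequence of Theorems~\ref{thm11} and~\ref{thm12}'' without writing out details, and the argument you supply---convert $\delta(G)\ge(1-\tfrac{1}{r}-\delta)n$ into $e(G)\ge(1-\tfrac{1}{r}-\delta)\tfrac{n^2}{2}$ via $2e(G)\ge n\,\delta(G)$ and then invoke Theorem~\ref{thm12}---is precisely what is meant. Your remark about renaming the stability constant to avoid the $\delta$/$\delta(G)$ clash is sensible.
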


Applying the degree stability theorem 
and the techniques of the proof in Subsection \ref{Sec3.2}, 
we can similarly prove the following corresponding theorem 
for all color-critical graphs.  

\begin{theorem} 
If $F$ is a  graph with a critical edge and $\chi (F)=r+1$ where $r\ge 2$,  
then there exists an $n_0=n_0(F)$ 
such that  
\[    \mathrm{ex}_{\delta}(n,F)= \delta(T_r(n))  \]
 holds for all $n\ge n_0$, and 
the unique extremal graph is the Tur\'{a}n graph $T_r(n)$. 
\end{theorem}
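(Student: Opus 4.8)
The plan is to mimic the proof of Theorem~\ref{thmcri} given above for the usual spectral radius, replacing every use of the spectral stability theorem by the degree stability theorem just proved, and every use of the spectral Tur\'an theorem by the degree Tur\'an theorem (Theorem~\ref{degturan}). First I would set up the contradiction hypothesis: suppose $F$ is color-critical with $\chi(F)=r+1$, and $G$ is an $n$-vertex $F$-free graph with $\delta(G)\ge\delta(T_r(n))$; the goal is to show $G=T_r(n)$. Fix a small constant $\varepsilon>0$. Since $\delta(G)\ge \delta(T_r(n))=n-\lceil n/r\rceil \ge (1-\frac1r-\varepsilon)n$ for large $n$, the hypothesis of the degree stability theorem is met directly --- and this is a genuine simplification over the spectral case, because we do not need the preliminary ``successively-vertex-deletion'' argument (Condition (a)/(b)) used for $\lambda(G)$: the minimum degree condition is automatically hereditary-friendly in the relevant sense. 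So the degree stability theorem immediately gives an $r$-partition $V(G)=V_1\cup\cdots\cup V_r$ with $|V_i|=\lfloor n/r\rfloor$ or $\lceil n/r\rceil$ and $\sum_i e(G[V_i])+e(T_r(n))-e(G[V_1,\dots,V_r])\le \varepsilon n^2$.

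Next I would run exactly the bad-vertex cleanup from the proof of Theorem~\ref{thmcri}: define $B_i=\{v\in V_i : |N(v)\cap V_j|\le |V_j|-\sqrt\varepsilon n \text{ for some } j\ne i\}$, set $B=\bigcup B_i$, bound $|B|\le 2\sqrt\varepsilon n$, and put $U_i=V_i\setminus B_i$ so that $G[U_1,\dots,U_r]$ is a $3\sqrt\varepsilon$-almost complete $r$-partite subgraph. By Lemma~\ref{lemcri}(i) the $U_i$ are independent sets. Then, as before, I would reinsert the bad vertices one at a time: for a bad vertex $x$, Lemma~\ref{lemcri}(ii) supplies an index $i$ with $|N(x)\cap U_i|\le 3\sqrt\varepsilon rtn$ (where $t=|V(F)|$), and using $\delta(G)\ge (1-\frac1r-\varepsilon)n$ together with $|\bigcup_{j\ne i}U_j|\le (1-\frac1r)n+O(\sqrt\varepsilon) n$ one checks $x$ misses at most $O(\sqrt\varepsilon rt)\,n$ vertices in each $U_j$, $j\ne i$, so the new partition stays almost complete and Lemma~\ref{lemcri}(i) keeps all parts independent. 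Iterating over all of $B$, we conclude $G$ is $r$-partite, hence $K_{r+1}$-free; the degree Tur\'an theorem (Theorem~\ref{degturan}) then gives $\delta(G)\le\delta(T_r(n))$, forcing equality, and its equality case yields $G=T_r(n)$.

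One technical point to handle carefully: after reinserting bad vertices the part sizes $|U_i'|$ may drift away from $n/r$, so I should verify that the final $r$-partite graph $G$ with $\delta(G)\ge\delta(T_r(n))$ is not merely $r$-partite but balanced-complete. This is where the equality discussion in Theorem~\ref{degturan} does the work: among all $r$-partite graphs on $n$ vertices, $T_r(n)$ uniquely maximizes the minimum degree, and $\delta(G)\ge\delta(T_r(n))$ together with $G$ being $r$-partite and $K_{r+1}$-free pins $G=T_r(n)$ exactly. The main obstacle --- really the only place requiring care --- is the bookkeeping in the bad-vertex reinsertion, specifically ensuring the accumulated error (which grows by a factor of roughly $rt$ with each reinserted vertex, as in the original proof where the bound goes $3\sqrt\varepsilon rt \to (7\sqrt\varepsilon rt)\cdot rt \to \cdots$) stays $o(n)$; since $|B|\le 2\sqrt\varepsilon n$ is only a constant fraction but the number of reinsertions could be linear, one must instead reinsert bad vertices in bulk or observe (as the original proof implicitly does) that once the partition is almost complete the independence of parts is all that is needed, and a single global bound ``each vertex of $G$ has a part in which it has few neighbors'' suffices to finish. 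I would follow the original proof's phrasing verbatim here, substituting $\delta$ for $\lambda$ throughout.
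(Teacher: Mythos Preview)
Your proposal is correct and follows exactly the route the paper indicates: the paper does not spell out a proof here but simply says to apply the degree stability theorem together with ``the techniques of the proof in Subsection~\ref{Sec3.2}'', which is precisely what you do. Your observation that the successive-vertex-deletion step (Conditions (a)/(b)) becomes unnecessary in the degree setting is right and is the only real difference from the spectral argument; your concern about compounding errors in the bad-vertex reinsertion is legitimate but is inherited verbatim from the paper's own proof of Theorem~\ref{thmcri}, and the bulk-reinsertion fix you sketch resolves it cleanly.
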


\subsection{The signless Laplacian spectral radius}

Given a graph $G$, 
 the signless Laplacian matrix of $G$ is defined as 
 $Q(G)=D(G) +A(G)$, 
 where $D(G)=\mathrm{diag}(d_1,\ldots ,d_n)$ is the degree diagonal matrix and 
 $A(G)$ is the adjacency matrix. We denote by $q(G)$ the largest eigenvalue of 
 $Q(G)$. Since $Q(G)$ is a positive semidefinite matrix, 
its largest eigenvalue is actually the spectral radius. 
Hence we call $q(G)$ the signless Laplacian spectral radius of $G$.

A natural question is to extend the above-mentioned
 results on the adjacency spectral radius 
 to that of the signless Laplacian spectral radius. 
We define $\mathrm{ex}_{q}(n,F)$ 
to be the largest eigenvalue of the signless Laplacian   matrix 
in an $n$-vertex  graph that contains no copy of $F$. That is, 
\[ \mathrm{ex}_{q}(n,F):=\max \bigl\{ q(G): |G|=n~\text{and}~F\nsubseteq G \bigr\}. \]
Note that $Q(G)=D(G)-A(G) + 2A(G)$ and $D(G)-A(G)$ 
is positive semidefinite. It is known by the Weyl theorem 
for monotonicity of eigenvalues that  
$ 2\lambda (G)\le q(G)$. Thus any upper bound on $q(G)$ yields an upper bound on $\lambda(G)$.

In 2013, He, Jin and Zhang \cite{HJZ2013} proved that 
if $G$ is an $n$-vertex $K_{r+1}$-free graph, 
then $q(G)\le q(T_r(n))$. 
Moreover, the equality holds if and only if 
$G$ is a complete bipartite graph (not necessarily balanced) 
for $r=2$ or $G=T_r(n)$ for $r\ge 3$.  
In other words, we have 
\begin{equation} \label{eqTuransign}
\mathrm{ex}_{q}(n,K_{r+1}) =q(T_r(n)). 
\end{equation}
It is worth noting that the extremal graphs for the case $r=2$ 
are not unique. This phenomenon is surprisingly different from the 
extremal problem on the adjacency  spectral radius.  
Moreover, the signless Laplacian spectral Tur\'{a}n theorem (\ref{eqTuransign}) 
also implies the 
classical Tur\'{a}n theorem (\ref{eqTuran}); see \cite[Corollary 2.5]{HJZ2013}.

It is natural to consider the extremal problem for signless Laplacian radius for general graphs.  
However, the Erd\H{o}s--Stone--Simonovits type result 
and 
the Erd\H{o}s--Simonovits type stability result 
in terms of the signless Laplacian spectral radius do not hold.  

\medskip 
\noindent 
{\bf Remark.} 
{\it Let $F$ be a graph with chromatic number $\chi (F)=r+1$. 
The Erd\H{o}s--Stone type result $  \mathrm{ex}_{q} (n,F) = 
\left( 1-\frac{1}{r} \right) 2n + o(n)$ 
is not necessary to be true.  } 

\medskip 
The result is negative in the case $F=C_{2k+2}$ for every integer  $k\ge 1$. 

\begin{itemize}

\item 
First of all,  we take $F=C_4$ as a counter-example.  
When $n$ is odd, let $F_n$ 
be the friendship graph of order $n$, that is, 
$F_n=K_1\vee \frac{n-1}{2}K_2$;  
When $n$ is even, let $F_n$ be the graph obtained from 
$F_{n-1}$ by hanging an extra 
edge to its center. 
In other words, the $F_n$ can be viewed as 
a graph obtained from $K_{1,n-1}$ by adding a maximum 
matching within the independent set. Note  that 
$F_n$ is $C_4$-free. Upon computation, 
we get $q(F_n)=\frac{n+2 + \sqrt{n^2-4n +12}}{2}$ for odd $n$; and $q(F_n)= \frac{ n+1 + \sqrt{n^2 -2n+9}}{2}$ for even $n$. 
Thus we have 
$\mathrm{ex}_q(n, C_4) \ge  n + o(1)$. 
But $\chi (C_4)=2$ and $(1-\frac{1}{r})2n + o(n)=o(n)$. 

\item 
For the case  $k\ge 2$, 
let $S_{n,k}$ be the graph consisting of a clique on $k$ vertices and an independent set on $n-k$ vertices in which each vertex of the clique is adjacent to each vertex of the independent set. 
We can observe that $S_{n,k}$ does not contain $C_{2k+2}$ as a subgraph. Furthermore, 
let $S_{n,k}^+$ be the graph obtained from $S_{n,k}$ 
by adding an edge to the independent set $I_{n-k}$. 
In the language of join of graphs, we have  
$S_{n,k}^+=K_k \vee I_{n-k}^+$.  
Clearly, we can see that 
$S_{n,k}^+$  is still $C_{2k+2}$-free and 
\[  q(S_{n,k}^+) >  q(S_{n,k}) = \frac{n+2k-2+ \sqrt{(n+2k-2)^2 - 8k^2 +8k
}}{2}.   \]
Hence either the graph $S_{n,k}$ or $S_{n,k}^+$ can yield $\mathrm{ex}_q(n,C_{2k+2}) \ge n+ o(n)$. 
However we have $\chi (C_{2k+2})=2$ and $(1-\frac{1}{r})2n + o(n)=o(n)$.  \qedhere 
\end{itemize}

It is worth noting that 
Freitas, Nikiforov and Patuzzi \cite{FNP2013} showed that 
$\mathrm{ex}(n,C_4)=q(F_n)$ and 
 $F_n$ is the unique extremal graph.  
Moreover, Nikiforov and Yuan \cite{NY2015}  proved 
$\mathrm{ex}_q(n,C_{2k+2}) = q(S_{n,k}^+)$ 
for all $k\ge 2, n\ge 400 k^2$ 
and $S_{n,k}^+$ is the unique extremal graph.  
It is a problem whether the condition can be relax to $n\ge ck$ for some constant $c>0$. 
In addition, it is meaningful to determine graphs $F$  satisfying
 $\mathrm{ex}_{q} (n,F) = 
\left( 1-\frac{1}{r} \right) 2n + o(n)$. 

\medskip 

For the signless Laplacian radius, one may 
make the following remark.

\medskip 
\noindent 
{\bf Remark.} 
{\it The following statement is not true: 
For any graph $F$ with $\chi (F)=r+1$, $r\ge 2$ 
and $\varepsilon >0$, 
there exist $\delta >0$ and $n_0$ such that 
if $n\ge n_0$ and $G$ is an $F$-free graph 
on $n$ vertices with $q(G)\ge (1-\frac{1}{r} -\delta) 2n$, 
then the edit distance $d(G,T_r(n)) \le \varepsilon n^2$. }

\medskip 
Indeed, we now consider the case $F=C_{2k+1}$ 
or $F=F_{2k+1}$ for every $k\ge 2$, where $F_{2k+1}$ is defined as 
the  graph consisting of $k$ triangles 
intersecting in exactly one common vertex. 
Note that $\chi (C_{2k+1})=\chi (F_{2k+1})=3$. 
If $G$ is $C_{2k+1}$-free (or $F_{2k+1}$-free) 
with $q(G)\ge (\frac{1}{2} -o(1)) 2n$, 
then we can not get $d(G,T_2(n))=o(n^2)$. 
The reasons are stated as below. 
Recall that  $S_{n,k}$ is the graph consisting of a clique on $k$ vertices and an independent set on $n-k$ vertices in which each vertex of the clique is adjacent to each vertex of the independent set. 
Clearly, we can see that 
$S_{n,k}$  does not contain $C_{2k+1}$ and $F_{2k+1}$ as a subgraph. 
Taking $G=S_{n,k}$, we  calculate that 
$q(S_{n,k})\sim n+2k-2 
\ge (\frac{1}{2} - o(1))2n$. 
However, the fact $e(S_{n,k})={k \choose 2} + k(n-k)$, 
which together with $T_2(n)= \lfloor n^2/4 \rfloor$ 
implies $d(G,T_2(n)) \ge \Omega (n^2)$ 
for fixed integer $k$.

\medskip 

We mention here  that  for $k=1$, 
the $C_3$-free graphs attaining the maximum   signless Laplacian radius are complete bipartite graphs \cite{HJZ2013}. 
For $k\ge 2$, the $C_{2k+1}$-free graph attains the maximum 
 signless Laplacian radius is uniquely the split graph $S_{n,k}
 =K_k \vee I_{n-k}$.  This result was proved by 
 Freitas, Nikiforov and Patuzzi \cite{FNP2013} 
 for $k=2$ and $n\ge 6$, 
 and by Yuan \cite{Yuan2014}  for $k\ge 3$ and $n\ge 110k^2$. 
In addition, 
the $T_k$-free graph attains the maximum 
 signless Laplacian radius is also the split graph $S_{n,k}$. 
 This result was recently proved  by  Zhao, Huang and 
 Guo \cite{ZHG21} for $k\ge 2$ and $n\ge 3k^2-k-2$. 
It is interesting that whether these results are valid for $n\ge ck$ for some $c>0$. 

\medskip  
\noindent 
{\bf Remark.} {\it The above example for $F=C_{2k+1} $ implies that  the following statement is not true. 
If $F$ is a  graph with a critical edge and $\chi (F)=r+1$ where $r\ge 2$,  
then there exists an $n_0=n_0(F)$ 
such that   $  \mathrm{ex}_{q}(n,F)= q(T_r(n)) $ 
 holds for all $n\ge n_0$, and 
the unique extremal graph is the Tur\'{a}n graph $T_r(n)$. }

\section{Concluding remarks}

We remark that the statement of Theorem \ref{thm14} and Theorem \ref{thmcri} 
 are  simplifications of \cite[Theorem 2]{Niki2009jgt} and \cite[Theorem 2]{Niki2009ejc}, respectively. 
 Nikiforov's original stability theorems (spectral and non-spectral) are a lot more detailed 
 and contain a lot more quantitative bound. 
The purpose was to present theorems that can be used as tools in other theorems. 
It seems impossible that his original stability theorems can be deduced from the Erd\H{o}s--Simonovits theorem, or by the Regularity Lemma.

Recall that $A(G)$ and $D(G)$ are the adjacency matrix and 
degree diagonal matrix of $G$. The signless Laplacian matrix of 
$G$ is defined as $Q(G)=D(G) + A(G)$. 
It was proposed by Nikiforov  \cite{NikiMerge} 
to study the family of matrices 
$A_{\alpha}$ defined for any real $\alpha \in [0,1]$ as 
\[  A_{\alpha}(G) =\alpha D(G) + (1-\alpha )A(G). \]
In particular, we can see that $A_0(G)=A(G)$ and $2A_{1/2}(G)=Q(G)$. 
Nikiforov told us that 
he \cite[Theorem 27]{NikiMerge} proved a unified extension of both (\ref{eqeq2}) and (\ref{eqTuransign}), which states that 
for every $r\ge 2$ and every $K_{r+1}$-free graph $G$, 
if $0\le \alpha <1-\frac{1}{r}$, then $\lambda (A_{\alpha}(G)) 
< \lambda (A_{\alpha}(T_r(n)))$, unless $G=T_r(n)$; 
if $\alpha =1-\frac{1}{r}$, then $\lambda (A_{\alpha}(G)) < 
(1-\frac{1}{r})n$, unless $G$ is a complete $r$-partite graph; 
if $1-\frac{1}{r}< \lambda <1$, then $\lambda (A_{\alpha}(G)) 
< \lambda (A_{\alpha}(S_{n,r-1}))$, unless $G=S_{n,r-1}$, 
where $S_{n,k}=K_k \vee I_{n-k}$. 
From this evidence, it is possible to extend the results of our  paper 
into the $A_{\alpha}$-spectral radius in the range $ \alpha \in [0,1-\frac{1}{r})$.

\medskip 

At the end of this paper, we propose a problem related to the recent progress on stability type theorems. 
 Let $f_r(n,t)$ be the smallest number such that 
any $K_{r+1}$-free graph $G$ with $e(G)\ge e(T_r(n))-t$ 
edges can be made $r$-partite by deleting at most $f_r(n,t)$ edges. 
The Erd\H{o}s--Simonovits stability theorem implies that $f_r(n,t)=o(n^2)$ if $t=o(n^2)$. 
 Very recently, Kor\'{a}ndi, Roberts and 
Scott \cite{KRS2021} proved that $f_r(n,t)$ is witnessed by a pentagonal Tur\'{a}n graph 
if $t$ is small enough, which confirmed a conjecture of Balogh, Clristian, Lavrov, Lidick\'{y} and Pfender \cite{BCLLP2021}. 

\begin{theorem}[Kor\'{a}ndi--Roberts--Scott, 2021]
For every $r\ge 2$, there is a $\delta_r >0$ such that 
if $G$ is a $K_{r+1}$-free graph on $n$ vertices with 
$e(G)\ge e(T_r(n)) -\delta_r n^2$ edges, then there is a pentagonal 
Tur\'{a}n graph $G^*$ on $n$ vertices with $e(G^*)\ge e(G)$ 
and  $D_r(G^*) \ge D_r(G)$. 
\end{theorem}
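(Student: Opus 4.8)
The plan is to run the stability method on top of the quantitative bound already recorded as Theorem \ref{thmBal}. Since $e(G)\ge e(T_r(n))-\delta_r n^2$, Theorem \ref{thmBal} (together with Lemma \ref{lemEFR} and the Erd\H{o}s--Simonovits stability Theorem \ref{thm12}) gives an $r$-partition $V(G)=V_1\cup\cdots\cup V_r$ with $\sum_i e(G[V_i])=D_r(G)=O(\delta_r^{3/2}n^2)$; in particular the ``defect'' of $G$, namely the edges lying inside the parts, is already small, and all but $o(n)$ vertices are adjacent to all but $o(n)$ vertices of each other class. First I would run a cleaning step: discard the few vertices that have many non-neighbours in other classes or many neighbours in their own class, and reinsert them greedily into the class where they miss the fewest vertices (using the argument of Lemma \ref{lemcri}-type). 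What remains is a large core that is a complete $r$-partite graph overlaid with a sparse ``defect graph'' $B$ sitting inside the parts. The $K_{r+1}$-freeness of $G$ then forces $B$ to be very restricted: $B$ can contain no triangle inside a single class (such a triangle, completed by one vertex from each of the other $r-1$ classes, would produce a $K_{r+1}$), and more generally, after a further blow-up/quotient reduction, $B$ must be a blow-up of a \emph{triangle-free} base pattern.

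The heart of the argument is a symmetrization/local-surgery step. Taking $G$ to be extremal (or near-extremal) for $D_r$ subject to $e(G)\ge e(T_r(n))-t$, I would perform a sequence of local modifications --- moving vertices between classes, equalising degrees across a class, and rerouting internal edges --- each of which does not decrease $e(G)$ and does not decrease $D_r(G)$, driving $G$ toward the canonical form: a blow-up $C_5(a_1,\dots,a_5)$ of the pentagon joined completely to a Tur\'{a}n graph $T_{r-2}$ on the remaining vertices. This graph is $K_{r+1}$-free because $C_5$ is triangle-free, and it is far from $r$-partite precisely because a pentagon blow-up is far from bipartite. The reason $C_5$ beats every competing pattern (a longer odd cycle, the Petersen graph, or a defect spread over three or more classes) is a finite extremal computation: among triangle-free blow-up patterns, the $5$-cycle maximises the ratio of non-bipartiteness defect to edge deficiency below the surrounding complete multipartite graph, so any other odd or multi-component structure can be locally exchanged for a single pentagon of the same edge deficiency without decreasing $D_r$.

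Once $G$ is reduced to the form $C_5(a_1,\dots,a_5)\vee T_{r-2}$, the problem is finite-dimensional. Both $e$ and $D_r$ are explicit functions of the part sizes: the maximum cut of a pentagon blow-up equals its edge count minus the single cheapest ``edge weight'' $\min_i a_ia_{i+1}$ (a short case check, using that deleting one edge from $C_5$ leaves a bipartite path), so $D_r\bigl(C_5(a_1,\dots,a_5)\vee T_{r-2}\bigr)$ is read off directly. Optimising over the small parameters $a_i$ along the level set $\{e=\text{const}\}$ then produces a pentagonal Tur\'{a}n graph $G^*$ with $e(G^*)\ge e(G)$ and $D_r(G^*)\ge D_r(G)$, by monotonicity of $D_r$ along the pentagonal family. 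I expect the main obstacle to be the surgery of the second paragraph: making the local moves simultaneously monotone in $e$ and in $D_r$ is delicate, one must track the $o(\delta_r^{3/2}n^2)$ error terms from the cleaning so that they do not swamp the genuine defect $D_r(G)\asymp\delta_r^{3/2}n^2$, and --- most importantly --- one must rule out that distributing internal edges over several classes or using an unbalanced non-pentagonal internal structure beats the single $C_5$. This last point is where the specific geometry of the pentagon enters and is presumably the technical core of Kor\'{a}ndi--Roberts--Scott's proof (and the reason the conjecture of Balogh, Christian, Lavrov, Lidick\'{y} and Pfender \cite{BCLLP2021} resisted the elementary approaches that suffice for the coarser Theorems \ref{thm12} and \ref{thmBal}).
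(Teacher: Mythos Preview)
The paper does not prove this theorem. It appears in Section~4 (Concluding remarks), where it is quoted verbatim as a result of Kor\'{a}ndi, Roberts and Scott \cite{KRS2021} that confirmed the conjecture of \cite{BCLLP2021}; the paper then immediately moves on to pose the analogous spectral problem. There is therefore no ``paper's own proof'' to compare your proposal against.

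For what it is worth, your sketch is a plausible high-level roadmap of how such a result might go (stability $\to$ cleaning $\to$ symmetrization toward a pentagon blow-up $\to$ finite optimisation), and you correctly flag the genuinely hard step: showing that local surgeries can be made simultaneously monotone in $e$ and in $D_r$, and that no non-pentagonal defect pattern can beat $C_5$. But none of this is addressed, or even attempted, in the present paper; if you want to check your outline against an actual proof you will need to consult \cite{KRS2021} directly.
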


It is natural to consider the corresponding spectral problem. 

\begin{problem} 
For every $r\ge 2$, there is a $\delta_r >0$ such that 
if $G$ is a $K_{r+1}$-free graph on $n$ vertices with 
$\lambda (G)\ge \lambda (T_r(n)) -\delta_r n$, then there is a pentagonal 
Tur\'{a}n graph $G^*$ on $n$ vertices with $\lambda(G^*)\ge \lambda(G)$ 
and  $D_r(G^*) \ge D_r(G)$. 
\end{problem}

\subsection*{Acknowledgements}
This paper is dedicated to Vladimir Nikiforov 
whose beautiful works on spectral graph theory inspire the author.  
The first author would like to thank Prof. Lihua Feng,   
who introduced and encouraged him to the study of 
fascinating spectral graph theory
when he was a graduate student at  Central South University.  
Thanks also go to Prof. Vladimir Nikiforov 
for valuable comments and for pointing out references \cite{Zykov1949,Kha1977}.

\frenchspacing

\end{document}